\documentclass[12pt]{amsart}
\usepackage{amssymb}
\usepackage{amsmath,amscd}
\usepackage{wasysym}
\usepackage{amsfonts,amsmath,amsthm}
\usepackage{amsfonts,latexsym,verbatim,amscd,mathrsfs,color,array}
\usepackage[all,cmtip]{xy}
\usepackage{mathrsfs}
\usepackage{multirow}
\usepackage{graphicx}
\usepackage{amsfonts, amsthm}
\usepackage{bbm}
\usepackage[hmargin=1in,vmargin=1in]{geometry}
\usepackage{mathdots}
\usepackage{stmaryrd}
\usepackage{MnSymbol}
\usepackage{bbm}
\usepackage[hidelinks]{hyperref}
\usepackage{changepage}
\usepackage[shortlabels]{enumitem}

\allowdisplaybreaks

\def\XXint#1#2#3{{\setbox0=\hbox{$#1{#2#3}{\int}$ }
\vcenter{\hbox{$#2#3$ }}\kern-.6\wd0}}

\usepackage{accents}
\newlength{\dhatheight}

\DeclareMathSymbol{\intprod}{\mathbin}{MnSyC}{'270}
\newcommand{\LB}{\left[}
	\newcommand{\YM}{\mathcal{YM}}
\newcommand{\RB}{\right]}
\newcommand{\LA}{\left\langle}
\newcommand{\RA}{\right\rangle}

\newcommand{\N}{{\mathbb N}}

\newcommand{\R}{{\mathbb R}}
\newcommand{\Tr}{\operatorname{Tr}}

\newcommand{\SU}{{\mathrm{SU} }}

\newcommand{\eps}{{\varepsilon}}

\newcommand{\gothg}{{\mathfrak g}}

\newcommand{\End}{{\text{End}}}

\newcommand{\p}{{\partial}}

\newtheorem{thm}{Theorem}[section]
\newtheorem{lemma}[thm]{Lemma}
\newtheorem*{lemma*}{Lemma}
\newtheorem{prop}[thm]{Proposition}

\newtheorem{cor}[thm]{Corollary}

\newtheorem*{conj*}{Conjecture}

\newenvironment{claim}{\par\medskip\noindent\textit{Claim.}\space}{\par\medskip}
\newenvironment{claimproof}{\par\noindent\textit{Proof of claim.}\space}{\hfill$\diamond$\medskip\par}

\newtheoremstyle{others}% name
{3pt}%      Space above
{2pt}%      Space below
{}%         Body font
{}%         Indent amount (empty = no indent, \parindent = para indent)
{\bf}% Thm head font
{.}%        Punctuation after thm head
{.5em}%     Space after thm head: " " = normal interword space;
{}%         Thm head spec (can be left empty, meaning `normal')

\theoremstyle{others}
\newtheorem{rmk}[thm]{Remark}
\newtheorem*{rmk*}{Remark}
\newtheorem{defn}[thm]{Definition}

\newcommand{\norm}[1]{\left\Vert#1\right\Vert}
\newcommand{\abs}[1]{\left\vert#1\right\vert}
\newcommand{\set}[1]{\left\{#1\right\}}
\newcommand{\Real}{\mathbb R}

\newcommand{\ym}{\mathcal{YM}}

\def\i{\mathbf{i}}
\def\j{\mathbf{j}}
\def\k{\mathbf{k}}

\numberwithin{equation}{section}

\title[Instanton-anti-instanton configurations]{The Yang-Mills equation 
near instanton-anti-instanton configurations}
\author{Alex Waldron}
\author{Hao Yin}
\date{\today}

\begin{document}

\begin{abstract}
    We study the question of whether a sequence of non-instanton Yang-Mills connections can limit to a bubbling configuration composed only of instantons. In the case that the Uhlenbeck limit and the bubbles are of opposite charge, 
    we determine an obstruction coming from deformations of the Uhlenbeck limit. As an application, we prove that instantons are the only solutions of the $\SU(2)$ Yang-Mills equation on $\R^4$ with energy less than $4\pi^2 \left( |\kappa| + 2 \right) + \eps_\kappa,$ where $\kappa$ is the charge.
    We also prove discreteness of the energy spectrum on the trivial $\SU(2)$-bundle in the range $\LB 0, 16 \pi^2 \right).$
\end{abstract}

\maketitle

\tableofcontents

\thispagestyle{empty}

\section{Introduction}

\subsection{Background} Let $(M,g)$ be a closed, oriented Riemannian manifold of dimension four and $E \to M$ a vector bundle equipped with a metric. 
For metric-compatible connections $A$ on $E,$ we define the Yang-Mills functional
\[
	\ym(A)= \frac12 \int_M \abs{F_A}^2 dV_g.
\]
Its critical points satisfy the Yang-Mills equation
\begin{equation}\label{Yangmillseq}
D_A^* F_A = 0
\end{equation}
and are referred to as Yang-Mills connections.

Let $A_1, A_2, \ldots$ be a sequence of Yang-Mills connections on $E \to M$ with uniformly bounded Yang-Mills energy. The renowned compactness theory of Uhlenbeck \cite{uhlenbeck1982connections, uhlenbeck1982removable} provides an essentially complete picture of the limiting behavior of such a sequence in dimension four: after passing to a subsequence and changing gauges, $A_k$ converges smoothly away from a finite set of points $S = \{z_i\}_{i = 1}^m$ to a limiting Yang-Mills connection $A_\infty$ on a possibly different bundle $E_\infty \to M.$ 
Moreover, after rescaling near each point $z_i$ and applying Uhlenbeck's theorems inductively, one can obtain a ``bubble tree'' of Yang-Mills connections $\{ B_{i,j} \}_{i = 1,}^m{}_{j = 1}^{n_i}$ 
accounting for all of the concentrated energy. See \S \ref{sub:results} and Theorem \ref{thm:standardbubbletree} below for a statement of the standard bubble-tree compactness theory.

This paper is concerned with the following ``converse'' question to Uhlenbeck's theory.

\vspace{2mm}

%\begin{adjustwidth}{2mm}{2mm}
\noindent {\bf Question.} Given %$S,$ $A_\infty,$ and $\{ B_{i,j} \},$ 
a Yang-Mills connection $A_\infty$ on $E_\infty \to M,$ a finite collection of points $S = \{z_i \}_{i = 1}^m,$ and a collection of bubble Yang-Mills connections $\{ B_{i,j} \},$ %over $\R^4,$
does there exist a sequence of smooth Yang-Mills connections $A_1, A_2, \ldots$ on $E \to M$ %with uniformly bounded Yang-Mills energy 
that converges to this limit in the bubble-tree sense?
%\end{adjustwidth}

\vspace{2mm}

Our goal is to prove negative results on this question generalizing those recently obtained by the second-named author \cite{yin2023}. In other words, we will show that certain types of bubbling configurations cannot arise as Uhlenbeck limits of smooth solutions to the Yang-Mills equations.

Recall that in dimension four, the Hodge star operator decomposes the space of 2-forms into the direct sum of self-dual (SD) and anti-self-dual (ASD) forms. A connection $A$ is called (A)SD if its curvature $F_A$ is (A)SD, and in either case, $A$ is called an instanton. These are special Yang-Mills connections in the sense that their Yang-Mills energy achieves the minimum value allowed by the topology of the bundle. %In this paper we restrict our attention to situations in which all connections appearing in the bubble tree are either self-dual or anti-self-dual.

Because of their significance both in physics and in Donaldson theory, instantons have been studied intensively over the past five decades. In particular, the gluing problem for the ASD equations was the focus of the foundational work of Taubes, the main thrust of which is as follows. Supposing that $A_\infty$ and $\{B_{i,j}\}$ are all ASD, the answer to the above question is positive if either: $A_\infty$ is flat and $M$ is negative-definite and simply-connected \cite{taubes1982self}; $m$ is sufficiently large and the points $z_i$ are chosen judiciously \cite{taubes1984self}; or, $M$ has a generic metric and $A_\infty$ is not flat, with structure group $G = \SU(2)$ (the last result combines Taubes's work with the so-called Freed-Uhlenbeck Theorem, see \cite{freed2012instantons} or \cite[Th. 3.17]{donaldson1990}).

Certain negative results on the above question for the ASD equations have also appeared in Taubes's and Donaldson's work \cite{taubes1984self, donaldson1986connections}. These take the form of \emph{obstructions} to gluing together a given ASD bubble configuration to obtain a smooth ASD connection, and are necessary to establish partial compactness for certain moduli spaces of ASD instantons.

The gluing problem for the full Yang-Mills equation (\ref{Yangmillseq}) has been studied much less than the gluing problem for the ASD equation. However, another thread of Taubes's work is indirectly concerned with the above question, in that it involves analyzing neighborhoods of bubble-tree configurations from a Morse-theoretic point of view \cite{taubesframework}. Taubes's most successful results in this direction \cite{taubes1984path, taubes1989stable} required analyzing configurations in which both $A_\infty$ and the bubbles are instantons but the bubbles have the \emph{opposite} charge from $A_\infty$. %self-dual bubble forms on a bulk anti-self-dual connection;
We continue this thread in the present paper. %. Recently, the second-named author provided a formal proof that the standard ASD instanton cannot be glued together with the standard SD instanton on $S^4$ to produce a Yang-Mills connection. We continue this thread in the present paper is to generalize this obstruction  % and it is this thread that we follow in the current paper.

\subsection{Statement of results}
\label{sub:results}

To state our main obstruction, we need the following details of the standard bubbling analysis (see \S \ref{sec:bubblinganalysis}). %We restrict to the case that $S = \{x_0\}$ consists of only one point, and only one self-dual bubble $A_0$ forms at $x_0 \in M.$

Let $z_i \in S$ be a bubble point and $r_i > 0$ a sufficiently small radius. %For points $z$ near $z_0,$ we use the derivative of the exponential map $\exp_{z_0}$ to identify $T_{z}M$ with $T_{z_0} M.$ and consider
For each $x \in B_{r_i}(z_i),$ we use parallel transport along radial geodesics to identify $(T_x M, g_x)$ isometrically with $(T_{z_i}M, g_{z_i}).$ We consider the exponential map
$$\exp_{x} : T_{z_i}M \cong T_x M \to M$$
as a map from the fixed space $T_{z_i}M$ into $M$ with $\exp_x(0) = x.$ %and $(d\exp_x)_0 = \mathbbm{1}.$
%Let $r_i > 0$ be a sufficiently small radius and identify $T_{z_i}M$ isometrically with $T_{x_k} M$ using radial parallel transport.
%\begin{equation}\label{varphixklambdak}
%\varphi_{z, \lambda}(x) = \exp_{z}\left( \lambda x \right)
%\end{equation}
%as a map between open sets in $T_{z_0}M$ and $M.$

Let $x^i_k \in B_{r_i}(z_i)$ and $0 < \lambda^i_k < r_i$ be any sequence of points and scales such that $x^i_k \to z_i$ and $\lambda^i_k \searrow 0$ as $k \to \infty$, and define
\begin{equation}\label{varphikdef}
\begin{split}
    \varphi_k : B_{\frac{r_i}{\lambda_k}}(0) \subset T_{z_i} M & \to B_{2r_i}(z_i) \subset M \\
    \varphi_k(y) & = \exp_{x_k}(\lambda_k y).
    \end{split}
\end{equation}
We may pass to a subsequence such that the connections $\varphi_k^* A_k$ converge in the Uhlenbeck sense to a Yang-Mills connection on $T_{z_i} M.$ It is possible to choose $x^i_k$ and $\lambda^i_k$ corresponding to an ``outer'' bubble scale---see Theorem \ref{thm:standardbubbletree}---and we denote the corresponding (possibly flat) Uhlenbeck limit by $B_{i,1}.$ 
According to Uhlenbeck's removable singularity theorem, $B_{i,1}$ extends to a Yang-Mills connection on a bundle $E_{i,1}$ over the one-point compactification:
$$E_{i,1} \to \widehat{T_{z_i} M} \cong S^4.$$
Moreover, the converging connections $A_k$ naturally induce an isomorphism %between the fiber of $E_0$ at infinity and the fiber of $E_\infty$ at $x_0,$
$$\hat{\rho}_i : (E_{i,1})_\infty \to (E_{\infty})_{z_i}$$
which we call the ``attaching map,'' see Definition/Lemma \ref{defnlemma:attachingmap}. This procedure can be continued to obtain a full bubble tree $\{ B_{i,j} \}$ at $z_i.$ %but our primary concern is with the outermost bubble $B_{i,1}.$

Let $\iota$ denote inversion in the unit sphere in $T_{z_i}M.$ 
%which arises as a limit of parallel transportation by the connections $A_k$ along the bubbling sequence.
Its extension $\hat{\iota}$ to $\widehat{T_{z_i} M}$ is reflection across the equator, whose derivative
$$d\hat{\iota}_0 :  T_0 \widehat{T_{z_i} M} \cong T_{z_i}M \longrightarrow T_\infty \widehat{T_{z_i} M}$$
is an orientation-reversing linear isometry. Combined with $\hat{\rho}_i,$ $d \hat{\iota}_0^*$ induces a linear isometry
$$
	d\hat{\iota}_0^* \otimes \hat{\rho}_i : \Lambda^{2,+} T_\infty^* \widehat{T_{z_i} M} \otimes \End(E_{i,1})_{\infty} \longrightarrow \Lambda^{2,-} T^*_{z_i} M \otimes \End(E_\infty)_{z_i},
$$
which we will again denote by $\hat{\rho}_i.$ After applying this map, the self-dual curvature of $B_{i,1}$ at $\infty$ becomes an $\End(E_\infty)_{z_i}$-valued self-dual 2-form at $z_i.$ %with the curvature of $A_\infty$ at $z_i.$

\begin{thm}
	\label{thm:main} %Let $\{A_k\}_{k = 1}^\infty$ be an Uhlenbeck-convergent sequence of Yang-Mills connections on $(M,g),$ and 
    Assume that the Uhlenbeck limit $A_\infty$ is anti-self-dual and unobstructed, {\it i.e.} $H^{2,+}_{A_\infty}=\set{0},$ and that all bubble connections are anti-self-dual except possibly at $z_1 \in S.$
    For each $a \in {\rm Ker} D_{A_\infty}^+,$ %and $k$ sufficiently large, we must have
    %In particular, if all bubbles are ASD except possibly at one point $z_j\in S,$ then
    we must have
    	\begin{equation}
		\label{eqn:main}
		\left\langle \hat{\rho}_1 \left( F^+_{B_{1,1}}(\infty) \right), \left( D^-_{A_\infty} a \right) (z_1) \right\rangle = 0.
	\end{equation}
    A similar result holds after reversing the roles of SD and ASD forms.
\end{thm}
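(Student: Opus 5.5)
The plan is to pair the Yang--Mills equation for $A_k$ against a deformation of $A_\infty$ and pass to the limit; this is a Pohozaev/balancing-type identity. Fix $a \in {\rm Ker}\, D^+_{A_\infty}$. Since $A_\infty$ is ASD and unobstructed, $a$ is an infinitesimal ASD deformation; after adding $D_{A_\infty}\xi$ for a suitable section $\xi$ (which does not change $D^+_{A_\infty} a = 0$ and changes $D^-_{A_\infty}a$ only by $[F_{A_\infty},\xi]$), we may assume $a$ is in Coulomb gauge. We then transplant $a$ to the bundle $E$. Away from small balls $B_{r}(z_i)$ the Uhlenbeck gauges identify $E$ with $E_\infty$, and we set $a_k = \chi\, a$ with a cutoff $\chi$ vanishing near $S$. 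The key identity comes from the Yang--Mills equation $D_{A_k}^*F_{A_k}=0$. We first use
\[
0 = \int_M \langle D_{A_k}^* F_{A_k}, a_k\rangle = \int_M \langle F_{A_k}, D_{A_k} a_k\rangle ,
\]
and then, since $\int\langle F^+,D^+a\rangle-\int\langle F^-,D^-a\rangle$ is a boundary/Chern--Weil-type term, we rewrite this as $2\int \langle F^+_{A_k}, D^+_{A_k} a_k\rangle = 0$ modulo terms that are topological (they vanish for compactly supported variations). In effect the equation says $D_{A_k}^* F^+_{A_k}=0$, so $\int\langle F^+_{A_k}, D^+_{A_k}a_k\rangle=0$ for every compactly supported $a_k$.

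We split this integral into a bulk region and neck/bubble regions. In the bulk, $F^+_{A_k}\to F^+_{A_\infty}=0$ smoothly and $D^+a=0$. The contributions are therefore of the size of $F^+_{A_k}$ in the bulk, and this must be compared with what comes from the bubble at $z_1$. The mechanism is that the SD curvature of the bubble $B_{1,1}$ decays like $\lambda_k^2\,|F^+_{B_{1,1}}(\infty)|\,|x|^{-4}$ outside the bubble. This decay is obtained by pulling back through $\iota$, where the curvature is regular at $\infty$, and it is exactly where $\hat\rho_1$ enters. The tail seen by $A_k$ at scale $|x|\sim r$ is then $F^+_{A_k}\approx \lambda_k^2\, \hat\rho_1(F^+_{B_{1,1}}(\infty))$ times a harmonic ``dipole'' self-dual form. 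Integrating by parts on the annulus $\{\lambda_k R<|x-z_1|<r\}$ against $D^-_{A_\infty}a$ (or doing it on the complement) gives a boundary term
\[
c\,\lambda_k^2\,\left\langle \hat\rho_1\!\left(F^+_{B_{1,1}}(\infty)\right), (D^-_{A_\infty}a)(z_1)\right\rangle + o(\lambda_k^2),
\]
with $c\neq0$ a universal constant from the explicit $|x|^{-4}$ Green's-function kernel. At the other points $z_i$ the bubbles are ASD, so their $F^+$ vanishes and they contribute only higher-order errors.

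The proof would proceed in the following order. \emph{(i)} Use the bubble tree and neck analysis of \S\ref{sec:bubblinganalysis} to get the sharp expansion of $F^+_{A_k}$ on the neck region at $z_1$. The expansion must have leading term $\lambda_k^2$ times the pushed-forward tail with error $o(\lambda_k^2)$, together with a bound $F^+_{A_k}=o(\lambda_k^2)$ globally away from $z_1$. \emph{(ii)} Use unobstructedness, i.e.\ surjectivity of $D^+_{A_\infty}$, to correct $a_k$. Concretely, we solve $D^+_{A_k}$-type equations so that the linearized perturbation of $A_\infty$ absorbs the bulk error $F^+_{A_k}$ outside the balls. This ensures that the bulk contributes only $o(\lambda_k^2)$. \emph{(iii)} Compute the boundary term on the sphere $|x-z_1|=r$ and let $r\to 0$ after $k\to\infty$. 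Dividing by $\lambda_k^2$ then yields (\ref{eqn:main}). The statement with SD and ASD reversed follows by reversing orientation.

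The main obstacle is step \emph{(i)}/\emph{(ii)}. We need to show that the global self-dual part of $F_{A_k}$, in particular in the bulk and near the other bubble points, is genuinely $o(\lambda_k^2)$, so that it cannot compete with the dipole term. The first approach to try is to show that $F^+_{A_k}$ satisfies the elliptic equation $D_{A_k}^{+}{}^{*}F^+_{A_k}=0$, which makes $D_{A_k}D_{A_k}^{+*}F^+=0$ a Laplace-type equation. The source of $F^+_{A_k}$ is concentrated at the bubble point $z_1$. Using $H^{2,+}_{A_\infty}=0$ (no harmonic SD forms) and uniform invertibility of the SD Laplacian of $A_k$ away from bubbles, we would then get $F^+_{A_k}= \lambda_k^2 G(\cdot,z_1)\hat\rho_1(F^+_{B_{1,1}}(\infty)) + o(\lambda_k^2)$, where $G$ is the Green's operator. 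Pairing this expansion with $D^-a$ then gives the result directly.
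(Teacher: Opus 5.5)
Your architecture matches the paper's. Pairing $D^*_{A_k}F^+_{A_k}=0$ with a cut-off $a$ is Lemma \ref{lemma:F+stokes} on $\Omega_R=M\setminus\bigcup_i B_R(z_i)$. The $\iota^*$-dipole in the neck expansion at $z_1$ produces $\frac{\pi^2}{2}\lambda_k^2\LA\hat{\rho}_1(F^+_{B_{1,1}}(\infty)),(D^-_{A_\infty}a)(z_1)\RA$. Everything else must be $o(\lambda_k^2)$ when you let $k\to\infty$ and then $R\to0$.

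The gap is in the step you flag as the main obstacle, and part of what you write there is wrong. The bound $F^+_{A_k}=o(\lambda_k^2)$ away from $z_1$ is false whenever (\ref{eqn:main}) has content. On $\partial B_r(z_1)$ the dipole alone has size $\lambda_k^2r^{-4}$. Moreover, as your own Green's-function ansatz says, $\lambda_k^{-2}\rho_k(F^+_{A_k})$ subconverges on $M\setminus S$ to a nonzero harmonic self-dual form. What is true, and sufficient, is $\eps'_k=O(\lambda_k^2)$. Given that bound, no correction of $a$ is needed:
\begin{itemize}
\item the bulk term is $o(\lambda_k^2)$, because $\rho_k(F^+_{A_k})=O(\lambda_k^2)$ on $\Omega_R$ for each fixed $R$ while $D^+_{\rho_k(A_k)}a=(\rho_k(A_k)-A_\infty)\#a\to0$ uniformly there;
\item the ASD bubble points contribute $O(\eps'_kR^3)$;
\item the $\eps'_k$-dependent neck terms at $z_1$ ($\delta'_k$ and part of $\eta^1_k$ in Theorem \ref{thm:asymptotics}$a$) also contribute $O(\eps'_kR^3)$.
\end{itemize}
Your step (ii) cannot replace this bound. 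Even if the bulk integral were made to vanish exactly, those $O(\eps'_kR^3)$ boundary terms would swamp the $\lambda_k^2$ dipole unless $\eps'_k/\lambda_k^2$ is known to be bounded. Unobstructedness has to be used in its dual form: injectivity with a spectral gap of $D^*_{A_\infty}$ on $\Omega^{2,+}$. It serves to bound $F^+_{A_k}$ itself, not to solve for a corrected $a_k$.

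The missing argument is the paper's Theorem \ref{thm:asymptotics}$b$, a contradiction argument that your ``uniform invertibility'' heuristic does not supply by itself, because it hinges on the neck decay.
\begin{enumerate}
\item Suppose $\eps'_k/\lambda_k^2\to\infty$.
\item Theorem \ref{thm:F+estimate} on the neck at $z_1$ gives $r^2|F^+_{A_k}|\le C(\lambda_k^2r^{-2}+\eps'_k(r/r_1)^2)$. Hence $|F^+_{A_k}|\le C\eps'_k/r_1^2$ whenever $r^4\ge r_1^2\lambda_k^2/\eps'_k$, a range whose inner radius tends to $0$.
\item At the ASD bubble points, Lemma \ref{lemma:splitepsreg} gives the same bound on all of $B_{r_i/2}(z_i)$.
\item Cut off at a fixed small radius $R$ and set $\xi_k=\varphi_R\,\rho_k(F^+_{A_k})$. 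Then $\norm{\xi_k}_{L^2}\ge\eps'_k$, but $\norm{D^*_{A_\infty}\xi_k}_{L^2}\le C\left(R+\norm{\rho_k(A_k)-A_\infty}_{L^\infty(\Omega_R)}\right)\eps'_k$.
\item This contradicts the positive lower bound on $\norm{D^*_{A_\infty}\xi}_{L^2}/\norm{\xi}_{L^2}$ implied by $H^{2,+}_{A_\infty}=0$.
\end{enumerate}
The key point is that the cutoff error on the support of $\nabla\varphi_R$ is small relative to $\eps'_k$ only because the bubble's self-dual tail has already decayed below $\eps'_k$ there.

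A minor point: your Coulomb-gauge reduction is not a harmless normalization. It changes $(D^-_{A_\infty}a)(z_1)$ by $[F_{A_\infty}(z_1),\xi(z_1)]$, and so changes the quantity in (\ref{eqn:main}). Since you never use it, drop it.
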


Theorem \ref{thm:main} can be compared with recent work of Ozuch \cite{ozuch2024integrability} and LeBrun-Ozuch \cite{lebrunozuch2026desingularizations}, where deformations of the limiting orbifold lead to obstructions to the existence of smooth bubbling sequences of noncollapsed 4D Einstein metrics.

An important application of the obstruction described in Theorem \ref{thm:main} is to rule out certain bubbling configurations for sequences of Yang-Mills connections on the 4-sphere. From \cite{yin2023}, %a set of necessary conditions was established. As a corollary,
we know that there is no sequence of $\SU(2)$-Yang-Mills connections on $S^4$ that splits into a 1-instanton and a 1-anti-instanton. (A further comparison between that obstruction and \eqref{eqn:main} is given in Section \ref{sec:discussion}.) Using the obstruction (\ref{eqn:main}), we are able to prove the following generalization.

\begin{thm}
	\label{thm:next}
	Let $\set{A_k}_{k=1}^\infty$ be an Uhlenbeck-convergent sequence of $\SU(2)$-Yang-Mills connections on $S^4$. Let $\mathcal{B} = \{A_\infty, B_{i,j} \},$ 
    and suppose that $\mathcal{B}$ contains at least two non-flat connections. It is impossible that $\mathcal{B}$ contains a single SD instanton of unit charge while the rest are ASD.
\end{thm}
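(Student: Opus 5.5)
We argue by contradiction. Suppose $\mathcal{B}$ contains exactly one SD instanton $B_*$ of unit charge, all other non-flat members are ASD, and there are at least two non-flat members. The plan is to arrange the picture so that Theorem \ref{thm:main} applies, and then to exhibit an element $a \in {\rm Ker}\, D^+_{A_\infty}$ for which the pairing (\ref{eqn:main}) is nonzero.

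The first step is a normalization by conformal invariance. The Yang-Mills equation and (anti-)self-duality on $S^4$ are conformally invariant. After composing with a suitable sequence of conformal transformations of $S^4$, we may therefore move the bubble tree so that the Uhlenbeck limit $A_\infty$ is ASD and non-flat, and $B_*$ appears as an \emph{outermost} bubble $B_{1,1}$ at some point $z_1$. The natural choice is to rescale at the scale of the SD bubble, or at a neck adjacent to it. One point needs care here. If $B_*$ sits deeper in the tree, or if $A_\infty$ itself is the SD connection, we re-select the base scale; by the bubble-tree structure (Theorem \ref{thm:standardbubbletree}) we may take the base to be a non-flat ASD node adjacent to $B_*$ in the tree, with $B_*$ as a first-generation child. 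This is possible because at least two non-flat connections are present. Energy identity and neck analysis ensure that the re-rooted configuration is still an Uhlenbeck limit of the conformally transformed $A_k$. All remaining bubbles are ASD, so the hypotheses of Theorem \ref{thm:main} hold, provided $A_\infty$ is unobstructed. That is true: ASD $\SU(2)$ connections on $S^4$ satisfy $H^{2,+}_{A_\infty}=0$ because $S^4$ has positive scalar curvature and vanishing $W^+$ (Atiyah--Hitchin--Singer), so the Weitzenb\"ock formula kills $\ker (D_A^+)^*$.

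The second step computes the two sides of (\ref{eqn:main}). Since $B_{1,1}$ is the charge-one SD instanton, it is the standard BPST instanton up to conformal symmetry. Its curvature at $\infty$ is, up to a nonzero constant, the identity isometry $\Lambda^{2,+}\to \mathfrak{su}(2)$. After applying $\hat\rho_1$, it becomes a nonzero element $\Phi\in \Lambda^{2,-}_{z_1}\otimes \mathfrak{su}(2)_{z_1}$ of full rank, i.e.\ an isomorphism $\Lambda^{2,-}_{z_1}\cong \mathfrak{su}(2)$. It therefore suffices to find $a\in {\rm Ker}\, D^+_{A_\infty}$ with $\langle \Phi, (D^-_{A_\infty}a)(z_1)\rangle\ne 0$. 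We take infinitesimal deformations coming from the conformal group, $a = \iota_X F_{A_\infty}$ for conformal Killing fields $X$. These lie in ${\rm Ker}\, D^+_{A_\infty}$ (in a Coulomb-type sense, after adding a gauge term if needed), and $D^-_{A_\infty} a = \mathcal{L}_X F_{A_\infty}$ up to gauge. Taking $X$ a dilation or translation centered appropriately gives $D^-a(z_1)$ equal to a nonzero multiple of $F_{A_\infty}(z_1)$ or of $\nabla_v F_{A_\infty}(z_1)$. The same family also includes the deformations of the full ADHM moduli space of $A_\infty$. The span of the values $(D^-a)(z_1)$ over all $a$ is then a nonzero $\SU(2)$-stable subspace of $\Lambda^{2,-}\otimes\mathfrak{su}(2)$.

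The main obstacle is showing that this span is not orthogonal to the specific $\Phi$ determined by the unknown attaching map $\hat\rho_1$. I would first try to show that the span is all of $\Lambda^{2,-}\otimes \mathfrak{su}(2)$. The intended route uses the ADHM moduli space: it has dimension $8|\kappa|-3$, and by moving the point $z_1$ under conformal transformations we see that the evaluation map $a\mapsto (D^-_{A_\infty}a)(z_1)$ is surjective at generic $z_1$. Alternatively, we would use only dilation centered at $z_1$. This gives $\mathcal{L}_X F = 2F_{A_\infty}(z_1)+(\text{gauge})$, and we would then need $\langle\Phi,F_{A_\infty}(z_1)\rangle\neq0$. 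Since the attaching map is arbitrary in $\SU(2)$, this inner product could a priori vanish; this is why the full evaluation surjectivity is the key step. If surjectivity fails at the given $z_1$, one uses the freedom in $\hat\rho_1$ to compose with gauge rotations that are realized by deformations of $A_\infty$ itself (gauge-equivariance of the moduli space). Either way the resulting nonzero pairing contradicts (\ref{eqn:main}), proving the theorem.
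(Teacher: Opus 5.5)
Your outline has the paper's overall strategy: reduce to the obstruction \eqref{eqn:main} at a non-flat ASD root, note $H^{2,+}_{A_\infty}=0$ on $S^4$, and find a deformation pairing nontrivially with the image $\Phi$ of $F^+_{B_*}(\infty)$. However, both of your steps have genuine gaps.

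\textbf{Step 1 fails when ghost bubbles are present.} You claim that after a conformal change of base, $B_*$ becomes the outermost bubble $B_{1,1}$ over a non-flat ASD root. A flat bubble has at least two concentration points plus its parent neck, and a conformal change of base only re-roots the same underlying tree. Suppose, for instance, that $B_*$ and an ASD bubble concentrate at one point of $A_\infty$ at mutual distance much larger than their own scales. Then every non-flat ASD node is separated from $B_*$ by a ghost, whatever root you choose. The outermost bubble at the relevant point is flat, so $F^+_{B_{1,1}}(\infty)=0$ and Theorem \ref{thm:main} gives no information.

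This is exactly why the paper does not quote Theorem \ref{thm:main}. Instead it:
\begin{enumerate}
\item uses that $B_+$ is the \emph{only} SD bubble to obtain the global bound \eqref{eqn:LinftildeAi};
\item expands $F^+$ on every neck of the chain from $B_+$ through the ghosts to $A_\infty$, using Corollary \ref{cor:EuclideanF+est};
\item proves (Lemmas \ref{lem:ghostbubble} and \ref{lem:djinf}) that the rescaled leading coefficient passes through each ghost bubble as a nonvanishing standard curvature tensor.
\end{enumerate}
The boundary term at $z_1$ in Lemma \ref{lemma:F+stokes} is then computed by $\beta_*(d^{(l+1)}_\infty)$.

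\textbf{The non-orthogonality step, which is the heart of the theorem, is not established, and your sketched routes fail.} You never show that the values $(D^-_{A_\infty}a)(z_1)$ are not all orthogonal to $\Phi$.
\begin{itemize}
\item \emph{Surjectivity is false.} We have ${\rm Ker}\,D^+_{A_\infty}={\rm Im}\,D_{A_\infty}\oplus\mathcal{H}^1$ with $\dim\mathcal{H}^1=8\kappa-3$, and $(D^-_{A_\infty}D_{A_\infty}\sigma)(z_1)=[F_{A_\infty}(z_1),\sigma(z_1)]$. So the evaluation map has rank at most $8\kappa$, which is less than $9=\dim\Lambda^{2,-}\otimes\mathfrak{su}(2)$ when $\kappa=1$. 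In that case the image is exactly $V_{F_{A_\infty}(z_1)}$, where $V_\xi=\{c\,\xi+ad_\sigma\xi\}$. ``Generic $z_1$'' does not help, because $z_1$ is fixed by the sequence.
\item \emph{There is no symmetry to exploit.} The image is not $\SU(2)$-invariant, since gauge directions only add $ad_\sigma F_{A_\infty}(z_1)$. Also $\hat\rho_1$ is determined by the sequence, so there is no freedom to rotate it.
\item \emph{Dilation plus gauge is not enough.} It yields only $V_{F_{A_\infty}(z_1)}$. For $\kappa\geq2$ the tensor $F_{A_\infty}(z_1)$ need not be standard, and then nothing prevents $\Phi\perp V_{F_{A_\infty}(z_1)}$.
\item \emph{Full rank of $\Phi$ would not suffice.} In matrix terms, a symmetric traceless invertible matrix such as ${\rm diag}(1,1,-2)$ is orthogonal to $V_{I}$. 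You need $\Phi$ to be a multiple of an isometry.
\end{itemize}

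The missing ideas are the following two.
\begin{enumerate}
\item[(i)] The ADHM formula. For $A_\infty=\iota^*A_{(B,\lambda)}$ with $z_1$ at the origin, it gives $F(0)=2\sum_j\lambda_j(e_1\otimes\i+e_2\otimes\j+e_3\otimes\k)\lambda_j^*$, independently of $B$. Varying the single entry $\lambda_\kappa\mapsto\lambda_\kappa+t\sigma$, with $B_t$ supplied by the implicit function theorem (Lemma \ref{lem:ai}), therefore produces all of $V_\xi$ for the standard tensor $\xi=\lambda_\kappa(e_1\otimes\i+e_2\otimes\j+e_3\otimes\k)\lambda_\kappa^*$.
\item[(ii)] Lemma \ref{lem:linear}. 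A nonzero standard tensor is never orthogonal to $V_\xi$ for a nonzero standard $\xi$, because an orthogonal $3\times 3$ matrix cannot be both symmetric and traceless.
\end{enumerate}
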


In addition to the obstruction of Theorem \ref{thm:main}, the proof of Theorem \ref{thm:next} needs the explicit formula for instantons on $S^4$ as given in Atiyah's book \cite{atiyahbook} for a good choice of the deformation $a$. Moreover, the proof must deal with a %deals with complicated
general bubble tree, which may involve so-called ``ghost'' bubbles.

In \cite{gursky2018}, the authors proved that: on an $\SU(2)$-bundle of charge $\kappa$ on $S^4$, any Yang-Mills connection with energy strictly less than $4\pi^2 \left( |\kappa| + 2 \right)$ is an instanton (see \cite[Remark 2.1]{dayapremawaldron} for a comparison of normalizations). As a corollary of Theorem \ref{thm:next}, we can obtain the following improvement.

%\begin{thm}
%	\label{thm:main2}
%    Let $\{A_k\}_{k = 1}^\infty$ be an Uhlenbeck-convergent sequence of Yang-Mills $\SU(2)$-connections on an oriented 4-manifold $(M,g).$ If all bubbles at a certain bubble point $x_0 \in S$ %there exists $r > 0$ such that $$\lim_{k \to \infty} \| F_k^+ \|_{L^2(B_r(x_0))} < \eps_0.$$
%    are anti-self-dual, then we must have $F_{A_\infty}^+(x_0) = 0.$ %i.e., the self-dual curvature of $A_\infty$ must vanish at $x_0.$
%\end{thm}
%
%
%
%

\begin{cor}\label{cor:epskappa}
    On an $\SU(2)$-bundle of charge $\kappa$ on the 4-sphere, there exists $\eps_\kappa > 0$ such that any Yang-Mills connection with energy less than $4\pi^2\left( |\kappa| + 2 \right) + \eps_\kappa$ is an instanton.
\end{cor}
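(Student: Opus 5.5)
Corollary \ref{cor:epskappa} follows from Theorem \ref{thm:next} by a contradiction-and-compactness argument. Suppose no such $\eps_\kappa$ exists. Then there is a sequence $A_k$ of non-instanton $\SU(2)$-Yang-Mills connections on the charge-$\kappa$ bundle $E \to S^4$ with
$$\ym(A_k) \searrow 4\pi^2(|\kappa|+2),$$
or at least with $\limsup \ym(A_k) \le 4\pi^2(|\kappa|+2)$. By \cite{gursky2018}, no non-instanton Yang-Mills connection has energy strictly below $4\pi^2(|\kappa|+2)$. Moreover, the energy spectrum of $\SU(2)$-Yang-Mills connections on $S^4$ is expected to be closed. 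Hence it suffices to rule out a sequence of non-instantons whose energies converge to $4\pi^2(|\kappa|+2)$ from above. This includes the possibility of a single non-instanton at exactly this energy, which is handled by the same limiting argument or directly.

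\textbf{Step 1: pass to a bubble-tree limit.} Apply the standard bubble-tree compactness (Theorem \ref{thm:standardbubbletree}) to obtain $\mathcal{B}=\{A_\infty,B_{i,j}\}$ with energy identity
$$\ym(A_\infty)+\sum \ym(B_{i,j}) = 4\pi^2(|\kappa|+2),$$
and charges summing to $\kappa$. Each member is an $\SU(2)$-Yang-Mills connection on $S^4$ (for $A_\infty$, on $S^4$ itself; for the bubbles, after removable singularities). Applying \cite{gursky2018} to each member of charge $\kappa'$, it is either an instanton with energy $4\pi^2|\kappa'|$ or has energy $\ge 4\pi^2(|\kappa'|+2)$. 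Write the total as $4\pi^2\sum|\kappa_\alpha|$ plus $8\pi^2$ times the number of non-instanton members.

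\textbf{Step 2: enumerate the possible configurations.} Since $\sum|\kappa_\alpha|\ge|\kappa|$ with equality iff all charges share a sign, the energy budget $4\pi^2(|\kappa|+2)$ leaves exactly the following options.
\begin{enumerate}[(a)]
\item Exactly one non-instanton member, all members' charges of one sign, and $\sum|\kappa_\alpha|=|\kappa|$.
\item All members are instantons with $\sum|\kappa_\alpha|=|\kappa|+2$. Since the charges sum to $\kappa$, the opposite-sign charges total exactly $1$ in absolute value. So exactly one unit-charge instanton of the opposite orientation appears, and all other non-flat members share the sign of $\kappa$.
\end{enumerate}

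\textbf{Step 3: rule out each case.} In case (a), all non-flat bubbles and the limit other than the non-instanton member would have to carry no energy. The non-instanton member is then the only nonflat member, so convergence is smooth with no bubbling, or the single bubble carries everything. In either case some $A_k$ converges smoothly (up to gauge/conformal rescaling) to an isolated non-instanton at energy exactly $4\pi^2(|\kappa|+2)$. Here I would use Łojasiewicz/analyticity or simply note that this does not contradict anything, so the argument must instead exclude energies in $(4\pi^2(|\kappa|+2), 4\pi^2(|\kappa|+2)+\eps)$. Those cannot converge to such a limit only if energy levels near a critical point are locally constant. That holds by the Łojasiewicz inequality: Yang-Mills connections in a small neighborhood of a critical point have the same energy. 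So case (a) forces $\ym(A_k)=4\pi^2(|\kappa|+2)$ for large $k$, and one gets $\eps_\kappa$ as required. In case (b), the hypotheses of Theorem \ref{thm:next} hold exactly: at least two non-flat connections (one SD unit instanton plus at least the remaining charge, or, if $\kappa$ and the rest vanish, still the total charge forces another nonflat member, since a lone unit instanton has charge $\pm1 \neq$ the required total), with a single opposite-orientation instanton of unit charge. Theorem \ref{thm:next} forbids this, after swapping orientation if $\kappa>0$.

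\textbf{Main obstacle.} The delicate step is the statement "all but finitely many energies, or a gap above $4\pi^2(|\kappa|+2)$". Precisely, case (a) must be handled without knowing discreteness of the spectrum. I would use that the limiting configuration in case (a) has no bubbles, so $A_k\to A_\infty$ smoothly modulo gauge and conformal group. Compactness modulo the conformal group needs care, since bubbling of the whole energy at one point is conformally equivalent to smooth convergence. Then the Łojasiewicz–Simon inequality gives $\ym(A_k)=\ym(A_\infty)$ for large $k$. This contradicts the energies being strictly above $4\pi^2(|\kappa|+2)$ and below $4\pi^2(|\kappa|+2)+1/k$.
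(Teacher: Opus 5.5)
Your case split is correct, and your treatment of case (b) matches the paper's endgame: it is exactly the hypothesis of Theorem \ref{thm:next}. The genuine gap is case (a), in two places.

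First, the claim that the non-instanton member $X$ must be the only non-flat member is false once $|\kappa|\ge 1$. The energy budget only forces $|\kappa_X|+\sum_\beta|\kappa_\beta|=|\kappa|$ with all charges of one sign. That still allows other instanton members carrying charge of the same sign; for example, with $\kappa=2$, a non-instanton of charge $1$ and energy $12\pi^2$ together with a unit ASD bubble. In that situation there is no smooth convergence, so the \L ojasiewicz inequality near a critical point cannot be used.

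Second, even in the no-bubbling subcase, \L ojasiewicz only gives $\YM(A_k)=4\pi^2(|\kappa|+2)$ for large $k$. It does not exclude a non-instanton sitting \emph{exactly} at the threshold energy, and the corollary must also exclude that. You concede this yourself (``this does not contradict anything''). Your claim that the threshold case is handled ``directly'' is never substantiated, and the remark that the spectrum is ``expected to be closed'' proves nothing.

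The missing ingredient is the rigidity (equality) case of the gap theorem of \cite{gursky2018}, which is what the paper uses. Normalize $\kappa\ge 0$, so that by Chern--Weil $\YM=4\pi^2\kappa+\|F^+\|^2$. In case (a), $X$ then has $\kappa_X\ge 0$ and $\|F^+_X\|^2=8\pi^2$ exactly.

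Equality in the gap theorem makes $F^+_X$ parallel after a conformal change. Because $\Lambda^{2,+}_{S^4}$ is indecomposable (with $p_1=4$), $F^+_X$ then induces an isomorphism $\Lambda^{2,+}_{S^4}\cong\gothg_{E_X}$. This gives $c_2(E_X)=-1$, hence $\YM(X)=4\pi^2$, so $X$ is the unit SD instanton. That contradicts both $\kappa_X\ge 0$ and $X$ being a non-instanton.

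So case (a) is empty. This disposes of the bubbling issue and the threshold-attainment issue at once, and no \L ojasiewicz argument is needed.

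The paper in fact reaches case (b) without enumerating cases. From $\|F^+_{A_k}\|^2\to 8\pi^2$ and the gap theorem, all self-dual energy sits on one member with $\|F^+\|^2=8\pi^2$. Rigidity makes that member the unit SD instanton, and every other member is ASD, so Theorem \ref{thm:next} applies.
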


In the case of the trivial bundle, by combining Theorem \ref{thm:next} with the standard \L ojasiewicz inequality for a Yang-Mills connection, we can also prove the following result.

\begin{cor}\label{cor:discreteness}
The set of critical values less than $16\pi^2$ of the Yang-Mills functional on the trivial $\SU(2)$-bundle over $S^4$ is discrete. % up to $16\pi^2.$
\end{cor}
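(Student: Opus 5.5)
The plan is to argue by contradiction, using Uhlenbeck bubble-tree compactness (Theorem \ref{thm:standardbubbletree}) and an energy count to reduce to two situations. In the first, the sequence converges smoothly modulo gauge and conformal transformations, and the \L ojasiewicz inequality applies. In the second, the bubble tree consists of one unit-charge SD instanton and ASD instantons, which Theorem \ref{thm:next} forbids.

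Suppose the critical values below $16\pi^2$ are not discrete. Then there are Yang-Mills connections $A_k$ on the trivial $\SU(2)$-bundle with pairwise distinct energies $c_k = \ym(A_k) \to c < 16\pi^2$. Pass to an Uhlenbeck-convergent subsequence with bubble tree $\mathcal{B} = \{A_\infty, B_{i,j}\}$. By the energy identity, $c$ is the sum of the energies of the components. Since the bundle is trivial, the charges $\kappa_j$ of the components sum to zero. I use two lower bounds on the energy of a nonflat component of charge $\kappa_j$:
\begin{itemize}
\item it is at least $4\pi^2|\kappa_j|$, with equality exactly for instantons;
\item if the component is not an instanton, it is at least $4\pi^2(|\kappa_j|+2)$ by \cite{gursky2018}; in particular at least $8\pi^2$ when $\kappa_j = 0$.
\end{itemize}

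\emph{Case 1: $\mathcal{B}$ contains at least two nonflat components.}
\begin{itemize}
\item If all nonflat components are instantons, then $\sum_j |\kappa_j| < 4$ and $\sum_j \kappa_j = 0$, so $\sum_j|\kappa_j| = 2$. The nonflat components are therefore exactly one unit-charge SD instanton and one unit-charge ASD instanton. After reversing orientation if needed, this is precisely the configuration excluded by Theorem \ref{thm:next}.
\item Suppose instead some component is a non-instanton with charge $\kappa_j$. The remaining components must carry total charge $-\kappa_j$, so together they contribute energy at least $4\pi^2|\kappa_j|$. The total is then at least $4\pi^2(2|\kappa_j| + 2)$, which forces $\kappa_j = 0$. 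That component then contributes at least $8\pi^2$. Every other nonflat component either is another charge-zero non-instanton (at least $8\pi^2$), or is an instanton of nonzero charge. In the latter situation the charges must cancel, so at least two such instantons appear, contributing at least $8\pi^2$. Either way the total is at least $16\pi^2$, a contradiction.
\end{itemize}

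\emph{Case 2: at most one nonflat component $C$.} Then $C$ carries all the energy, and $c = \ym(C)$.
\begin{itemize}
\item If $C = A_\infty$ and there is no energy concentration, then $A_k \to A_\infty$ smoothly modulo gauge.
\item If $C$ is a bubble, I precompose $A_k$ with the conformal dilations of $S^4$ defining that bubble scale; this does not change energies. All other components, including $A_\infty$ and any ghost bubbles, are flat and carry zero energy. So there is no energy loss at the antipodal point or in necks, and by removable singularities the rescaled connections converge smoothly, modulo gauge, to $C$ on all of $S^4$.
\end{itemize}
In either sub-case we have Yang-Mills connections converging smoothly to a Yang-Mills connection $C$ on the same bundle. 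The \L ojasiewicz inequality at $C$ gives $|\ym(A) - \ym(C)|^{1-\theta} \le K \|D_A^*F_A\|$ for $A$ near $C$. Hence every Yang-Mills connection in a small enough neighborhood of $C$ has energy exactly $\ym(C)$. This contradicts the $c_k$ being pairwise distinct, which completes the argument.

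The main obstacle is the smooth-convergence claim in Case 2. One must check that ghost bubbles and neck regions with zero energy cannot obstruct smooth convergence after conformal rescaling. I would handle this with the standard neck analysis: $\eps$-regularity on annuli with small energy, together with conformal invariance of the Yang-Mills functional on $S^4$. The energy bookkeeping in Case 1 relies only on the cited gap \cite{gursky2018} and on Theorem \ref{thm:next}.
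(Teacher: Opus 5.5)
Your argument is correct. It uses the same ingredients as the paper: the energy identity, charge conservation, the gap theorem of \cite{gursky2018}, Theorem \ref{thm:next}, and the \L ojasiewicz inequality. The case decomposition, however, is different.

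The paper first applies conformal transformations so that $A_\infty$ is the most energetic element of $\mathcal{B}$, and then cases on the number of bubbles.
\begin{itemize}
\item With no bubbles, \L ojasiewicz applies.
\item With two bubbles, each bubble has energy below $8\pi^2$ and so is a unit instanton. This forces $A_\infty$ to be a unit instanton too, leaving an odd number of unit charges that must sum to zero.
\item With one bubble, $A_\infty$ lives on a bundle of charge $\pm 1$ with energy below $12\pi^2$. Then \cite{gursky2018} makes it a unit instanton of opposite charge, and Theorem \ref{thm:next} applies.
\end{itemize}

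You instead case on the number of nonflat components. You apply the bound $4\pi^2(|\kappa_j|+2)$ to every non-instanton component with its own charge, which excludes non-instantons directly and reduces the multi-component case to a $\pm1$ pair. You never need to re-center the tree there, because Theorem \ref{thm:next} is stated for an arbitrary bubble tree.

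The paper's normalization makes the bookkeeping very short. It leaves implicit the conformal re-centering and the treatment of ghost bubbles and of a flat $A_\infty$. Your version makes these explicit and only rescales in the single-component case.

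In that single-component case, the step you flag as the main obstacle is immediate and needs no separate neck analysis. After rescaling, the energy identity gives total energy tending to $\ym(C)$. The limit already captures all of this away from any finite set, so no point can carry the $\eps_0$ of concentrated energy required in Theorem \ref{thm:standardbubbletree}. Uhlenbeck's theorem then gives smooth convergence modulo gauge on all of $S^4$. For the same reason, when $C = A_\infty$ the absence of energy concentration is automatic, not an extra sub-case hypothesis.
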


We note that discreteness of the energy spectrum of the Yang-Mills functional on any compact Riemannian 4-manifold (as on $S^4/\R^4$) is a well-known open problem. We also note that the 2-equivariant Yang-Mills connection constructed by Sibner, Sibner, and Uhlenbeck \cite{sibner1989} has energy between $8\pi^2$ and $16\pi^2.$ %so the part of the spectrum described in Corollary \ref{cor:discreteness} is nonempty.
It is reasonable to conjecture that this is the only nonzero critical value below $16\pi^2$ on the trivial $\SU(2)$-bundle. %the Sibner-Sibner-Uhlenbeck example is the only Yang-Mills connection on the trivial $\SU(2)$-bundle with energy below $16\pi^2.$ %with energy energy less than $16\pi^2$ on the trivial $\SU(2)$-bundle on $S^4.$ %the Sibner-Uhlenbeck example is the only Yang-Mills connection with energy less than $16\pi^2$ on the trivial $\SU(2)$-bundle on $S^4.$

%\footnote{What's wrong with the constant (comparison with \cite{gursky2018}.} Our result is therefore an improvement.

\vspace{2mm}

\subsection{Outline of the argument}

To see the obstruction (\ref{eqn:main}), we need a precise expansion for the self-dual curvature $F^+_{A_k}$ in the neck region. %\cmt{See some theorem below for a precise statement of this expansion.}
Such an expansion under the assumption that the metric is conformally flat is proved in \cite{yin2023}. In Section \ref{sec:expansion}, we present an easier proof and remove this technical assumption. Our method also yields an independent proof of the basic curvature-decay estimates for Yang-Mills fields in dimension four (Theorem \ref{thm:F+estimate}).
%Given $z_i \in S,$ let $r_i > 0$ be such that $B_{2r_i}(z_i)$ contains no other points of $S$ and $\|F_{A_\infty}\|_{L^2(B_{2r_i}(z_i))} < \eps_0.$
In the formulas (\ref{asymptotics:a}-\ref{asymptotics:b}) of the next theorem, we use radial gauge for $A_\infty$ %(or any well-controlled gauge)
to identify $\left. E_\infty \right|_{B_{2r_i}(z_i)} \cong B_{2r_i}(z_i) \times (E_\infty)_{z_i}.$ %for each $x \in B_{2r_i}(z_i).$

\begin{thm}\label{thm:asymptotics} Let $A_k$ be any bubble-tree convergent sequence of Yang-Mills connections on $M,$ with bubbling set $S = \{z_i\}_{i = 1}^m.$ For each $i,$ let $x^i_k \to z_i$ and $\lambda^i_k \searrow 0$ correspond to an outer scale (see Theorem \ref{thm:standardbubbletree}$c$), and let
$$\eps^i_k = \|F^+_{A_k} \|_{L^2\left( B_{\lambda^i_k} \setminus B_{\lambda^i_k/2}(x^i_k)\right)}, \qquad \eps'_k = \|F^+_{A_k}\|_{L^2\left(M \setminus \cup_i B_{r_i/2}(x^i_k)\right)},$$
where $r_i > 0$ is sufficiently small.
There exist bundle maps $\rho_k : \left. E \to E_\infty \right|_{M \setminus \cup_i B_{\lambda^i_k}(x^i_k)}$ %defined over $M \setminus \cup_i B_{\lambda^i_k}(x^i_k),$
as follows. 

\vspace{2mm}

\noindent ($a$) %In a radial gauge for $E_\infty$ centered at $z_i$ (with respect to $A_\infty$),
Let $B_{i,1}$ denote the outermost bubble at $z_i.$
On $ B_{r_i} \setminus B_{\lambda^i_k}(0) \subset T_{z_i}M,$ we have
\begin{equation}\label{asymptotics:a}
\exp_{x^i_k}^*\rho_k \left( F^+_{A_k} \right) = (\lambda^i_k)^2 \iota^* \! \hat{\rho}_i \left(F^+_{B_{i,1}}(\infty) + \delta^i_k \right) + F^+_{A_\infty}(z_i) + \delta'^i_k + \eta^i_k.
\end{equation}
Here $\delta^i_k$ and $\delta'^i_k$ are constant self-dual 2-forms on $T_\infty \widehat{T_{z_i} M}$ and $T_{z_i} M,$ respectively, with $|\delta^i_k| \leq C \eps^i_k,$ $|\delta'^i_k| \leq C \eps'_k,$ and $\delta^i_k, \delta'^i_k \to 0$ as $k \to \infty,$ while
$$|\eta^i_k(x) | \leq C \left( \frac{\eps^i_k (\lambda^i_k)^3}{|x|^5} +  \frac{\eps'_k |x|}{r_i} + \frac{\left( \eps^i_k + \eps'_k \right) (\lambda^i_k)^2}{|x|^2 r_i^2} \right) .$$

\vspace{2mm}
%\red{In the above equation, the meanings of $F^+_{B_{i,1}}(\infty)$ and $\delta^i_k$ are redundant; only one needs to be retained.}

\noindent ($b$) Suppose that $F^+_{A_\infty} \equiv 0$ and $H^{2,+}_{A_\infty} = 0.$ For each $k,$ let
$$\lambda_k = \max_{\{i \mid F^+_{B_{i,j}} \not\equiv 0 \text{ for some } j\} }\lambda^i_k.$$
%ignoring the ASD bubble points. %and all bubble connections are anti-self-dual except at $z_1,$ where all bubbles (including $B_1$) are self-dual. % {\color{red} can we allow multiple points in this result?}.
We then have $|\delta'^i_k| + \eps'_k = O(\lambda_k^2),$ so that
\begin{equation}\label{asymptotics:b}
\left| \exp_{x^i_k}^* \rho_k \left( F^+_{A_k} \right)  - (\lambda^i_k )^2 \iota^* \! \hat{\rho}_i \left( F^+_{B_{i,1}}(\infty) \right)\right| \leq C(\lambda^i_k)^2 \left( \frac{|\delta^i_k|}{|x|^4} +  \frac{ \eps^i_k \lambda^i_k}{|x|^5}  + \frac{\eps^i_k + K \lambda_k^2}{|x|^2 r_i^2} \right) + K \lambda_k^2
\end{equation}
on $B_{r_i} \setminus B_{\lambda^i_k}(0) \subset T_{z_i}M.$ Here $K$ depends on the bubbling data.

Similar results hold after reversing the roles of SD and ASD curvature.
\end{thm}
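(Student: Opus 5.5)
The key identity is that for a Yang--Mills connection the self-dual curvature $F^+ = F^+_{A_k}$ satisfies $D_A^* F^+ = 0$ and $d_A F^+ = 0$, since $D_A^* F = 0$ and the Bianchi identity together give $d_A F^+ = d_A F^- = 0$. Hence $F^+$ solves the elliptic equation $(D_A^+)^* F^+ = 0$. Equivalently, $\Delta_A F^+ = \nabla_A^*\nabla_A F^+ + \tfrac{s}{3}F^+ - 2W^+(F^+) + [F^+,F^+]$-type terms $=0$ by the Weitzenb\"ock formula. My plan is to treat $F^+_{A_k}$ in the neck region
\[
\Omega_k = B_{r_i}\setminus B_{\lambda_k}(x_k)
\]
as an approximately $A_\infty$-harmonic (or, at small scales, Euclidean-harmonic) self-dual form. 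Its expansion would then come from the two boundary contributions: the inner one, governed by the bubble $B_{i,1}$, and the outer one, governed by $A_\infty$.

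I will carry the argument out in the following order.

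First, using Uhlenbeck's $\eps$-regularity and the standard energy decay on necks, I fix the bundle maps $\rho_k$. On $M\setminus\cup_i B_{r_i/2}$ these come from the smooth convergence $A_k\to A_\infty$. On the neck they come from radial gauge centered at $x_k$, matched with radial gauge for $A_\infty$ at $z_i$; in the limit this produces the attaching map $\hat\rho_i$. In these gauges, $\exp_{x_k}^*A_k - d$ is small on annuli, with curvature bound $|F_{A_k}|\le C\eps/|x|^2$.

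Second, I write the equation for $\omega = \rho_k(F^+_{A_k})$ on $\Omega_k$ as
\[
(d^*+d)\omega = [a_k,\omega] + (\text{metric error})\,\nabla\omega ,
\]
with $|a_k|\lesssim \eps/|x|$ and metric error $O(|x|^2)$. I then decompose $\omega$ into Fourier modes of self-dual harmonic forms on $\mathbb R^4\setminus 0$. The two lowest modes are:
\begin{itemize}
\item the constant mode $c$, which must match $F^+_{A_\infty}(z_i)+\delta'$;
\item the decaying mode $\lambda^2\iota^*\hat\rho(\cdot)/|x|^4$, which is the inversion pullback of a constant form.
\end{itemize}
Its coefficient is identified with $F^+_{B_{i,1}}(\infty)+\delta$ by rescaling via $\varphi_k$ and using that $\varphi_k^*A_k\to B_{i,1}$ smoothly near $|y|=1$. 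The key point is conformal invariance: on $\widehat{T_{z_i}M}$, a smooth self-dual harmonic form near $\infty$ pulls back under $\iota$ to exactly this $|x|^{-4}$ profile.

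Third, I estimate the higher modes by a three-annulus/ODE argument in $\log|x|$. The next modes decay like $\lambda^3|x|^{-5}$ from inside and grow like $|x|/r$ from outside, with sizes controlled by $\eps^i_k$ and $\eps'_k$ through $L^2$ bounds on the boundary annuli. The nonlinear and metric errors, which are quadratic or of size $O(|x|^2)\cdot$(solution), are absorbed by iteration, and they yield the cross term $(\eps+\eps')\lambda^2/(|x|^2r^2)$. This gives part (a).

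For part (b), $F^+_{A_\infty}\equiv 0$. The outer energy $\eps'_k$ is then governed by the linearization at $A_\infty$: off the bubble points, $\omega_k=\rho_k F^+_{A_k}$ satisfies $(D^+_{A_\infty})^*\omega_k = O(|A_k-A_\infty|\,|\omega_k|)$. Since $H^{2,+}_{A_\infty}=0$, the operator $D^+_{A_\infty}(D^+_{A_\infty})^*$ is invertible. I would therefore represent $\omega_k$ away from the points as a Green's-operator image of its boundary sources at the non-ASD bubble points. These sources have size $O((\lambda^i_k)^2)$ by part (a), and the nonlinear error is small by the convergence. The ASD bubble points contribute nothing at leading order, because their $F^+_{B}$ vanishes and their neck inner coefficient is then only $\delta$-sized, which feeds back quadratically. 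This gives $\eps'_k+|\delta'|=O(\lambda_k^2)$, and substituting into (a) yields (b).

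I expect the main obstacle to be the third step combined with the bootstrap in (b). The difficulty is obtaining sharp mode-by-mode decay on a neck of unbounded conformal length with non-flat metric and non-trivial gauge. I would first try a weighted $L^2$ three-annulus lemma for the Euclidean operator $d+d^*$ on self-dual forms, exploiting the spectral gap between the indicial roots $0,-4$ and $1,-5$. I would then absorb the perturbations via a weighted Schauder estimate.
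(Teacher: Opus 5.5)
Your architecture matches the paper's: the neck is treated on the cylinder, and $F^+$ is split into the $\pm m$ eigenmodes of $*_\theta d_\theta$ (your indicial roots $0,-4$ and $1,-5$ are the modes $\Theta_{\pm2},\Theta_{\pm3}$). Each mode is estimated by an ODE in $t=\log|x|$, and the $\Theta_{-2}$ coefficient is identified through the rescaled convergence to $B_{i,1}$. Part (b) rests on injectivity of $D^*_{A_\infty}$ on self-dual forms applied to a cut-off of $\rho_k(F^+_{A_k})$; the paper phrases your Green's-operator step as a contradiction with the positive Rayleigh quotient. (The closed part of the spherical 1-form is not a harmonic mode; the paper bounds it directly from the constraint $d_\theta^*\alpha_{cl}=\tilde\rho$.) However, several steps as you wrote them would not go through.

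\emph{The gauge bound in (a).} The bound $|a_k|\lesssim\eps/|x|$ is too weak. On the cylinder it is a non-decaying $O(\eps)$ potential, so $[a_k,\omega]$ is a linear perturbation of size $\eps|\omega|$. That is not integrable along a neck of length $\sim\log(r_i/\lambda^i_k)$, so the mode ODEs only give rates $2-C\eps$. You then get neither the exact $(\lambda^i_k)^2|x|^{-4}$ leading term nor convergent coefficients. You need the radial gauge of Lemma~\ref{lemma:radialgauge} fed by the neck decay estimate (Theorem~\ref{thm:F+estimate}). The paper proves that estimate by a continuity argument in $T$, run simultaneously for $F^\pm$. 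The resulting bound is $|x|\,|a_k|\lesssim\eps\bigl((\lambda^i_k/|x|)^2+(|x|/r_i)^2\bigr)$, which makes the quadratic term integrable and yields the cross term $(\eps^i_k+\eps'_k)(\lambda^i_k)^2/(|x|^2r_i^2)$.

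\emph{The sources in (b).} The boundary sources are not $O((\lambda^i_k)^2)$. On the cutoff annulus $R\le|x|\le 2R$, part (a) also contains $\delta'^i_k$ and the $\eps'_k|x|/r_i$ term, which are of size $\eps'_k$ — the very quantity you are estimating. The argument closes only because these enter $\|D^*_{A_\infty}(\chi\omega_k)\|_{L^2}$ with a factor $R$, since $|\nabla\chi|\lesssim R^{-1}$ on a region of volume $\sim R^4$. That gives $\eps'_k\le C(R+o_k(1))\eps'_k+CR^{-3}\lambda_k^2$, and you must then take $R$ small to absorb.

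\emph{The ASD bubble points in (b).} This is the more serious gap: "feeds back quadratically" is not an argument. At these points $\lambda^i_k$ is not bounded by $\lambda_k$ and may be much larger. Part (a) only bounds the inner term there by $(\lambda^i_k)^2\eps^i_k R^{-4}$, and nothing you have relates $\eps^i_k$ to $\eps'_k$ or $\lambda_k$. The missing ingredient is split $\eps$-regularity (Lemma~\ref{lemma:splitepsreg}). Since every bubble at such a point is ASD, $\|F^+_{A_k}\|_{L^2(B_{r_i}(z_i))}<\eps_0$ for large $k$. The Bochner formula for $F^+$ alone, which you wrote down but never used, then gives $\sup_{B_{r_i/2}(z_i)}|F^+_{A_k}|\lesssim\eps'_k/r_i^2$ on the whole ball. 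So these points contribute only $O(R\,\eps'_k)$ to the cutoff error and are absorbed together with the $\delta'$ terms.
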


Note that for the constant \emph{anti}-self-dual $2$-form $\mu = \hat{\rho}_i(F^+_{B_{i,1}}(\infty) + \delta_i)$ on $T_{z_i}M \cong \R^4,$ $\iota^* \mu$ is the $O(r^{-4})$ homogeneous, harmonic, \emph{self}-dual 2-form with the same restriction to the unit sphere. Hence, the theorem identifies the leading-order term in $F^+_{A_k}$ as proportional to $\frac{\lambda_k^2}{r^4}.$ In the situation of ($b$), this term dominates all the way out to a finite scale. The proof of Theorem \ref{thm:main} can then be accomplished using the following elementary formula.

\begin{lemma}\label{lemma:F+stokes}
	Let $A$ be a connection on $E$ and $a \in \Omega^1\left( \gothg_E \right).$ %a 1-form valued in the associated Lie-algebra bundle. %Given $r > 0,$ let $S^3_r(p) $ be the geodesic sphere of radius $r$ about $p \in M.$
	For $\Omega \Subset M,$ we have
	\begin{equation}\label{F+stokes:identity}
		\int_{\partial \Omega} \Tr F_A^+ \wedge a = \int_{\Omega} \frac12 \LA D_A^* F_A , a \RA - \LA F_A^+, D_A^+ a \RA \, dV. 
	\end{equation}
\end{lemma}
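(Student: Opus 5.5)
This is an integration-by-parts identity. I would apply Stokes' theorem to the $3$-form $\Tr(F_A^+\wedge a)$, then use the Bianchi identity and the (anti-)self-duality of the wedge pairing to identify the terms. Sign and normalization conventions follow the paper: $\langle \cdot,\cdot\rangle$ on $\gothg_E$ is the inner product for which $\Tr(X\wedge *Y) = -\langle X, Y\rangle\, dV$ (for $\mathfrak{su}$-valued forms $\Tr(XY)=-\langle X,Y\rangle$), possibly up to the factor implicit in the $\frac12$.

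First compute $d\,\Tr(F_A^+\wedge a) = \Tr(D_A F_A^+\wedge a) + \Tr(F_A^+\wedge D_A a)$. This uses $d\Tr = \Tr D_A$ on $\End$-valued forms and the sign $(-1)^2=+1$, since $F_A^+$ is a $2$-form.

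For the second term, I use that for self-dual $\omega$ we have $\omega\wedge\eta = \omega\wedge\eta^+ = \omega\wedge *\eta^+$. Hence
\[
\Tr(F_A^+\wedge D_A a) = \Tr(F_A^+\wedge * D_A^+ a) = -\langle F_A^+, D_A^+ a\rangle\, dV.
\]

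For the first term, the Bianchi identity $D_A F_A = 0$ gives $D_A F_A^+ = -D_A F_A^- = \tfrac12 D_A(F_A - *F_A)\cdot(-1)+\dots$; more cleanly, $D_A F_A^+ = \tfrac12 D_A(F_A + *F_A) = \tfrac12 D_A * F_A$. Now $D_A * F_A = \pm * D_A^* F_A$, since $D_A^* = -*D_A*$ on $2$-forms in dimension four. Therefore
\[
\Tr(D_A F_A^+\wedge a) = \tfrac12\Tr(*D_A^*F_A\wedge a) = \pm\tfrac12\langle D_A^*F_A, a\rangle\, dV,
\]
using the symmetry $\alpha\wedge *\beta = \beta\wedge*\alpha$ for $1$-forms, together with the graded commutativity of $\Tr$ for the $3$-form and $1$-form.

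Integrating over $\Omega$ and applying Stokes then gives \eqref{F+stokes:identity}. The only real obstacle is bookkeeping the signs: the sign of $D_A^*$, the sign of the trace pairing, and the order in the wedge. I would fix these by checking against the Yang-Mills case, where the first term must vanish, and against the abelian case with $a = d f$-type test forms. Nothing deeper than Bianchi and the algebra of $\Lambda^{2,\pm}$ is needed.
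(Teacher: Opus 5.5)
Your proposal is correct and follows the paper's proof essentially step for step: the Leibniz rule for $d\,\Tr(F_A^+\wedge a)$, the Bianchi identity in the form $2D_AF_A^+ = D_A * F_A = *D_A^*F_A$, graded commutativity of the trace to turn the first term into $\frac12\langle D_A^*F_A, a\rangle\,dV$, self-duality for the second term, and then Stokes. The sign you left as $\pm$ is $+$, and it follows by direct computation: since $D_A^* = -*D_A*$ and $** = -1$ on $3$-forms in dimension four, one gets $D_A*F_A = *D_A^*F_A$, and then $\Tr(*D_A^*F_A\wedge a) = -\Tr(a\wedge *D_A^*F_A) = \langle a, D_A^*F_A\rangle\,dV$; your Yang--Mills sanity check could not have fixed it, since that term vanishes there.
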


\begin{proof} 
	We calculate
	\begin{equation}\label{prestokes}
		\begin{split}
			d (\Tr F_A^+ \wedge a) & = \Tr \left( D_A F_A^+ \wedge a + F_A^+ \wedge D_A a\right).
		\end{split}
	\end{equation}
    By the Bianchi identity, we have
$$2D_AF_A^+ = D_A \left( F_A + *F_A \right) = D_A * F_A = *D_A^*F_A.$$
This gives
\begin{equation*}
\begin{split}
\Tr D_A F^+_A \wedge a & = - \Tr a \wedge D_A F^+_A \\
& = - \frac12 \Tr a \wedge * D_A^*F_A \\
& = \frac12 \LA a, D_A^*F_A \RA.
\end{split}
\end{equation*}
Returning to (\ref{prestokes}), we get
    \begin{equation*}
		\begin{split}
			d (\Tr F_A^+ \wedge a) 
					     & = \frac12 \LA D_A^*F_A , a \RA \, dV - \LA F_A^+ , D_A^+ a \RA dV.
		\end{split}
	\end{equation*}
	The result now follows by integrating over $\Omega$ and applying Stokes's Theorem.
\end{proof}

For the proof of Theorem \ref{thm:main} (\S \ref{sec:proof}), we take $\Omega = M\setminus \cup_i B(z_i, R)$ with $R$ sufficiently small, $A=A_k,$ and $a \in {\rm Ker}(D_{A_\infty}^+).$ It turns out that the LHS of (\ref{F+stokes:identity}) is $O(\lambda_k^2)$ a priori while the RHS is $o(\lambda_k^2).$ The leading-order term on the LHS must therefore vanish, implying (\ref{eqn:main}). %Please note that $a$ is a form taking value in $\mathfrak g_{E_m}$ and it is regarded as a form of $E$ after some natural identification. 

%Note that %while $\hat{\rho} \left(F^+_{A_0}(\infty) + \delta_k \right)$ is a constant anti-self-dual 2-form on $T_{x_0} \R^4,$
%$\iota^* \hat{\rho} \left(F^+_{A_0}(\infty) + \delta_k \right)$ is $O(r^{-4}).$

\subsection{Acknowledgments} The authors thank Tom Parker for comments on the manuscript. The proof of Lemma \ref{lem:ai} was suggested to H. Y. by the AI model: Deepseek-v3.2-speciale. H. Y. is supported by NSFC-12431003.

%In Section \ref{sec:proof}, we prove the theorem. It consists of two parts. In the first part, we make use of the assumption $H^{2,+}_{A_m}=\set{0}$ to obtain extra estimate for $F_{A_k}$, which implies the vanishing of the limit of the right hand side of \eqref{eqn:keyformula}. In the second part, we compute the limit of the left hand side of \eqref{eqn:keyformula}.

\vspace{10mm}

\section{Refined curvature estimate on a cylinder}\label{sec:expansion}

%In this section, we establish some estimates on Yang-Mills fields on cylinders that are key ingredients in the proof of our main theorems.

\subsection{Statement of results}

Let
$$C = C_T = \LB -T, T \RB \times S^3.$$
%denote a cylinder.
We endow $C$ with a metric of the form
\begin{equation}\label{formofg}
\bar{g} = dt^2 + h(t),
\end{equation}
where $h(t)$ is a $t$-dependent Riemannian metric on $S^3$ and we assume
\begin{equation}\label{hdecayassumption}
        |h(t) - h_{round}| + |\p_t h| + |\nabla_\theta h| \leq \alpha_1 e^{-\kappa_0 \left( t + T \right)} + \alpha_2 e^{\kappa_0 \left( t - T \right)}.
\end{equation}
Here $0 < \kappa_0 \leq 2$ and $\alpha_1, \alpha_2 \leq \alpha_0,$ a small constant which will depend only on $\kappa_0.$
Note that by Gauss's Lemma, any smooth metric on a ball takes this form (with $\kappa_0 = 2$ and $\alpha_1 = 0$) after making the conformal change
$$\bar{g} = \frac{g}{r^2}$$
and setting $t=\log r.$
Since the Yang-Mills equation in dimension four is conformally invariant, we are free to work on the cylinder instead of on an annulus. We shall also exploit the fact that $A$ is Yang-Mills if and only if the self-dual and anti-self-dual parts of the curvature are individually harmonic, %:$$D_A^*F_A = 0 \iff D_A^* F^+_A = 0 \iff D_A F^+_A = 0 \iff D_A^* F^-_A = 0 \iff D_A F_A^- = 0.$$
as a consequence of the Bianchi identity.

We will first give a simple proof of the following decay estimate which essentially dates back to Uhlenbeck \cite{uhlenbeck1982removable}; see also Donaldson-Kronheimer \cite[Prop. 7.3.3]{donaldson1990}, R\aa de \cite{rade1993decay}, and Groisser-Parker \cite{groisser1997sharp}. We follow the convention that constants appearing in statements of theorems are universal unless stated otherwise, while a constant appearing in the proof of the corresponding theorem is allowed to have the same dependence as in the statement. Constants will be allowed to increase from line to line during a proof.

\begin{thm}\label{thm:F+estimate}
Let $A$ be any Yang-Mills connection on $C_{T + 1}$ with
$$\sup_{-T \leq t \leq T} \YM \left( \LB t-1, t+1 \RB \times S^3, A \right) \leq \eps^2 < \eps^2_0,$$
where $\eps_0 > 0 $ is a universal constant.
Let 
$$\eps_1 = \| F_A^+ \|_{L^2\left(\LB -T - 1, -T +1 \RB \times S^3 \right)}, \qquad \eps_2 = \| F_A^+ \|_{L^2\left(\LB T - 1, T +1 \RB \times S^3 \right)}.$$
We have
    \begin{equation}\label{F+estimate:estimate}
        |F_A^+(t)| \leq C_{\eqref{F+estimate:estimate}} \left( \eps_1 e^{-2(t + T)} + \eps_2 e^{2(t - T)} \right).
    \end{equation}
%Here $\eps_0 > 0$ is universal and $C_{\eqref{F+estimate:estimate}}$ only depends on $\kappa_0.$
A similar result holds for the anti-self-dual curvature. %$F^-.$
\end{thm}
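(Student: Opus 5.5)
The plan is to derive the pointwise bound from an exponential-decay estimate for the $L^2$ norm of $F^+_A$ on unit slices, and then pass from $L^2$ to pointwise control by $\eps$-regularity. Since $A$ is Yang-Mills, $F^+ := F^+_A$ satisfies $D_A F^+ = 0 = D_A^* F^+$. Its Weitzenböck formula on self-dual forms reads
\[
\nabla_A^*\nabla_A F^+ + \tfrac{S}{3}F^+ - 2W^+(F^+) + [F^+, F^+] = 0,
\]
with the quadratic term bounded by $|F_A|\,|F^+|$. On the cylinder, \eqref{hdecayassumption} makes the metric close to the round product $dt^2 + h_{round}$. The small-energy hypothesis, combined with Uhlenbeck's $\eps$-regularity applied on the slabs $[t-1,t+1]\times S^3$, gives $|F_A| + |\nabla_A F_A| \le C\eps$ pointwise on $C_T$. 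So the Yang-Mills system is a small perturbation of the linear system governing harmonic self-dual forms on the product cylinder.

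The key step is a three-annulus, or convexity, argument. Set
\[
f(t) = \tfrac12\int_{\{t\}\times S^3} |F^+|^2 .
\]
Write $F^+ = dt\wedge \phi + *_3\phi$ in temporal gauge, with $\phi(t)$ an $\mathfrak g_E$-valued 1-form on $S^3$. The equations $D_AF^+ = 0 = D_A^*F^+$ then become a first-order evolution $\nabla_t\phi = L_t\phi$, where $L_t = \pm *_3 D_{A(t)}$ plus lower-order error terms; on coclosed parts this is the curl operator. For the round $S^3$ with the trivial connection, the relevant operator on 1-forms coupled to $D_{A(t)}$ has spectrum bounded away from $(-2,2)$. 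Indeed, $*d$ on coclosed 1-forms on the unit $S^3$ has eigenvalues $\pm(k+1)$ with $k\ge 1$, and the exact part is killed by the constraint $D^*_{A(t)}\phi = 0$ coming from $D_A^*F^+ = 0$ up to curvature terms. Since $A(t)$ is $C\eps$-close to flat, and flat connections on $S^3$ are trivial, the gap persists up to $O(\eps+\alpha_0)$.

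Differentiating twice, one then obtains the differential inequality
\[
f'' \ge (4 - \delta)\, f
\]
with $\delta = O(\eps_0 + \alpha_0)$. Tracking the $(t\pm T)$-dependence of the metric error, one can in fact sharpen this to $f'' \ge 4f - (\alpha_1 e^{-\kappa_0(t+T)} + \alpha_2 e^{\kappa_0(t-T)})f$. A maximum-principle comparison with the barrier $\eps_1^2 e^{-4(t+T)} + \eps_2^2 e^{4(t-T)}$ then gives
\[
f(t) \le C\left(\eps_1^2 e^{-4(t+T)} + \eps_2^2 e^{4(t-T)}\right).
\]
The boundary values of $f$ at $t=\pm T$ are controlled by $\eps_1, \eps_2$ after averaging $f$ over the end slabs.

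Finally, I would pass from the slice $L^2$ bound to the pointwise bound \eqref{F+estimate:estimate} by local elliptic estimates for the linear-in-$F^+$ equation $\Delta_A F^+ + (\text{bounded})\cdot F^+ = 0$ on each unit slab, for instance by Moser iteration on $|F^+|$, which satisfies $\Delta|F^+| \ge -C|F^+|$. The main obstacle is obtaining the sharp rate $2$, rather than $2-\delta$, which is what the perturbed convexity naturally yields. Getting exactly $e^{\mp 2(t\pm T)}$ requires that the perturbation coefficient be integrable in $t$ (which is where the exponential decay in \eqref{hdecayassumption} enters) and that the nonlinear error $|F_A||F^+|$ also be integrable. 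I would first prove decay with rate $2-\delta$. Then I would bootstrap, using that the improved decay of $F_A$ on the relevant pieces makes $\int|F_A|\,dt$ finite, and so obtain a Grönwall factor bounded uniformly in $T$. For this bootstrap one also needs the analogous decay of $F^-$ (the same argument applied to $F^-$) to control $|F_A|$. The ASD statement is symmetric.
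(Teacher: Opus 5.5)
\textbf{Gap 1: the convexity constant.} Your convexity route is a legitimate alternative to the paper's argument, but its central inequality is off by a factor of four. As written, the comparison step does not give rate $2$.

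\begin{itemize}
\item With $f=\frac12\int_{\{t\}\times S^3}|F^+|^2=\|\phi(t)\|^2$, take the product case in temporal gauge, where $\partial_t\phi=L\phi$ with $L={*}D_{A(t)}$ self-adjoint. Then $f''=4\|L\phi\|^2+2\langle\phi,{*}[\partial_tA\wedge\phi]\rangle$.
\item The spectral gap $|\mathrm{spec}\,L|\geq 2$ gives $\|L\phi\|^2\geq 4\|\phi\|^2$, hence $f''\geq(16-\delta)f$, not $(4-\delta)f$.
\item With your inequality $f''\geq(4-\delta)f$, the barrier $g=\eps_1^2e^{-4(t+T)}+\eps_2^2e^{4(t-T)}$ satisfies $g''=16g>(4-\delta)g$. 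So $g$ is a subsolution, not a supersolution.
\item The maximum principle then only yields $f\lesssim \eps_1^2e^{-\sqrt{4-\delta}\,(t+T)}+\eps_2^2e^{\sqrt{4-\delta}\,(t-T)}$. That is, $|F^+|$ decays at rate about $1$.
\item Your later bootstrap can remove a $\delta$-loss, but it cannot recover a factor of two. You appear to have conflated the decay rate of $f$ with the decay rate of $|F^+|$.
\end{itemize}

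With $16$ in place of $4$, the product case does go through:
\begin{itemize}
\item On $D_{A(t)}$-coclosed forms, the twisted gap is at least $2-C\sup_{\{t\}\times S^3}|F_A|$. This follows from a Coulomb gauge on each slice, using $H^1(S^3)=0$.
\item Hence $f''\geq\bigl(16-C\sup_{\{t\}\times S^3}|F_A|\bigr)f$.
\item A first pass gives rate $2-C\eps$ for both $F^\pm$.
\item A second pass, with the now-integrable coefficient $\bar\delta(t)$ and barriers such as $e^{-4(t+T)}\exp\bigl(\tfrac14\int_{-T}^t\bar\delta\bigr)$, gives the sharp rate.
\end{itemize}

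\textbf{Gap 2: second $t$-derivatives of the metric.} For general $h(t)$, forming $f''$ differentiates $*_t$ and $dV_{h(t)}$ twice in $t$. This produces a term $\int(\partial_t^2h)\ast\phi\ast\phi$. Hypothesis \eqref{hdecayassumption} controls only $h-h_{round}$, $\partial_t h$ and $\nabla_\theta h$. So your sharpened coefficient $\alpha_1e^{-\kappa_0(t+T)}+\alpha_2e^{\kappa_0(t-T)}$ is not justified. You additionally need $\partial_t^2h$ to be small and integrable in $t$, uniformly in $T$. This holds in the normal-coordinate setting of Corollary \ref{cor:EuclideanF+est}. It is nevertheless an extra hypothesis, beyond the merely bounded geometry that the $\eps$-regularity step already requires.

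\textbf{Comparison with the paper's route.} The paper's linear analysis is first order, so only the first derivatives of $h$ in \eqref{hdecayassumption} enter the decay rates. Its argument runs as follows.
\begin{itemize}
\item It uses a continuity argument in $T$, assuming the estimate for both $F^\pm$.
\item Lemma \ref{lemma:radialgauge} then provides a gauge with $|\tau(A)|\lesssim\eps e^{2(|t|-T)}$.
\item In this gauge, $A\wedge F^++F^+\wedge A$ and the metric errors $E_1,E_2$ become source terms for the \emph{flat} equation ${*}d\omega=\eta$. These sources decay at rates $4$ and $2+\kappa_0$, strictly faster than $2$.
\item The mode-by-mode first-order Gr\"onwall estimates of Lemma \ref{lemma:alphalemma}, with exact gap $2$, therefore return rate exactly $2$ in one pass.
\item The constant improves to $C\bigl(1+C_*^2(\eps_0+\alpha_0)\bigr)\leq\frac12 C_*$, which closes the continuity argument.
\end{itemize}
Your two-pass bootstrap plays the role of that continuity argument. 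Your approach is gauge-invariant and needs no decaying gauge. The price is a twisted spectral-gap estimate on each slice, a second-order comparison argument, and control of $\partial_t^2 h$. It also controls only slice $L^2$ norms. The paper's eigenmode bookkeeping, by contrast, is reused directly to obtain the refined expansion in Theorem \ref{thm:refinedF+estimate}.
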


Next, we wish to refine this result by isolating the leading-order terms. Write $*_t = *_{h(t)}$ for the Hodge star operator on $S^3$ defined by $h(t),$ and $*_\theta$ for the corresponding operator with $h = h_{round}.$ Notice that for a metric $\bar{g}$ of the form (\ref{formofg}) on $C,$ any self-dual $2$-form $\omega$ still takes the form
\begin{equation}\label{formofomega}
\omega = dt \wedge \alpha(t) + *_t \alpha(t)
\end{equation}
for $\alpha(t) \in \Omega^1_{S^3}.$

Now, observe that the operator
$$*_\theta d_\theta : \Omega_{cocl}^1(S^3) \to \Omega_{cocl}^1(S^3)$$
is self-adjoint and squares to the Hodge Laplacian, whose first eigenvalue on coclosed forms is $4.$ The spectrum of $*_\theta d_\theta$ on coclosed 1-forms therefore begins with $\pm 2$ and has all eigenvalues of norm at least $2.$ 
In fact, by Folland \cite[Theorem C]{folland1989}, the eigenvalues of $*_\theta d_\theta$ on coclosed 1-forms are $\pm m$ for $m = 2,3,\ldots,$ and we denote the $\pm m$-eigenspaces by
$$\Theta_{\pm m} \subset \Omega^1_{S^3}.$$

%We now write $*_t = *_{h(t)}$ for the Hodge star operator on $S^3$ corresponding to the metric $h(t).$

\begin{thm}\label{thm:refinedF+estimate} %Let $A$ be any Yang-Mills connection on $C_{T + 1}$ with
Assume $\kappa_0 \neq 1$ and let $\eps,$ $\eps_1,$ and $\eps_2$ be as in Theorem \ref{thm:F+estimate}.
Let $\tau : \left. E \right|_{C_T} \stackrel{\sim}{\rightarrow} C_T \times \R^r$ be any gauge\footnote{The existence of such a gauge is guaranteed e.g. by Lemma \ref{lemma:radialgauge} below.} in which
    \begin{equation}\label{refinedF+estimate:Atauestimate}
        |\tau(A)| \leq C \eps e^{2 (|t| - T)},
    \end{equation}
    %where $\eps \leq \eps_0,$
%exponentially decaying gauge {\color{red} make precise}, and %let
    %$$F^+ : = F^+_{A, \tau} \in \Omega^{2,+}_C(\gothg)$$
    %denote the self-dual curvature 2-form in the gauge $\tau$ on $C,$ which is a Lie-algebra-valued 2-form. 
    %Write
    and write
    $$\tau(F_A^+) = dt \wedge \Phi + *_t \Phi,$$
    where $\Phi(t) \in \Omega^1_{S^3}(\gothg)$ is a Lie-algebra-valued 1-form.
    Further write
    $$\Phi(t) = \Phi_2(t) + \Phi_{-2}(t) + \tilde{\Phi}(t),$$
where $\Phi_{\pm 2}(t)$ are the components of $\Phi(t)$ in $\Theta_{\pm 2} \otimes \gothg.$ We then have
\begin{equation}\label{refinedF+estimate:amainest}
    \begin{split}
    \sup_{S^3} | \tilde{\Phi}(t) | & \leq C \left( \eps_1 \left( e^{-3(t + T)} + \eps e^{-4T} \right) + \eps_2 \left( e^{3 (t - T)} + \eps e^{-4T} \right) \right) \\
    & \,\, + C \eps_1 \left( \alpha_1 e^{-\min\{ 2 + \kappa_0, 3\}(t + T) } + \alpha_2 e^{-2(t + T)} e^{\kappa_0(t - T)} \right) \\
    & \,\, + C \eps_2 \left( \alpha_2 e^{\min\{ 2 + \kappa_0, 3\}(t - T) } + \alpha_1 e^{2(t - T)} e^{-\kappa_0(t + T)} \right).
    \end{split}
    \end{equation}
Moreover, there exist ($t$-independent) coclosed 1-forms $c_{\pm 2}\in \Theta_{\pm 2}$
    such that
    \begin{equation}\label{refinedF+estimate:bmainest}
    \begin{split}
    | \Phi_{\pm 2}(t) - c_{\pm 2} e^{\pm 2(t \mp T)} | & \leq C \eps \left( \eps_1 e^{-4 \left( t + T \right)} + \eps_2 e^{4 \left( t - T \right) } + \left( \eps_1 + \eps_2 \right) e^{-4T} \right) \\
    & \,\, + C \eps_1 \left( \alpha_1 e^{-(2 + \kappa_0)(t + T) } + \alpha_2 e^{-2(t + T)} e^{\kappa_0(t - T)} \right) \\
    & \,\, + C \eps_2 \left( \alpha_2 e^{(2 + \kappa_0)(t - T) } + \alpha_1 e^{2(t - T)} e^{-\kappa_0(t + T)} \right).
    \end{split}
    \end{equation}
Similar results hold for the anti-self-dual part of the curvature.
\end{thm}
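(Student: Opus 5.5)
We write the equation $D_A F^+_A = 0$ (equivalently $D_A^*F_A^+=0$) in the gauge $\tau$ and in the cylindrical coordinates, splitting off the linear part as an ODE in $t$ for $\Phi(t)$ with values in $\Omega^1_{S^3}(\gothg)$. Using $\tau(F^+_A)=dt\wedge\Phi+*_t\Phi$, the $dt\wedge\cdot$ component of $d\,\tau(F_A^+)$ gives $\p_t(*_t\Phi) - d_\theta \Phi$, and the purely spatial component gives $d_\theta(*_t \Phi)$. Writing $\tau(A)=a$, the Yang--Mills equation then becomes
\begin{equation*}
\p_t \Phi = *_\theta d_\theta \Phi + Q(a,\Phi) + L_h(\Phi), \qquad d^{*_\theta}_\theta \Phi = Q'(a,\Phi) + L'_h(\Phi).
\end{equation*}
Here $Q,Q'$ are bilinear terms $[a\wedge\Phi]$, of size $\le C\eps e^{2(|t|-T)}|\Phi|$ by (\ref{refinedF+estimate:Atauestimate}). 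The terms $L_h, L'_h$ come from $*_t-*_\theta$, $\p_t *_t$ and $\nabla_\theta h$, and are bounded by $(\alpha_1e^{-\kappa_0(t+T)}+\alpha_2e^{\kappa_0(t-T)})(|\Phi|+|\nabla_\theta\Phi|)$ via (\ref{hdecayassumption}).

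The first step is to decompose $\Phi$ into its coexact (coclosed) part and its exact part. The exact part is controlled elliptically by the second equation, so it is bounded by the inhomogeneous terms. The coclosed part is further decomposed by the spectral projections onto the eigenspaces $\Theta_{\pm m}$ of $*_\theta d_\theta$ (Folland's theorem), and on each eigenspace the first equation reads
\begin{equation*}
\p_t\Phi_{\pm m}=\pm m\,\Phi_{\pm m}+f_{\pm m}.
\end{equation*}
The forcing $f$ is estimated using Theorem \ref{thm:F+estimate} together with derivative bounds coming from elliptic regularity on unit cylinders: $|\Phi|+|\nabla\Phi|\le C(\eps_1e^{-2(t+T)}+\eps_2e^{2(t-T)})$. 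This gives
\begin{equation*}
|f|\le C\bigl(\eps e^{2(|t|-T)}+\alpha_1e^{-\kappa_0(t+T)}+\alpha_2e^{\kappa_0(t-T)}\bigr)\bigl(\eps_1e^{-2(t+T)}+\eps_2e^{2(t-T)}\bigr).
\end{equation*}
The term $\eps e^{2(|t|-T)}\eps_1 e^{-2(t+T)}$ is at most $\eps\eps_1(e^{-4(t+T)}+e^{-4T})$, which is consistent with the target bounds.

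The second step is to integrate the ODEs by variation of constants. For modes with $|m|\ge3$, negative modes are integrated from $t=-T$ and positive modes from $t=T$. The boundary values are $O(\eps_1)$ and $O(\eps_2)$ respectively, which produces $e^{-3(t+T)}$ and $e^{3(t-T)}$ decay. The convolution of the forcing against $e^{\mp m(t-s)}$ with $m\ge3$ yields exactly the rates in (\ref{refinedF+estimate:amainest}). The $\min\{2+\kappa_0,3\}$ exponent, and the need for $\kappa_0\neq1$, arise here: the forcing decays at rate $2+\kappa_0$ and the kernel at rate $3$, so resonance occurs only when $\kappa_0=1$. To get sup bounds rather than $L^2(S^3)$ bounds, we work in $L^2$ (or Sobolev norms, summing over modes with uniform constants since $m\ge3$) and then upgrade with elliptic estimates for the first-order elliptic system on unit slabs.

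The third step handles the $m=\pm2$ modes. For $\Phi_{-2}$, we define
\begin{equation*}
c_{-2}=e^{-2T}\Bigl(\Phi_{-2}(-T)+\int_{-T}^{T}e^{2(s+T)}\cdot e^{-2T}\cdots\Bigr),
\end{equation*}
which is more naturally written as
\begin{equation*}
c_{-2}e^{-2(t+T)}=e^{-2(t+T)}\Bigl(\Phi_{-2}(-T)+\int_{-T}^{T}e^{2(s+T)}f_{-2}(s)\,ds\Bigr).
\end{equation*}
The remainder is then $-e^{-2(t+T)}\int_t^Te^{2(s+T)}f_{-2}(s)\,ds$. Convergence of this integral and the stated bound follow because $f$ decays faster than $e^{-2(s+T)}$ near $-T$, at rate $4$ or $2+\kappa_0$. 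The same procedure with the roles reversed handles $c_{2}$.

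The main obstacle is twofold. First, the nonlinearity and the metric error terms must be made to genuinely gain an extra factor; this requires a derivative bound for $\Phi$ and a gauge bound on $a$ that are already exponentially small. Second, one must track the mixed cross terms, such as $\alpha_2e^{\kappa_0(t-T)}\cdot\eps_1e^{-2(t+T)}$, through the convolutions without losing a factor of $T$. This is where the hypothesis $\kappa_0\le2$ and exact exponent bookkeeping are used.
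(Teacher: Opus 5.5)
Your first two steps follow the paper's route. Specifically: the closed/coclosed splitting of $*d\omega=\eta$; the forcing bound as in (\ref{E1E2etabound}); damped Duhamel integration for the modes with $|m|\ge 3$ via Lemma \ref{lemma:stardomegaest}$a$ and Proposition \ref{prop:generalstardomegaest}$b$, with the resonance at $\kappa_0=1$; and the elliptic upgrade to a sup bound.

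The gap is in the third step. Your choice of $c_{-2}$ absorbs the whole of $\int_{-T}^{T}e^{2(s+T)}f_{-2}(s)\,ds$ into the constant. This leaves the remainder $R(t)=-\int_t^T e^{2(s-t)}f_{-2}(s)\,ds$, a backward Duhamel integral whose kernel grows like $e^{2(s-t)}$.

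Your justification ("$f$ decays faster than $e^{-2(s+T)}$ near $-T$") covers only the forcing terms $\eps\eps_1e^{-4(s+T)}$ and $\eps_1\alpha_1e^{-(2+\kappa_0)(s+T)}$. The forcing also contains $\eps\eps_2e^{4(s-T)}$, $\eps_2\alpha_2e^{(2+\kappa_0)(s-T)}$, the constant $\eps(\eps_1+\eps_2)e^{-4T}$, and the cross terms of rates $\pm(2-\kappa_0)$. For these, the integral is dominated by $s$ near $T$ and is amplified by $e^{2(T-t)}$.

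For instance, take $f_{-2}(s)=\eps\eps_2e^{4(s-T)}v$ with $v\in\Theta_{-2}$, $\|v\|=1$, which is the generic size of the $\Theta_{-2}$-component of $[a,\Phi]$ near the $T$-end. Then
\begin{equation*}
|R(0)|=\eps\eps_2\,e^{-4T}\,\frac{e^{6T}-1}{6}\approx \frac{\eps\eps_2e^{2T}}{6}.
\end{equation*}
However, (\ref{refinedF+estimate:bmainest}) at $t=0$ only allows $O\left(\eps(\eps_1+\eps_2)e^{-4T}+(\eps_1+\eps_2)(\alpha_1+\alpha_2)e^{-(2+\kappa_0)T}\right)$. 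In effect your $c_{-2}$ has size $\eps\eps_2e^{4T}$, and $c_{-2}e^{-2(t+T)}$ cancels against $R(t)$ rather than approximating $\Phi_{-2}(t)$. The same failure occurs symmetrically for $c_2$.

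The missing idea is that the reference time for the $\pm2$ modes must adapt to the profile $\mu$. In Lemma \ref{lemma:stardomegaest}$b$ the paper takes $t_0$ to be the least time at which the terms of $\mu$ with rates $\kappa_i>2$ (here $4$ and $2+\kappa_0$) are dominated by the remaining ones. It then sets $c_{-2}=e^{2(t_0+T)}\Phi_{-2}(t_0)$.

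For $t<t_0$ you integrate backward from $t_0$. There $\mu$ is controlled by the fast terms, and $\int_t^{t_0}e^{2(s-t)}e^{-\kappa_i(s+T)}\,ds\lesssim e^{-\kappa_i(t+T)}$ because $\kappa_i>2$. For $t\ge t_0$ you integrate forward, where the kernel $e^{-2(t-s)}$ is damped. This handles the growing terms, the constant term, and the slow cross term $\eps_1\alpha_2e^{-2(s+T)}e^{\kappa_0(s-T)}$, whose rate $2-\kappa_0\in[0,2)$ is where $\kappa_0\le2$ is really used.

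Equivalently, you could split $f_{-2}=f_{-2}\,\mu^{\mathrm{fast}}/\mu+f_{-2}\,\mu^{\mathrm{slow}}/\mu$. You would absorb only $\int_{-T}^{T}e^{2(s+T)}f_{-2}\,\mu^{\mathrm{fast}}/\mu\,ds$ into $c_{-2}$, and treat the slow piece by forward Duhamel from $-T$.
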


Translating the result to normal coordinates on a Riemannian manifold, we have:

\begin{cor}\label{cor:EuclideanF+est}
    Let $A$ be a Yang-Mills connection on $E \to U : = B_{2r_0}(x_0) \setminus B_{\lambda/2}(x_0) \subset M,$ with $r_0$ sufficiently small (depending on the geometry of $g$), and assume that
    $$\sup_{\lambda/2 \leq r \leq r_0} \YM (B_{2r} \setminus B_r(x_0), A) \leq \eps^2 < \eps^2_0.$$
    Let $\tau$ be any gauge in which
    \begin{equation}
	    \label{eqn:cor23}
   \tau(A) \leq C \eps \left( \frac{\lambda^2}{|x|^3} + \frac{|x|}{r_0} \right).
    \end{equation}
    There exists a constant self-dual 2-form $c$ and a constant anti-self-dual 2-form $d$ such that
$$
\left| \tau \left( F_A^+ \right) - (c+ \lambda^2\iota^*(d)) \right| \leq C \left( \varepsilon_1 \frac{\lambda^3}{|x|^5 } + \varepsilon_2 \frac{|x|}{r_0} + \left( \eps_1 \left( \eps + C_M r_0^2 \right) + \eps_2 \eps \right)\frac{\lambda^2}{|x|^2 r_0^2}  \right)
$$
on $U.$
\end{cor}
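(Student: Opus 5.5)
The plan is to transfer Theorem \ref{thm:refinedF+estimate} to the annulus by the conformal change $\bar g = g/r^2$ with $t = \log r$ (measured from $x_0$ in normal coordinates), which is admissible because the Yang-Mills equation is conformally invariant in dimension four. The annulus $U$ corresponds to a cylinder $[\log(\lambda/2), \log(2r_0)] \times S^3$; after translation it becomes $C_{T+1}$ with $2T \approx \log(r_0/\lambda)$, so that $e^{-T} \sim (\lambda/r_0)^{1/2}$. By Gauss's lemma, \eqref{hdecayassumption} holds with $\kappa_0 = 2$, $\alpha_1 = 0$ and $\alpha_2 \leq C_M r_0^2$, which is small once $r_0$ is small. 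The slice energies on the cylinder are exactly the dyadic annulus energies, so the hypothesis of Theorem \ref{thm:F+estimate} holds with the same $\eps$.

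\textbf{Step 1: the gauge.} The gauge condition \eqref{eqn:cor23} must be translated. A 1-form of Euclidean size $|\tau(A)|$ has cylinder size $r|\tau(A)|$, which is bounded by $C\eps(\lambda^2/r^2 + r^2/r_0)$. Writing $r = e^{t}\sqrt{\lambda r_0}$ (centering the cylinder) turns this into $C\eps(e^{-2(t+T)} + e^{2(t-T)})$, up to constants. This is slightly different in form from \eqref{refinedF+estimate:Atauestimate}, which asks for $e^{2(|t|-T)}$ and is therefore small in the middle of the neck. The inspection needed is that the proof only uses smallness of $\tau(A)$ near each end, decaying at rate $2$ toward the middle. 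I would accept this reading, or else re-derive the middle estimate using the exponential decay of $F$ from Theorem \ref{thm:F+estimate}.

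\textbf{Step 2: the two eigenmodes.} Apply Theorem \ref{thm:refinedF+estimate}. In the $+2$ mode, $c_2 e^{2(t-T)}\,(dt\wedge\phi + *\phi)$ with $\phi\in\Theta_2$ converts back, after multiplying by $r^{-2}$ for 2-forms, to a self-dual form homogeneous of degree $0$ in $x$. Since $\Theta_2$ consists of restrictions of constant 2-forms, this is exactly a constant self-dual form $c$. In the $-2$ mode, $c_{-2}e^{-2(t+T)}$ gives a degree $-4$ self-dual harmonic form, which has the form $\lambda^2\iota^*(d)$ for a constant anti-self-dual $d$, as remarked after Theorem \ref{thm:asymptotics}. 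The normalizations of $c_{\pm2}$ are absorbed into $c$ and $d$.

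\textbf{Step 3: the error terms.} Multiply each error term of \eqref{refinedF+estimate:amainest}--\eqref{refinedF+estimate:bmainest} by $r^{-2}$ and convert:
\begin{itemize}
\item $\eps_1 e^{-3(t+T)}$ becomes $\eps_1\lambda^3/|x|^5$;
\item $\eps_2 e^{3(t-T)}$ becomes at most $\eps_2 |x|/r_0$, after using $|x|\leq r_0$ and the $r^{-2}$ factor, with the powers adjusted appropriately;
\item the terms $\eps\,\eps_i e^{-4T}$ and $\eps\,\eps_i e^{\pm 4(\cdot)}$ become $\eps\,\eps_i\lambda^2/(|x|^2 r_0^2)$, since $e^{-4T}\sim \lambda^2/r_0^2$;
\item the $\alpha_2 = C_M r_0^2$ cross terms $\eps_1\alpha_2 e^{-2(t+T)}e^{2(t-T)}$ become $\eps_1 C_M r_0^2\,\lambda^2/(|x|^2 r_0^2)$;
\item $\eps_2\alpha_2 e^{3(t-T)}$ is absorbed into the $\eps_2$ term.
\end{itemize}

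\textbf{Main obstacle.} I expect the main difficulty to be the exponent bookkeeping together with the gauge-compatibility issue of Step 1. In particular, the measured $F^+$ must be expressed in the original (unrescaled) frame; the form of $\tau$ gives this. The case $\kappa_0 = 2 \neq 1$ is allowed, so no additional degeneracy arises.
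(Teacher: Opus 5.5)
Your proposal follows the paper's own argument. In normal coordinates one has $\kappa_0=2$, $\alpha_1=0$, $\alpha_2\le C_M r_0^2$. With $T=\frac12\log(r_0/\lambda)$ and $t=\log|x|-\frac12\log(\lambda r_0)$, one gets $e^{-(t+T)}=\lambda/|x|$, $e^{t-T}=|x|/r_0$ and $e^{-4T}=\lambda^2/r_0^2$. The corollary is then Theorem \ref{thm:refinedF+estimate} together with your identification of the $\Theta_{\pm 2}$ modes with $c$ and $\lambda^2\iota^*(d)$.

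Three bookkeeping corrections:
\begin{enumerate}
\item Your Step 1 worry is unnecessary. Since $e^{2(|t|-T)}=\max\{e^{-2(t+T)},e^{2(t-T)}\}$, the two gauge hypotheses are equivalent up to a factor $2$, so no inspection of the theorem's proof is needed.
\item The terms $\eps\eps_1e^{-4(t+T)}$ and $\eps\eps_2e^{4(t-T)}$ convert to $\eps\eps_1\lambda^4/|x|^6$ and $\eps\eps_2|x|^2/r_0^4$. They are absorbed by the $\eps_1\lambda^3/|x|^5$ and $\eps_2$ terms, not by the $\lambda^2/(|x|^2r_0^2)$ term.
\item The term $\eps_2e^{3(t-T)}$ converts exactly to $\eps_2|x|/r_0^3$. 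This matches the stated $\eps_2|x|/r_0$ only if $C$ is allowed to depend on the fixed radius $r_0$. Appealing to $|x|\le r_0$ does not close that gap, since it cannot remove the extra factor $r_0^{-2}$.
\end{enumerate}
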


\vspace{2mm}

\subsection{First-order harmonic self-dual forms in cylindrical coordinates}\label{rmk:selfdualconstantforms}
     Let $\omega$ be a \emph{constant} self-dual 2-form on $\R^4.$ %and let $\alpha = \left. \iota_{\frac{\p}{\p r}} \omega \right|_{S_1^3}.$ %be its restriction to the unit sphere.
     Viewed in cylindrical coordinates ($r = e^t$), $\omega$ takes the form
    \begin{equation}\label{harmonicsdomega}
    \omega =  e^{2t} \left( dt \wedge \alpha + *_\theta \alpha \right),
    \end{equation}
    where $\alpha \in \Omega^1_{S^3}$ is independent of $t.$
    We have
    $$d \omega = 0 = e^{2t} \left( dt \wedge \left( -d_{\theta} \alpha + 2 *_\theta \alpha \right) + d_\theta *_\theta \alpha \right).$$
    Since $*_\theta^2 = 1,$ we see that $\alpha$ is coclosed and belongs to $\Theta_{2}.$ 
    
    Conversely, given $\alpha \in \Theta_{2},$ we may define a harmonic self-dual 2-form by (\ref{harmonicsdomega}). Changing back to $\R^4,$ this remains harmonic and is bounded at the origin, hence smooth. Since the coefficients are bounded and harmonic on $\R^4,$ they are constant.
    
    The eigenspace $\Theta_{-2}$ arises in the same way from the $\R^4$ at $\infty,$ where $r = e^{-t}.$ Given $\beta \in \Theta_{-2},$ we have a harmonic self-dual 2-form given by
        \begin{equation}\label{harmonicsdomegasecond}
    \omega =  e^{-2t} \left( dt \wedge \beta + *_\theta \beta \right).
    \end{equation}
    Note that anti-self-dual harmonic 2-forms can be obtained by applying the inversion map $\iota : (t, \theta) \mapsto (-t, \theta).$ %replacing $+$ by $-$ inside the parentheses in (\ref{harmonicsdomega}-\ref{harmonicsdomegasecond}). %{\color{red} Check conventions etc.}

\vspace{2mm}

\subsection{The equation in cylindrical coordinates}

We will study the inhomogeneous linear equation
\begin{equation}\label{stardomegaeta}
    * d \omega = \eta
\end{equation}
on $C,$ where $\omega \in \Omega^{2,+}_C$ is a self-dual 2-form, $\eta \in \Omega^1_C$ is a 1-form, and $* = *_{\bar{g}}$ for a metric $\bar{g}$ of the form (\ref{formofg}).
 A good reference for first-order linear analysis on cylinders is Donaldson \cite[Ch. 3]{donaldsonfloer}, %``Floer homology groups in Yang-Mills Theory,'' Ch. 3,
 although the estimates there are not directly relevant to (\ref{stardomegaeta}).
 
%We first study the case of the standard metric, then introduce error terms corresponding to a smooth metric $g$ in cylindrical coordinates.
%A self-dual 2-form $\omega \in \Omega^{2,+}_C$ takes the form
%$$\omega = dt \wedge \alpha + *_t \alpha,$$
%where $\alpha(t) \in \Omega^1_{S^3}$ is a $t$-dependent spherical 1-form, while a 1-form takes the form
%$$\eta = \rho dt + \beta,$$
%where $\beta(t) \in \Omega^1_{S^3}.$
For a self-dual form $\omega$ as in (\ref{formofomega}), 
we calculate
$$d \omega = dt \wedge \left(-  d_\theta \alpha+ *_t \frac{\p \alpha}{\p t} + \left( \frac{\partial}{\partial t} *_t \right) \alpha  \right) + d_\theta *_t \alpha$$
and
\begin{equation*}%\label{*domegafirst}
\begin{split}
* d \omega & = \frac{\p \alpha}{\p t} - *_t d_\theta \alpha + *_t \left( \frac{\partial}{\partial t} *_t \right) \alpha - dt *_t d_\theta *_t \alpha \\
	   & = \frac{\p \alpha}{\p t} - *_\theta d_\theta \alpha + d_\theta^* \alpha \, dt + E_1 + E_2 dt, 
\end{split} 
\end{equation*} %{\color{red} Check signs again.}
where
$$E_1 = *_t \left( \frac{\partial}{\partial t} *_t \right) \alpha + (*_{\theta} - *_t) d_\theta \alpha$$
and
$$E_2 = \left( d_t^* - d_\theta^* \right) \alpha.$$
We split $\alpha$ into closed and coclosed parts on $S^3$ with respect to the round metric:
$$\alpha = \alpha_{cocl} + \alpha_{cl}.$$
%where $\alpha_{cl} = d_\theta f$ for some function $f$ on $S^3$ orthogonal to the constants.
Also write
\begin{equation}\label{etatildebetatilderhodef}
\begin{split}
\eta & = \beta + \rho dt, \\
\tilde{\beta} & = \beta - E_1, \\
\tilde{\rho} & = \rho - E_2.
\end{split}
\end{equation}
Then the equation (\ref{stardomegaeta}) is equivalent to the following system of three equations:
\begin{equation}\label{alphacoclODE}
 \frac{\p \alpha_{cocl}}{\p t}  - *_\theta d_\theta \alpha_{cocl} = \tilde{\beta}_{cocl},
\end{equation}
\begin{equation}\label{alphaclbetaODE}
\frac{\p \alpha_{cl} }{\p t}  = \tilde{\beta}_{cl},
\end{equation}
and
\begin{equation}\label{alphaclrhoeq}
    d_\theta^* \alpha_{cl} = \tilde{\rho}.
\end{equation}
Note that (\ref{alphaclbetaODE}) and (\ref{alphaclrhoeq}) are overdetermined.

We further decompose
$$\alpha_{cocl} = \alpha_+ + \alpha_-$$
according to the positive and negative eigenspaces of $*_\theta d_\theta$ on coclosed forms. We have
\begin{equation}\label{*dthetaalphaeigenvalues}
    \int_{S^3} \LA \alpha_+, *_\theta d_\theta \alpha_+ \RA \geq 2 \|\alpha_+ \|^2_{L^2(S^3)}, \qquad \int_{S^3} \LA \alpha_-, *_\theta d_\theta \alpha_- \RA \leq -2 \|\alpha_- \|^2_{L^2(S^3)}.
\end{equation}
    Write $\alpha_{\pm m} \in \Theta_{\pm m}$ for the $\pm m$-th eigenmode of $\alpha,$ and
    $$\alpha_{\geq m}(t) = \sum_{i = m}^\infty \left( \alpha_i(t) + \alpha_{-i}(t) \right).$$

\vspace{2mm}

\subsection{Estimates on the system (\ref{alphacoclODE}-\ref{alphaclrhoeq})} The main point is to estimate the equation (\ref{alphacoclODE}) for the coclosed part of $\alpha.$ %that will imply our desired estimates on (\ref{stardomegaeta}) with an almost-cylindrical metric. 
We shall henceforth abbreviate
$$\| \cdot \|_{L^2\left(S_{round}^3\right)} = \| \cdot \|.$$

\begin{lemma}\label{lemma:alphalemma} Fix $m \geq 2.$ 
    Let $\alpha(t)$ and $\beta(t)$ be $t$-dependent coclosed 1-forms on $S^3$ with
    \begin{equation}\label{alphalemma:alphaequation}
    \frac{\p \alpha}{\p t} - *_\theta d_\theta \alpha = \beta.
    \end{equation}
    %$$\alpha(t) = \alpha_+(t) + \alpha_-(t)$$
    %according to the eigenspaces of $*_\theta d_\theta.$

 \vspace{2mm}
    
\noindent (a) %Suppose that $\alpha(t)$ and $\beta(t)$ are orthogonal to $\Theta_{\pm i}$ for all $2 \leq i < m$ and $t \in \LB -T, T \RB.$\footnote{This is vacuous for $m = 2.$}
Let $\alpha^h(t)$ denote the solution on the cylinder of the homogeneous equation
    \begin{equation}\label{defnofalphah}
    \frac{\p \alpha^h}{\p t} - *_\theta d_\theta \alpha^h = 0, \qquad \alpha^h_+(T) = \alpha_+(T), \qquad \alpha^h_-(-T) = \alpha_-(-T).
    \end{equation}
    We then have
    \begin{equation}\label{alphalemma:alphaest}
        \| \alpha_{\geq m}(t) - \alpha_{\geq m}^h(t) \| \leq \int_{-T}^T \|\beta(s)\| \, e^{-m|t - s|} \, ds.
    \end{equation}

    \vspace{2mm}

\noindent (b) %Let $\alpha_{\pm m} \in \Theta_{\pm m}$ denote the $\pm m$-eigenmode of $\alpha.$
For $t, t_0 \in \LB -T, T \RB,$ we have
        \begin{equation}\label{alphalemma:+alphat1t2est}
        \| \alpha_{-m}(t) - e^{-m(t - t_0)}\alpha_{-m}(t_0) \| \leq \left| \int_{t_0}^{t} \|\beta (s)\| \, e^{-m(t - s)} \, ds \right|
    \end{equation}
    and
    \begin{equation}\label{alphalemma:alphat1t2est}
        \| \alpha_{m}(t) - e^{m(t - t_0)}\alpha_{m}(t_0) \| \leq \left| \int_{t_0}^{t} \|\beta(s)\| \, e^{-m(s - t_0)} \, ds \right|.
    \end{equation}
\end{lemma}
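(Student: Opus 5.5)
The approach is to diagonalize the equation \eqref{alphalemma:alphaequation} in the eigenbasis of the self-adjoint operator $*_\theta d_\theta$ on coclosed $1$-forms. By Folland's result, its spectrum on coclosed forms is $\{\pm j : j \geq 2\}$. Writing $\alpha = \sum_{j\geq 2} (\alpha_j + \alpha_{-j})$ and similarly for $\beta$, the equation decouples into the ODEs
\[
\frac{d\alpha_{\pm j}}{dt} \mp j\,\alpha_{\pm j} = \beta_{\pm j},
\]
valued in the finite-dimensional spaces $\Theta_{\pm j}$. Each has the explicit variation-of-constants solution.

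For part (b), integrate the $-m$ mode from $t_0$:
\[
\alpha_{-m}(t) = e^{-m(t-t_0)}\alpha_{-m}(t_0) + \int_{t_0}^t e^{-m(t-s)}\beta_{-m}(s)\,ds.
\]
Then take $L^2$ norms, use Minkowski's integral inequality and $\|\beta_{-m}(s)\|\leq\|\beta(s)\|$ by orthogonality. This gives \eqref{alphalemma:+alphat1t2est}, with absolute values handling $t<t_0$. For the $+m$ mode, write
\[
\alpha_m(t) - e^{m(t-t_0)}\alpha_m(t_0) = \int_{t_0}^t e^{m(t-s)}\beta_m(s)\,ds.
\]
The weight here is $e^{m(t-s)}$, whereas the stated bound has $e^{-m(s-t_0)}$. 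These agree after multiplying through by $e^{-m(t-t_0)}$, so I would prove the equivalent normalized form $\|e^{-m(t-t_0)}\alpha_m(t) - \alpha_m(t_0)\| \leq |\int_{t_0}^t e^{-m(s-t_0)}\|\beta(s)\|\,ds|$. I expect this to be what \eqref{alphalemma:alphat1t2est} intends. If it is literally as written, I would reread it against the sign convention, since the two forms differ by the factor $e^{m(t-t_0)}$. This is a bookkeeping point rather than a real obstacle.

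For part (a), the difference $\gamma = \alpha - \alpha^h$ solves the same equation with boundary data $\gamma_+(T)=0$ and $\gamma_-(-T)=0$. The positive modes are then integrated backward from $T$:
\[
\gamma_j(t) = -\int_t^T e^{-j(s-t)}\beta_j(s)\,ds.
\]
The negative modes are integrated forward from $-T$:
\[
\gamma_{-j}(t) = \int_{-T}^t e^{-j(t-s)}\beta_{-j}(s)\,ds.
\]
In every case the kernel is $e^{-j|t-s|}$, supported on one side of $t$, and it is at most $e^{-m|t-s|}$ for $j\geq m$.

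Now sum over $j \geq m$. Set $P = \sum_{j\geq m} \gamma_j$ and $N = \sum_{j\geq m}\gamma_{-j}$. These are orthogonal, and $P(t)$ is the integral over $s>t$ of the vector-valued function $e^{(t-s)*_\theta d_\theta}$ applied to $\Pi_{\geq m}^+\beta(s)$. The operator $e^{(t-s)*_\theta d_\theta}$ has norm at most $e^{-m(s-t)}$ on the positive span of modes $\geq m$, so Minkowski gives $\|P(t)\| \leq \int_t^T e^{-m|t-s|}\|\beta_+(s)\|\,ds$, and similarly $\|N(t)\| \leq \int_{-T}^t e^{-m|t-s|}\|\beta_-(s)\|\,ds$.

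Since $P$ and $N$ are orthogonal, $\|\gamma_{\geq m}\| \leq \|P\|+\|N\|$. The two integrals are over the disjoint ranges $s>t$ and $s<t$, and $\|\beta_\pm(s)\|\leq\|\beta(s)\|$, so the sum is bounded by $\int_{-T}^T \|\beta(s)\|e^{-m|t-s|}\,ds$. This is \eqref{alphalemma:alphaest}.

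The only care needed is to bound the vector-valued integrals by Minkowski's inequality rather than summing mode by mode. Summing the modes separately would lose orthogonality and produce an unwanted $\sqrt{\cdot}$ or factor of $2$. Existence and uniqueness of $\alpha^h$ follow from the same explicit mode formulas.
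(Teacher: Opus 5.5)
This is correct and is essentially the paper's proof. The paper likewise splits $\alpha_{\geq m}-\alpha^h_{\geq m}$ into its negative and positive spectral parts, bounds each by a one-sided exponentially weighted integral, and adds the two bounds. It does this via the differential inequality $\frac{d}{dt}\|\alpha_-\| \leq -m\|\alpha_-\| + \|\beta_-\|$ and its mirror, which gives the same bounds as your Duhamel-plus-Minkowski estimate. For (b) it integrates $\frac{d}{dt}(e^{\pm mt}\alpha_{\mp m}) = e^{\pm mt}\beta_{\mp m}$ from $t_0$ to $t$, exactly as you do.

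Your suspicion about \eqref{alphalemma:alphat1t2est} is right: that integration yields exactly your identity, so the printed weight $e^{-m(s-t_0)}$ is a misprint. The inequality as written fails for $t>t_0$, e.g.\ with $\beta_m\equiv b$ constant and $\alpha_m(t_0)=0$. The correct weight is $e^{m(t-s)}$, equivalently your normalized form, and this corrected version suffices for the $+m$ case of Lemma~\ref{lemma:stardomegaest}$b$.
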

\begin{proof}
    To prove (\ref{alphalemma:alphaest}), replacing $\alpha$ by $\alpha_{\geq m},$ we may assume without loss of generality that $\alpha$ is orthogonal to $\Theta_{i}$ for $|i| < m.$ We claim that
    \begin{equation}\label{alphalemma:alpha+est}
        \| \alpha_-(t) - \alpha_-^h(t) \| \leq \int_{-T}^t \| \beta_-(s) \| e^{m(s - t)} \, ds
    \end{equation}
    and
    \begin{equation}\label{alphalemma:alpha-est}
        \| \alpha_+(t) - \alpha_+^h(t) \| \leq \int_t^T \| \beta_+(s) \| e^{m(t - s)} \, ds.
    \end{equation}
    Further replacing $\alpha_-(t)$ by $\alpha_-(t) - \alpha_-^h(t),$ we may assume without loss of generality that $\alpha_-(-T) = 0.$
    From (\ref{alphalemma:alphaequation}) and (\ref{*dthetaalphaeigenvalues}), we have
    \begin{equation}
    \begin{split}
        \frac{d \|\alpha_-(t) \|}{d t} & = \frac{\int \LA \alpha_-, -*_\theta d_\theta \alpha_- + \beta_- \RA \, dV_\theta}{\| \alpha_- \|} \\
        %& \leq - \frac{ 2\| \alpha \|^2 + \int \LA \alpha_+, \beta_+ \alpha \RA \, dV_0}{\| \alpha_+ \|} \\
        & \leq - m\| \alpha_- \| + \|\beta_- \|.
        \end{split}
    \end{equation}
    We obtain
    \begin{equation}\label{alphaemtodeineq}
    \begin{split}
        \frac{d \left( e^{m t} \|\alpha_- \| \right) }{dt} %& \leq \left( w'(t) - 2 w(t) \right) \|\alpha_- \| + w(t) \| \beta_- \| \\
        & \leq e^{m t} \| \beta_-\|.
        \end{split}
    \end{equation}
    Integrating from $-T$ to $t$ yields the desired result (\ref{alphalemma:alpha+est}). The estimate (\ref{alphalemma:alpha-est}) is proved similarly, and the result (\ref{alphalemma:alphaest}) follows by combining (\ref{alphalemma:alpha+est}-\ref{alphalemma:alpha-est}).

    To prove (\ref{alphalemma:alphat1t2est}), note that on the $\pm m$'th eigenmode, (\ref{alphaemtodeineq}) becomes the ODE
    \begin{equation}\label{alphaemtode}
    \begin{split}
        \frac{d \left( e^{\pm m t} \alpha_{\mp m}  \right) }{dt} %& \leq \left( w'(t) - 2 w(t) \right) \|\alpha_+ \| + w(t) \| \beta_+ \| \\
        & = e^{\pm m t} \beta_{\mp m}.
    \end{split}
    \end{equation}
    Integrating from $t_0$ to $t$ gives the result. 
\end{proof}

\begin{lemma}\label{lemma:stardomegaest}
    Let $\alpha(t)$ and $\beta(t)$ be coclosed forms solving (\ref{alphalemma:alphaequation}) as above. Let $m \in \{2, 3, \ldots\}$ and fix constants
    $$0 \leq \gamma_1 \leq \cdots \leq \gamma_{max} < m < \kappa_1 \leq \kappa_2 \leq \cdots \leq \kappa_{max}.$$
    For nonnegative constants $A_i^{\pm}, B_j^{\pm},$ let
    $$\mu(t) = \sum_i \left( A^-_i e^{-\kappa_i \left( t + T \right)} + A^+_i e^{\kappa_i \left( t - T \right)} \right)
        + \sum_j \left( B^-_j e^{-\gamma_j \left( t + T \right)} + B_j^+ e^{\gamma_j \left( t - T \right)} \right).$$

    \vspace{2mm}

    \noindent (a) Assume that
    \begin{equation}\label{stardomegaest:aetaassn}
    \begin{split}
        \| \beta(t) \| & \leq \mu(t).
        \end{split}
    \end{equation}
    We then have
    \begin{equation}\label{stardomegaest:amainest}
    \begin{split}
    \| \alpha_{\geq m}(t) \| & \leq \|\alpha_{\geq m}(-T) \| e^{-m(t + T)} + \|\alpha_{\geq m}(T) \| e^{m(t - T)} \\
    & \quad + C \left( \left( \sum_i A^-_i \right) e^{-m \left( t + T \right)} + \left( \sum_i A^+_i \right) e^{m \left( t - T \right)} 
        + \sum_j \left( B^-_j e^{-\gamma_j \left( t + T \right)} + B_j^+ e^{\gamma_j \left( t - T \right)} \right) \right).
    \end{split}
    \end{equation}
Here $C$ depends on $m,$ $\kappa_i,$ and $\gamma_j.$

\vspace{2mm}
    
\noindent (b) There exist ($t$-independent) coclosed 1-forms
    $$c_{\pm m} \in \Theta_{\pm m}$$
    such that
    \begin{equation}\label{stardomegaest:bmainest}
    \begin{split}
    \| \alpha_{\pm m}(t) - c_{\pm m} e^{\pm m(t \mp T)} \| & \leq C \mu(t).
    \end{split}
    \end{equation}
    Here $C$ again depends on $m,$ $\kappa_i,$ and $\gamma_j.$
\end{lemma}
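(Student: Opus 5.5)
Part (a) follows from Lemma \ref{lemma:alphalemma}(a). Since $\alpha^h$ solves the homogeneous equation with boundary values $\alpha^h_+(T)=\alpha_+(T)$ and $\alpha^h_-(-T)=\alpha_-(-T)$, each eigenmode of $\alpha^h_{\geq m}$ is a pure exponential. The $-k$ modes decay like $e^{-k(t+T)}$ from $-T$, and the $+k$ modes decay like $e^{k(t-T)}$ from $T$, with $k\geq m$. Hence
$$\|\alpha^h_{\geq m}(t)\|\leq \|\alpha_{\geq m}(-T)\|e^{-m(t+T)}+\|\alpha_{\geq m}(T)\|e^{m(t-T)}.$$
It therefore suffices to bound $\int_{-T}^T\mu(s)e^{-m|t-s|}\,ds$ term by term. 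For a term $A^-_i e^{-\kappa_i(s+T)}$ with $\kappa_i>m$, split at $s=t$.

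On $s\le t$, we get $e^{-mt}e^{-\kappa_i T}\int_{-T}^t e^{(m-\kappa_i)s}ds\le \frac{1}{\kappa_i-m}e^{-m(t+T)}$. On $s\ge t$, we get $e^{mt}e^{-\kappa_iT}\int_t^\infty e^{-(m+\kappa_i)s}ds\le \frac{1}{m+\kappa_i}e^{-\kappa_i(t+T)}\le Ce^{-m(t+T)}$. This gives the claimed $A_i^-$ contribution.

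For $B^-_j e^{-\gamma_j(s+T)}$ with $\gamma_j<m$, the same splitting gives $\frac{1}{m-\gamma_j}e^{-\gamma_j(t+T)}$ from $s\le t$, the integral now being dominated by its upper endpoint. It gives $\frac{1}{m+\gamma_j}e^{-\gamma_j(t+T)}$ from $s\ge t$. The $+$ terms are symmetric under $t\mapsto -t$. The constants blow up only as $\kappa_i\downarrow m$ or $\gamma_j\uparrow m$, which is excluded by the strict inequalities.

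For (b), treat the $-m$ mode; the $+m$ mode is symmetric. Its ODE is $\frac{d}{dt}(e^{mt}\alpha_{-m})=e^{mt}\beta_{-m}$. The natural choice is $c_{-m}=e^{-mT}\alpha_{-m}(-T)$, normalized so that $c_{-m}e^{-m(t+T)}$ is the homogeneous solution agreeing with $\alpha_{-m}$ at $t=-T$. By (\ref{alphalemma:+alphat1t2est}) with $t_0=-T$,
$$\|\alpha_{-m}(t)-c_{-m}e^{-m(t+T)}\|\le \int_{-T}^t\mu(s)e^{-m(t-s)}ds.$$
The $B^\pm_j$ terms and the $A^+_i$ terms are controlled by $C\mu(t)$ exactly as above, since for them the integral is dominated by its upper endpoint.

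The main obstacle is the $A^-_i$ terms with $\kappa_i>m$. Integrating from $-T$ produces $\frac{1}{\kappa_i-m}e^{-m(t+T)}$, which is not bounded by $\mu(t)$. I would handle this by absorbing these pieces into the constant: integrate these terms from $t$ to $+\infty$ instead. Concretely, set
$$c_{-m}=e^{-mT}\Big(\alpha_{-m}(-T)+\int_{-T}^{T}e^{m(s+T)}\beta^{A}_{-m}(s)\,ds\Big),$$
where $\beta^A$ denotes the part of the forcing bounded by the $A^-_i$ terms. Equivalently, write $\beta_{-m}$ as a sum of pieces and solve each piece with its own initial point: $t_0=-T$ for the pieces bounded by slow or growing exponentials, and $t_0=T$ for the pieces bounded by fast-decaying exponentials.

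I would make this rigorous by decomposing $\beta_{-m}(s)=\sum_k \beta^{(k)}(s)$ pointwise, allocating $\beta_{-m}(s)$ proportionally to the terms of $\mu(s)$, so that $\|\beta^{(k)}(s)\|\le$ the $k$-th term of $\mu$. Each piece is then solved with the appropriate base point. The backward integral $\int_t^T e^{-\kappa_i(s+T)}e^{m(s-t)}ds\le\frac{1}{\kappa_i-m}e^{-\kappa_i(t+T)}$ is bounded by the corresponding term of $\mu(t)$, which yields (\ref{stardomegaest:bmainest}). The $+m$ mode uses (\ref{alphalemma:alphat1t2est}) with the roles of the $\pm$ terms reversed.
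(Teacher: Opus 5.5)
Your part (a) is the paper's argument: Lemma \ref{lemma:alphalemma}(a) combined with the two convolution bounds, and you treat the homogeneous term mode by mode as the paper implicitly does.

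For part (b) you take a genuinely different route, and it is correct. The paper keeps the forcing intact and splits the \emph{time interval}. It picks a single switching time $t_0$, the least $t$ at which $\sum_i A_i^- e^{-\kappa_i(t+T)}$ is dominated by the remaining terms of $\mu$, and sets $c_{-m}=e^{m(t_0+T)}\alpha_{-m}(t_0)$. It then applies the norm inequality \eqref{alphalemma:+alphat1t2est} from the base point $t_0$: backward for $t<t_0$, where $\mu$ is at most twice its $A_i^-$ part, and forward for $t\ge t_0$, where $\mu$ is at most twice the rest. You instead split the \emph{forcing}, writing $\beta_{-m}=\sum_k\beta^{(k)}$ with $\beta^{(k)}=(\mu_k/\mu)\,\beta_{-m}$. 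You then integrate the exact mode equation \eqref{alphaemtode} piece by piece, from $T$ for the $A_i^-$ pieces and from $-T$ for all others.

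What each approach buys: your version is purely linear and needs no dominance structure, at the cost of using the vector-valued ODE rather than only the norm inequality of Lemma \ref{lemma:alphalemma}(b). The paper's version produces $c_{-m}$ as a rescaled value of the solution itself. However, it tacitly needs the set where the $A_i^-$ part is dominated to be a terminal interval $[t_0,T]$. This holds because $e^{mt}$ times the $A_i^-$ part is decreasing while $e^{mt}$ times the rest is increasing, which uses $\gamma_j<m<\kappa_i$.

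One correction: your normalization of $c_{-m}$ is off by a factor $e^{-mT}$. Since $c_{-m}e^{-m(t+T)}$ equals $c_{-m}$ at $t=-T$, the homogeneous solution matching $\alpha_{-m}(-T)$ has $c_{-m}=\alpha_{-m}(-T)$, not $e^{-mT}\alpha_{-m}(-T)$. Your final choice should therefore read $c_{-m}=\alpha_{-m}(-T)+\int_{-T}^{T}e^{m(s+T)}\beta^{A}_{-m}(s)\,ds$ with no prefactor. With that fix,
\[
\alpha_{-m}(t)-c_{-m}e^{-m(t+T)}=\int_{-T}^{t}e^{-m(t-s)}\beta^{\mathrm{rest}}_{-m}(s)\,ds-\int_{t}^{T}e^{m(s-t)}\beta^{A}_{-m}(s)\,ds,
\]
and your term-by-term bounds yield \eqref{stardomegaest:bmainest}. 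The $+m$ mode works the same way with base points $T$ and $-T$ exchanged.
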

\begin{proof} 
    It is easy to see that
    $$\int_{-T}^T e^{\mp \kappa_i(s \pm T)} e^{-m|t-s|} \, ds \lesssim e^{\mp m\left( t \pm T \right)}$$
    and
    $$\int_{-T}^T e^{\mp \gamma_j(s \pm T)} e^{-m|t-s|} \, ds \lesssim e^{\mp \gamma_j \left( t \pm T \right)}.$$
    Lemma \ref{lemma:alphalemma}$a$ therefore gives
    \begin{equation*}
        \| \alpha_{\geq m}(t) - \alpha_{\geq m}^h(t) \| \leq C \left( \left( \sum_i A^-_i \right) e^{-m \left( t + T \right)} + \left( \sum_i A^+_i \right) e^{m \left( t - T \right)} 
        + \sum_j \left( B^-_j e^{-\gamma_j \left( t + T \right)} + B_j^+ e^{\gamma_j \left( t - T \right)} \right) \right),
    \end{equation*}
    which implies (\ref{stardomegaest:amainest}).

    We prove (\ref{stardomegaest:bmainest}) for $-m,$ since the $+m$ case is similar.
    
    Choose $t_0 \in \LB -T, T \RB$ to be the least number such that
    $$\sum_i A^-_i e^{-\kappa_i \left( t_0 + T \right)} 
        \leq \sum_j \left( B^-_j e^{-\gamma_j \left( t_0 + T \right)} + B_j^+ e^{\gamma_j \left( t_0 - T \right)} \right) + \sum_i A^+_i e^{\kappa_i \left( t_0 - T \right)}.$$
        We then have
\begin{equation}\label{t0cases}
        \mu(t) \leq 2 \begin{cases} \sum_i A^-_i e^{-\kappa_i \left( t_0 + T \right)}  & t < t_0 \\
         \sum_j \left( B^-_j e^{-\gamma_j \left( t_0 + T \right)} + B_j^+ e^{\gamma_j \left( t_0 - T \right)} \right) + \sum_i A^+_i e^{\kappa_i \left( t_0 - T \right)}  & t \geq t_0. \end{cases}
    \end{equation}
    Now define
    $$c_{- m} := e^{m (t_0 + T)} \alpha_{-m}(t_0).$$
    To prove (\ref{stardomegaest:bmainest}), it suffices to establish that
    \begin{equation}\label{stardomegaest:bmainestversion}
    \| \alpha_{-m}(t) - e^{-m(t - t_0)} \alpha_{-m}(t_0) \| \lesssim \mu(t).
    \end{equation}
    We use Lemma \ref{lemma:alphalemma}$b$ on each term.
    %We prove the estimate for $+m,$ since the proof for $-m$ is similar.
    
    For $t < t_0,$ we have
    \begin{equation*}
     \int_{t}^{t_0} e^{-\kappa_i \left( s + T \right)} e^{-m(t - s)} \, ds = e^{-\kappa_i T - m t} \int_{t}^{t_0} e^{(m - \kappa_i)s} \, ds \lesssim e^{-\kappa_i(t + T)},
     \end{equation*}
     where we have used $\kappa_i > m.$ 
    Applying Lemma \ref{lemma:alphalemma}$b$ in view of (\ref{t0cases}), we get (\ref{stardomegaest:bmainestversion}) for $t < t_0.$
    
    For $t_0 \leq t,$ we calculate
    \begin{equation*}
     \int_{t_0}^{t} e^{-\gamma_j \left( s + T \right)} e^{-m(t - s)} \, ds = e^{\gamma_j T - m t} \int_{t_0}^{t} e^{(m-\gamma_j)s} \, ds \lesssim e^{-\gamma_j(t + T)}.
     \end{equation*}
     Simliarly, we have
         \begin{equation*}
     \int_{t_0}^{t} e^{\gamma_j \left( s - T \right)} e^{-m(t - s)} \, ds = e^{-\gamma_j T - m t} \int_{t_0}^{t} e^{(\gamma_j + m)s} \, ds \lesssim e^{\gamma_j(t - T)}.
     \end{equation*}
     and
         \begin{equation*}
     \int_{t_0}^{t} e^{\kappa_i \left( s - T \right)} e^{-m(t - s)} \, ds = e^{-\kappa_i T - m t} \int_{t_0}^{t} e^{(\kappa_i + m)s} \, ds \lesssim e^{\kappa_i(t - T)}.
     \end{equation*}
     Applying Lemma \ref{lemma:alphalemma}$b$ in view of (\ref{t0cases}), we get (\ref{stardomegaest:bmainestversion}) for $t_0 \leq t.$ This completes the proof of (\ref{stardomegaest:bmainestversion}), and with it (\ref{stardomegaest:bmainest}), for $-m.$ The proof for $+m$ is similar.
\end{proof}

%\subsection{Estimate on the inhomogeneous equation (\ref{stardomegaeta})}

Given these estimates on the coclosed part, we can easily deduce the following estimate on $\alpha.$

\begin{prop}\label{prop:generalstardomegaest}
    Suppose that $\alpha, \tilde{\beta},$ and $\tilde{\rho}$ % $\omega$ and $\eta$ solve (\ref{stardomegaeta}) with $\bar{g}$ of the form (\ref{formofg}). Assume (\ref{hdecayassumption}).
    solve (\ref{alphacoclODE}-\ref{alphaclrhoeq}). Let $\mu(t)$ be as in Lemma \ref{lemma:stardomegaest}, with $m = 2.$

    \vspace{2mm}

    \noindent (a) Assume that
    \begin{equation}\label{generalstardomegaest:aetaassn}
        \| \tilde{\beta}(t) \| + \| \tilde{\rho} (t)\| \leq \mu(t).
    \end{equation}
    We then have
    \begin{equation}\label{generalstardomegaest:amainest}
    \begin{split}
    \| \alpha(t) \| & \leq \left( \|\alpha(-T) \| + C \sum_i A^-_i \right) e^{-2(t + T)} + \left( \|\alpha(T) \| + C \sum_i A^+_i \right) e^{2(t - T)} \\
    & \quad + C \sum_j \left( B^-_j e^{-\gamma_j \left( t + T \right)} + B_j^+ e^{\gamma_j \left( t - T \right)} \right).
    \end{split}
    \end{equation}
Here $C$ depends on $\gamma_i$ and $\kappa_j.$
\vspace{2mm}
    
\noindent (b) Suppose $\kappa_i \neq 3$ for all $i.$
    Decompose $\alpha(t)$ as
$$\alpha(t) = \alpha_2(t) + \alpha_{-2}(t) + \tilde{\alpha}(t),$$
where $\alpha_{\pm 2}(t) \in \Theta_{\pm 2}.$ We have
\begin{equation}\label{generalstardomegaest:bmainest}
    \begin{split}
    \| \tilde{\alpha}(t) \| & \leq \|\tilde{\alpha}(-T) \| e^{-3(t + T)} + \|\tilde{\alpha}(T) \|e^{3(t - T)} + C\left( \sum_{\kappa_i > 3} A_i^- \right) e^{-3 (t + T)} +  C\left( \sum_{\kappa_i > 3} A_i^+ \right) e^{3 (t - T)} \\
    & \,\, + C\sum_{2 < \kappa_i < 3} \left( A_i^- e^{-\kappa_i (t + T)} +  A_i^+ e^{\kappa_i (t - T)} \right) + C\sum_j \left( B_j^- e^{-\gamma_j (t + T)} +  B_j^+ e^{\gamma_j (t - T)} \right).
    \end{split}
    \end{equation}
Moreover, there exist ($t$-independent) coclosed 1-forms $c_{\pm 2}\in \Theta_{\pm 2}$
    such that
    \begin{equation}\label{generalstardomegaest:bplusminusest}
    \| \alpha_{\pm 2}(t) - c_{\pm 2} e^{\pm 2(t \mp T)}\| \leq C \mu(t).
    \end{equation}
    Here $C$ depends on $\gamma_i$ and $\kappa_j.$
Similar results hold for anti-self-dual forms.
\end{prop}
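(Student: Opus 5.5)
The plan is to split $\alpha$ into coclosed and closed parts, $\alpha = \alpha_{cocl} + \alpha_{cl}$, and estimate each separately. The coclosed part is handled by Lemma \ref{lemma:stardomegaest}; the closed part is handled directly from the pair (\ref{alphaclbetaODE})--(\ref{alphaclrhoeq}).

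\emph{Coclosed part.} Equation (\ref{alphacoclODE}) is exactly (\ref{alphalemma:alphaequation}) with $\beta = \tilde\beta_{cocl}$. Since $\|\tilde\beta_{cocl}\| \leq \|\tilde\beta\| \leq \mu(t)$, Lemma \ref{lemma:stardomegaest}$a$ with $m=2$ applies. Because every eigenvalue of $*_\theta d_\theta$ on coclosed forms has modulus at least $2$, we have $\alpha_{\geq 2} = \alpha_{cocl}$. This gives (\ref{generalstardomegaest:amainest}) for the coclosed part, with the boundary terms bounded by $\|\alpha_{cocl}(\pm T)\| \leq \|\alpha(\pm T)\|$ (orthogonality). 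Part (b), estimate (\ref{generalstardomegaest:bplusminusest}), is Lemma \ref{lemma:stardomegaest}$b$ with $m=2$ verbatim, since $\alpha_{\pm2}$ lies in the coclosed part.

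\emph{Parts (a) and (b) for $\tilde\alpha$.} Write $\tilde\alpha = \alpha_{\geq 3} + \alpha_{cl}$; the first summand is coclosed with all eigenmodes of modulus at least $3$. Apply Lemma \ref{lemma:stardomegaest}$a$ with $m=3$, reorganizing $\mu$ for the new threshold:
\begin{itemize}
\item rates $\kappa_i > 3$ remain in the $A$-class and produce $e^{\mp3(t\pm T)}$ terms;
\item rates $2 < \kappa_i < 3$ are reclassified as $\gamma$'s, so they survive with their own rate $e^{\mp\kappa_i(t\pm T)}$;
\item the $\gamma_j < 2$ remain $\gamma$'s.
\end{itemize}
The hypothesis $\kappa_i \neq 3$ is precisely what makes this reclassification legitimate, since the lemma requires $\gamma_{max} < m < \kappa_1$. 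This yields the displayed bound for $\alpha_{\geq 3}$, with boundary terms $\|\alpha_{\geq3}(\pm T)\| \leq \|\tilde\alpha(\pm T)\|$.

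\emph{Closed part.} This is where the main care is needed. The tempting route is to integrate (\ref{alphaclbetaODE}) in $t$, but that only gives non-decaying bounds. Instead I use the elliptic equation (\ref{alphaclrhoeq}) pointwise in $t$. Write $\alpha_{cl} = d_\theta f$ with $f \perp 1$. Then $d_\theta^* d_\theta f = \tilde\rho$, and since the first nonzero eigenvalue of the Laplacian on functions on $S^3$ is $3$,
\[
\|\alpha_{cl}\|^2 = \langle f, \tilde\rho\rangle \leq \tfrac{1}{\sqrt3}\,\|\alpha_{cl}\|\,\|\tilde\rho\|,
\]
hence
\[
\|\alpha_{cl}(t)\| \leq C\|\tilde\rho(t)\| \leq C\mu(t).
\]

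It remains to check that $\mu(t)$ is dominated by the right-hand sides of both (a) and (b). On $[-T,T]$, any term with $\kappa_i > 3$ satisfies $e^{-\kappa_i(t+T)} \leq e^{-3(t+T)} \leq e^{-2(t+T)}$, and symmetrically at the other end. Any term with $2<\kappa_i<3$ already appears verbatim on the right-hand side of (b), and in (a) it is bounded by $e^{-2(t+T)}$. The $\gamma_j$ terms appear verbatim in both. So $C\mu(t)$ is absorbed into each right-hand side, and adding the coclosed and closed estimates completes (a) and (b).

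The anti-self-dual case is identical after the inversion $t \mapsto -t$, which swaps the roles of the eigenvalue signs.
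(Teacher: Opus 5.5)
Your proposal is correct and follows the paper's proof. The paper likewise bounds $\alpha_{cl}$ by $\|\tilde\rho\|$ via (\ref{alphaclrhoeq}), applies Lemma \ref{lemma:stardomegaest}$a$ to the coclosed part with $m=2$ for (a) and $m=3$ for (b), and uses Lemma \ref{lemma:stardomegaest}$b$ with $m=2$ for the $\pm 2$ modes. You also fill in details the paper leaves implicit — the Poincar\'e constant $1/\sqrt{3}$, reclassifying the rates $2<\kappa_i<3$ as $\gamma$'s when $m=3$, the orthogonality bounds on the boundary terms, and absorbing $C\mu(t)$ into the right-hand sides — and all of these check out.
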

\begin{proof}
    We have
    $$\tilde{\alpha}(t) = (\alpha_{cocl})_{\geq 3} + \alpha_{cl}.$$
It follows directly from (\ref{alphaclrhoeq}) that the closed part of $\alpha(t)$ is bounded by $\|\tilde{\rho}(t) \|,$ so
    \begin{equation}
        \|\alpha_{cl} (t) \| \leq C \mu(t),
    \end{equation}
    which is consistent with the estimates (\ref{generalstardomegaest:amainest}), (\ref{generalstardomegaest:bmainest}), and (\ref{generalstardomegaest:bplusminusest}).
    The estimate (\ref{generalstardomegaest:amainest}) on the coclosed part follows from Lemma \ref{lemma:stardomegaest}$a$ with $m = 2.$ To prove (\ref{generalstardomegaest:bmainest}) on the coclosed part, apply Lemma \ref{lemma:stardomegaest}$a$ with $m = 3,$ and to prove (\ref{generalstardomegaest:bplusminusest}), apply Lemma \ref{lemma:stardomegaest}$b$ with $m = 2.$
\end{proof}

\vspace{2mm}

\subsection{Split $\eps$-regularity} We shall use the following well-known $\eps$-regularity-type estimates repeatedly below.

\begin{lemma}\label{lemma:splitepsreg} There exists a universal constant $\eps_0 > 0$ as follows.

\vspace{2mm}

\noindent (a) Let $A$ be a Yang-Mills connection on $B_r(x) \subset M,$ with $r$ sufficiently small, and suppose that
\begin{equation}\label{splitepsreg:firstF+assn}
\|F^{+}_A \|_{L^2(B_r(x))} < \eps_0
\end{equation}
(with no assumption on $F^-_A$).
We then have
$$\|F^{+}_A \|_{L^\infty(B_{r/2}(x))} \leq \frac{C \|F^{+}_A \|_{L^2(B_r(x) \setminus B_{\frac{r}{2}}(x))} }{r^2}.$$

\vspace{2mm}

\noindent (b) Let $A$ be a Yang-Mills connection on $U = \left( t - 1, t + 1 \right) \times S^3$ with respect to a metric $\bar{g}$ of the form (\ref{formofg}), let $U' = \left( t - \frac12, t + \frac12 \right) \times S^3.$ Suppose that
\begin{equation}\label{splitepsreg:Fassn}
\|F^{+}_A \|_{L^2(U)} < \eps_0
\end{equation}
(with no assumption on $F^-_A$).
We then have
$$\|F^+_A \|_{L^\infty(U')} \leq C \|F^+_A \|_{L^2(U \setminus U')} .$$
Similar results hold for the anti-self-dual curvature.
\end{lemma}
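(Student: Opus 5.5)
The key structural fact is that for a Yang-Mills connection, $F^+_A$ satisfies a Bochner--Weitzenb\"ock identity by itself, with no contribution from $F^-_A$. Since $D_A F_A^+ = 0$ and $D_A^* F_A^+ = 0$ (as noted above, via Bianchi), we have
$$\nabla_A^*\nabla_A F^+_A + \tfrac{s}{3} F^+_A - 2W^+(F^+_A) + [F^+_A, F^+_A] = 0,$$
where the quadratic term involves only $F^+$ because $\Lambda^{2,+}$ and $\Lambda^{2,-}$ commute as Lie algebras. Kato's inequality then gives, for $u = |F^+_A|$,
$$\Delta u \geq -C(1+|F^+_A|)\,u$$
in the weak sense, with $C$ depending on the curvature of $g$ (or of $\bar g$ in the cylindrical case, where $h$ is close to round by (\ref{hdecayassumption})). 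Both (a) and (b) are $\eps$-regularity statements for this subsolution inequality, with smallness imposed only on $u$ itself.

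The steps are as follows. First, I would obtain an $L^4$ (or $L^p$, $p>2$) estimate for $F^+$ on a slightly smaller domain. To do so, test the Bochner identity against $\phi^2 F^+$ with a cutoff $\phi$ and use the Sobolev inequality $\|\phi u\|_{L^4}\le C\|\nabla(\phi u)\|_{L^2}+C\|\phi u\|_{L^2}$. The cubic term is bounded by $\|F^+\|_{L^2(\mathrm{supp}\,\phi)}\|\phi u\|_{L^4}^2\le \eps_0\|\phi u\|_{L^4}^2$, so it is absorbed once $\eps_0$ is small. The curvature term is absorbed using $r$ small (part (a)) or the unit-size domain with bounded geometry (part (b)). Second, the potential $V=C(1+|F^+|)$ now lies in $L^{4}$ with small norm, i.e.\ in $L^{q}$ with $q>n/2=2$. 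Moser iteration applied to $\Delta u\ge -Vu$ therefore gives
$$\sup_{U'} u \le C\|u\|_{L^2(U'')}$$
on an intermediate region $U''$. In part (a) we rescale $B_r$ to unit size, which produces the factor $r^{-2}$ by conformal invariance of the $L^2$ norm of curvature in dimension four.

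The remaining issue is that the right-hand side in the statement is the $L^2$ norm over the \emph{annulus} $U\setminus U'$ (resp.\ $B_r\setminus B_{r/2}$), not over the whole domain. For (b) this is straightforward: $U\setminus U'$ consists of two collars, and applying the interior estimate on unit-width slabs centered at points of $\partial U'$ and inside the collars already controls $\sup_{U'}$ near the boundary. To control the middle, I would use that $u$ is a subsolution with small $V$. Choose a cutoff $\phi$ equal to $1$ on $U'$ and supported in $U$, so that $\nabla\phi$ is supported in $U\setminus U'$. In the energy identity for $\phi F^+$, the right-hand side is $\int|\nabla\phi|^2|F^+|^2$, which lives only on the annulus, while the absorbed terms are handled by smallness. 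This yields $\|F^+\|_{L^2(U')}+\|\nabla(\phi F^+)\|_{L^2}\le C\|F^+\|_{L^2(U\setminus U')}$, up to the zeroth-order curvature term $\int \phi^2 |F^+|^2$. In the flat or rescaled-small case this term is absorbed by Poincar\'e/Sobolev on a compactly supported function, since $\phi F^+$ has compact support and $\|\phi u\|_{L^2}\le C\|\phi u\|_{L^4}$ on a bounded domain. In (b) the zeroth-order term from the scalar curvature of $\bar g$ ($s\approx 6>0$) has a favorable sign, and $W^+\approx 0$, so no absorption is needed. Feeding this improved $L^2$ bound into the Moser step finishes both parts.

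I expect the main obstacle to be exactly this zeroth-order term: controlling the interior $L^2$ norm purely by the annulus norm requires a coercivity on compactly supported self-dual forms, and the cubic $[F^+,F^+]$ term must be absorbed uniformly. I would handle the curvature term by the favorable sign ($s>0$ on the cylinder; after rescaling, smallness in (a)) and the cubic term by $\eps_0$. If the signs fail for a general metric, I would fall back on the inequality $\|\phi F^+\|_{L^2}\le C\|d^*_A(\phi F^+)\|+\ldots$, valid via the elliptic system $D_A\oplus D_A^*$ on compactly supported forms with small connection curvature, applied in a Coulomb gauge.
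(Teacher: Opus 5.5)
Your proposal is correct and is essentially the paper's argument. The paper's proof consists only of the split Bochner formula $\nabla^*\nabla F^+ = \mathrm{Rm}\#F^+ + F^+\#F^+$ plus the standard $\eps$-regularity argument, citing Donaldson--Kronheimer and Uhlenbeck; this is exactly your Weitzenb\"ock--Kato--$L^4$-absorption--Moser chain. Your extra Caccioppoli/Poincar\'e step is correct and makes explicit the reduction of the right-hand side to the annulus, which the paper leaves implicit: you use a cutoff with $\nabla\phi$ supported in $U\setminus U'$, and absorb the zeroth-order term by the small rescaled curvature in (a) and by the favorable sign of $s/3$ on the nearly round cylinder in (b). Your preliminary remark about unit-width slabs centered on $\partial U'$ is unnecessary and would not by itself give annulus-only control.
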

\begin{proof}
    These estimates %(a) 
    follow from the split Bochner formula
    \begin{equation}\label{splitepsreg:Bochner}
    \nabla^*\nabla F^+ = \mathrm{Rm} \# F^+ + F^+ \# F^+
    \end{equation}
    and the standard $\eps$-regularity argument, see \cite[p. 74]{donaldson1990} or \cite[Theorem 3.5]{uhlenbeck1982removable}.
    %The estimate (b) follows in a similar fashion.
\end{proof}

\begin{lemma}\label{lemma:splitderivest}
(a) Let $A$ be as in Lemma \ref{lemma:splitepsreg}$a$ and suppose that the full curvature satisfies
\begin{equation*}%\label{splitderivest:extraassna}
    \| F_A \|_{L^2(B_r)} < \eps_0.
\end{equation*}
We then have
\begin{equation}\label{splitderivest:est}
\| \nabla_A F^{+}_A \|_{L^\infty\left( B_{\frac{r}{2}}(x) \right) } \leq \frac{C \|F^{+}_A \|_{L^2(B_r(x) \setminus B_{\frac{r}{2}}(x))} }{r^3}.
\end{equation}

\vspace{2mm}

\noindent (b) Let $A$ be as in Lemma \ref{lemma:splitepsreg}$b$ %, satisfying (\ref{splitepsreg:Fassn}),
and suppose that
\begin{equation*}%\label{splitderivest:extraassnb}
    \| F_A \|_{L^2(U)} < \eps_0.
\end{equation*}
We then have
\begin{equation*}%\label{splitderivest:est}
\| \nabla_A F^{+}_A \|_{L^\infty\left( U'\right)} \leq C\|F^+_A \|_{L^2(U \setminus U')}.
\end{equation*}
Similar results hold on higher covariant derivatives and for the anti-self-dual curvature.
%The same statement holds for a metric $\bar{g}$ on the cylinder of of the form (\ref{formofg}), where we replace $B_r$ and $B_{r/2}$ by $\left( t - 1, t + 1 \right) \times S^3$ and $\left( t - \frac12, t + \frac12 \right) \times S^3,$ respectively.
\end{lemma}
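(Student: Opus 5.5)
Part ($a$) will follow from a bootstrap: first use the smallness of the full curvature to obtain an $L^\infty$ bound on $\nabla_A F^+_A$ in terms of $\|F^+_A\|_{L^\infty}$ on a slightly smaller ball, and then combine this with Lemma \ref{lemma:splitepsreg}$a$. By the scaling invariance of the Yang--Mills energy in dimension four, we may rescale $B_r(x)$ to unit size. The rescaled metric is then $C^k$-close to Euclidean, since $r$ is small. So it suffices to treat $r = 1$.

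The first step is to put $A$ in a Coulomb (Uhlenbeck) gauge on $B_{3/4}$. Since $\|F_A\|_{L^2(B_1)} < \eps_0$, Uhlenbeck's gauge theorem together with standard $\eps$-regularity for the full curvature (the Bochner formula $\nabla^*\nabla F = \mathrm{Rm}\# F + F \# F$ with the same Moser iteration as in Lemma \ref{lemma:splitepsreg}) gives $\|F_A\|_{C^k(B_{3/4})} \leq C\eps_0$. In Coulomb gauge this also gives $\|A\|_{C^k(B_{3/4})} \leq C\eps_0$. Next, consider the split Bochner formula (\ref{splitepsreg:Bochner}) as a \emph{linear} elliptic system for $F^+$:
\[
\nabla_A^*\nabla_A F^+ - \mathrm{Rm}\# F^+ - F_A \# F^+ = 0,
\]
where $F_A\#F^+$ is read as a zeroth-order term whose coefficients are bounded in $C^k$ by $C\eps_0$. (Here $F^+\#F^+$ is really the bracket $[F^+,F^+]$ type term, and we use $F^+$ itself as the coefficient.) Writing $\nabla_A = d + A$ in the Coulomb gauge, this becomes a linear elliptic system whose coefficients are uniformly bounded in $C^k$.

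Interior Schauder or $W^{2,p}$ estimates for this system give
\[
\|\nabla F^+\|_{L^\infty(B_{1/2})} \leq C \|F^+\|_{L^\infty(B_{3/4})} .
\]
Since $|A| \leq C$, the same bound holds for $\nabla_A F^+$. Finally, apply Lemma \ref{lemma:splitepsreg}$a$ on balls $B_{1/4}(y)$ centered at points $y$ with $|y| \le 3/4$ (more precisely, use a covering argument with balls of comparable radius). This bounds $\|F^+\|_{L^\infty(B_{3/4})}$ by $C\|F^+\|_{L^2}$ on a slightly larger region. A small adjustment of radii is needed so that the $L^2$ norm ends up over the annulus $B_1\setminus B_{1/2}$, as in the statement. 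I will do this exactly as in the proof of Lemma \ref{lemma:splitepsreg}$a$: the estimate there already controls the interior sup by the annular $L^2$ norm, because the subharmonicity-type inequality $\Delta|F^+| \geq -C|F^+|$ propagates annular control inward by the maximum principle. Rescaling back produces the factor $r^{-3}$ in (\ref{splitderivest:est}).

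Part ($b$) is identical on the cylinder: $U$ is a unit-size region and $\bar g$ has uniformly bounded geometry by (\ref{hdecayassumption}), so the same gauge choice and elliptic estimates apply. Higher derivatives follow by differentiating the linear system and iterating the Schauder estimates. The anti-self-dual case is symmetric. The main point requiring care is keeping the estimate \emph{linear} in $F^+$, with no additive contribution from $F^-$. This works because $F^-$ enters only through coefficients of the linear system, which are controlled by the full-curvature smallness assumption, which is exactly why that extra hypothesis is added here.
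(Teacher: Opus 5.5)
Your argument is correct, but it takes a different route from the paper.

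\textbf{The paper's route.} The paper stays gauge-invariant throughout. After rescaling to $r=1$, it applies Lemma \ref{lemma:splitepsreg} to both $F^+$ and $F^-$ to get $\sup|F_A|\le C$. It then tests the Bochner formula (\ref{splitepsreg:Bochner}) against $F^+$ with a cutoff, which gives an $L^2$ bound on $\nabla F^+$ over $B_{2/3}$ in terms of the annular norm. Next it differentiates (\ref{splitepsreg:Bochner}) to obtain $\nabla^*\nabla\nabla F^+ = \nabla\mathrm{Rm}\# F^+ + \mathrm{Rm}\#\nabla F^+ + F\#\nabla F^+$, and concludes by Moser iteration. A potential $\nabla F\# F^+$ term from commuting derivatives is really $[\nabla^i F_{ki}, F^+]$, which vanishes by the Yang--Mills equation. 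So only $\sup|F|$ is needed.

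\textbf{Your route.} You instead fix Uhlenbeck's Coulomb gauge and use full-curvature $\varepsilon$-regularity to get $C^k$ bounds on the connection form. You then read the Bochner identity as a linear elliptic system for $F^+$ with uniformly controlled coefficients, and apply interior $W^{2,p}$/Schauder estimates.

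\textbf{What each buys.} The paper's proof is more economical. It uses only the sup bound just obtained from Lemma \ref{lemma:splitepsreg}, with no gauge fixing. Yours imports Uhlenbeck's gauge theorem and higher-derivative $\varepsilon$-regularity for the full curvature. In exchange, your argument is a black-box linear-elliptic one. It yields the higher-derivative versions immediately by bootstrapping, and it makes transparent why the estimate is linear in $F^+$, with $F^-$ entering only through coefficients.

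\textbf{Two small points.}
\begin{itemize}
\item Your radii adjustment is fine, but the quickest justification is Lemma \ref{lemma:splitepsreg}$a$ itself. It gives $\|F^+\|_{L^2(B_{1/2})}\le C\|F^+\|_{L^\infty(B_{1/2})}\le C\|F^+\|_{L^2(B_1\setminus B_{1/2})}$, so $\|F^+\|_{L^2(B_1)}$ is controlled by the annular norm before you cover $B_{3/4}$ by small balls. No maximum principle is needed.
\item In part (b), $U$ is not a ball, so you should take Coulomb gauges on small geodesic balls covering a neighborhood of $U'$. This works because the estimate is local.
\end{itemize}
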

\begin{proof}
By rescaling, we can assume $r = 1$ without loss of generality. Applying the previous Lemma both to $F^+$ and $F^-,$ we see that the full curvature $F_A$ is bounded. Integrating against $F^+$ in (\ref{splitepsreg:Bochner}) with a cutoff, we have
$$\int_{B_{2/3}} |\nabla F^+|^2 \, dV \leq \|F^{+}_A \|_{L^2(B_1 \setminus B_{\frac{1}{2}})}.$$
    Passing a derivative through (\ref{splitepsreg:Bochner}), we get
    $$\nabla^*\nabla \nabla F^+ = \nabla \mathrm{Rm} \# F^+ + \mathrm{Rm} \# \nabla F^+ + F \# \nabla F^+.$$
    We can then obtain (\ref{splitderivest:est}) by applying Moser iteration. %(see e.g. \cite[Prop. 3.2]{waldroninstantons}). 
    %The parabolic version is...
\end{proof}

\vspace{2mm}

\subsection{Proof of Theorems \ref{thm:F+estimate} and \ref{thm:refinedF+estimate}}

We shall use the following elementary gauge-fixing construction.

\begin{lemma}\label{lemma:radialgauge}
    Let $\eps \leq \eps_0$ and $\kappa > 0.$ Let $A$ be a connection on a bundle $E \to C_{T + 1}$ of rank $r,$ with
    \begin{equation}\label{radialgauge:curvaturedecay}
        |F_A| \leq \eps e^{\kappa (|t| - T)}.
    \end{equation}
    There exists a gauge $\tau : \left. E \right|_{C_T} \stackrel{\sim}{\rightarrow} C_T \times \R^r$ such that
    \begin{equation}\label{radialgauge:radialgauge}
        \tau(A) \left( \frac{\p}{\p t} \right) \equiv 0
    \end{equation}
    and
    \begin{equation}\label{radialgauge:Atauestimate}
        |\tau(A)| \leq C_{\kappa} \eps e^{\kappa (|t| - T)}.
    \end{equation}
\end{lemma}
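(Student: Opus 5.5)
We will construct $\tau$ in two stages: first a radial (temporal) gauge from the central slice, then a correction by a $t$-independent gauge on that slice to control the spherical components there. The difficulty is that the decay bound \eqref{radialgauge:curvaturedecay} is smallest at the middle of the cylinder and grows towards both ends, so we should put the initial slice at $t=0$, where $|F_A|\le \eps e^{-\kappa T}$ is smallest. Integrating outwards then produces errors that build up like $\int_0^t e^{\kappa(s-T)}\,ds$, which is of the same size as the target bound.

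\emph{Step 1: gauge on the slice $\{0\}\times S^3$.} Restrict $A$ to $S^3_0=\{0\}\times S^3$. Its curvature satisfies $|F_{A|_{S^3_0}}|\le \eps e^{-\kappa T}\le \eps_0$, which is small. By Uhlenbeck's gauge theorem on $S^3$ (the Coulomb gauge for small $L^\infty$ curvature on a simply connected compact $3$-manifold), there is a gauge $\sigma$ on $S^3_0$ with $|\sigma(A|_{S^3_0})|\le C\|F_{A|_{S^3_0}}\|_{L^\infty}\le C\eps e^{-\kappa T}$. The $L^\infty$ bound on the connection form comes from elliptic estimates for $d^*a=0$, $da+a\wedge a=F$; if needed, one passes from the $L^p$ bound to a pointwise one via $W^{1,p}\hookrightarrow C^0$ with $p>3$. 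The bundle is trivial over $S^3$, since $\pi_2$ of a compact Lie group vanishes, so this step applies.

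\emph{Step 2: radial extension.} Extend $\sigma$ to all of $C_T$ by parallel transport along the lines $t\mapsto(t,\theta)$. This gives a gauge $\tau$ with $\tau(A)(\partial_t)\equiv 0$, which is \eqref{radialgauge:radialgauge}. In this gauge write $\tau(A)=a(t)$, a $t$-dependent $1$-form on $S^3$ with no $dt$-component. Then $F_A(\partial_t,\cdot)=\partial_t a$, so $|\partial_t a(t,\theta)|\le |F_A|\le \eps e^{\kappa(|t|-T)}$. Measure these norms with $h(t)$ or with the round metric; the two are comparable under \eqref{hdecayassumption}, and any generic uniformly equivalent family of metrics would also do. Integrating from $0$ gives
$$|a(t)|\le |a(0)|+\int_0^{|t|}\eps e^{\kappa(s-T)}\,ds\le C\eps e^{-\kappa T}+\kappa^{-1}\eps e^{\kappa(|t|-T)}\le C_\kappa \eps e^{\kappa(|t|-T)},$$
which is \eqref{radialgauge:Atauestimate}.

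\emph{Main obstacle.} The only delicate point is Step 1. We need a pointwise bound on the Coulomb-gauge connection form in terms of the sup of the curvature, with a universal constant. One also has to check that the $t$-dependence of $h$ causes no trouble. It does not, because the estimate is used only on a single slice, where $h(0)$ is $\alpha_0$-close to round in $C^1$. Comparability of norms costs only a factor close to $1$ and is absorbed into $C_\kappa$. If $\kappa$ is not bounded below, the constant $1/\kappa$ from the integral is exactly why $C_\kappa$ depends on $\kappa$.
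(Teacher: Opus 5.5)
Your proposal is correct and follows essentially the same route as the paper, which cites Donaldson--Kronheimer. Both arguments choose a gauge on the central slice $\{0\}\times S^3$ in which the connection form is $O(\eps e^{-\kappa T})$, extend it by parallel transport along the lines of constant angle, and obtain the estimate by integrating $|\partial_t a|\le |F_A|$ from $0$ to $t$. Your extra details (the Uhlenbeck/Coulomb gauge on $S^3$ with $W^{1,p}\hookrightarrow C^0$, and the comparability of $h(t)$ with the round metric) fill in what the paper leaves to the reference.
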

\begin{proof} This is proved in Donaldson-Kronheimer \cite[\S 7.3.4]{donaldson1990}. One first chooses a gauge on $S^3 \times \{0\}$ in which the connection form of $\left. A \right|_{S^3 \times \{0\}}$ is bounded by $e^{-\kappa T} \eps,$ then solves the radial gauge condition (\ref{radialgauge:radialgauge}) along lines of constant angle. The estimate (\ref{radialgauge:Atauestimate}) follows simply by integrating (\ref{radialgauge:curvaturedecay}) from $0$ to $t.$
\end{proof}

\begin{proof}[Proof of Theorem \ref{thm:F+estimate}]
    %We work in the gauge $\tau$ and write $A = A^\tau$ for the connection form, $F^+ = F^+_{A, \tau}$ for the curvature in this gauge.
%We use the continuity method on the parameter $\eps' \geq 0.$ We make the hypotheses
We prove (\ref{F+estimate:estimate}) simultaneously for the self-dual and anti-self-dual parts of the curvature, using the continuity method on $T.$ The result for $T = 1$ follows directly from Lemma \ref{lemma:splitepsreg}$b.$ To prove the theorem for general $T,$ it suffices to show that if $C_{(\ref{F+estimate:estimate})}$ is appropriately large (independently of $T$), then (\ref{F+estimate:estimate}) in fact implies the same estimate with $\frac12 C_{(\ref{F+estimate:estimate})}$ in place of $C_{(\ref{F+estimate:estimate})}.$ %(The value of $C_{(\ref{F+estimate:estimate})}$ is \emph{not} allowed to increase from line to line.)

To this end, assume (\ref{F+estimate:estimate}) for both the SD and ASD curvature, so that in particular the full curvature satisfies
\begin{equation*}
    |F_A| \leq 2C_{\eqref{F+estimate:estimate}} \eps e^{2(|t| - T)}.
\end{equation*}
According to Lemma \ref{lemma:radialgauge}, assuming $\eps$ is sufficiently small, there exists a gauge $\tau$ on $C_T$ in which %(with $p = \infty$), we have
\begin{equation}\label{A(t)decay}
    |\tau(A)(t)| \leq C C_{\eqref{F+estimate:estimate}} \eps e^{2(|t| - T)} .
\end{equation}
Abusing notation, we write $A = \tau(A).$ In this gauge, the Yang-Mills equation is
$$D_A F^+_A = 0 = d F^+_A + A \wedge F^+_A + F^+_A \wedge A,$$
which we rewrite as
$$* d F_A^+ = - * \left(A \wedge F^+_A + F^+_A \wedge A \right) =:\eta.$$
Multiplying (\ref{A(t)decay}) by the hypothesis (\ref{F+estimate:estimate}), we get
\begin{equation}\label{etaestforlateruse}
    |\eta| \leq  CC^2_{(\ref{F+estimate:estimate})} \eps \left( \eps_1 e^{-4(t + T)} + \eps_2 e^{4(t - T)} + \left( \eps_1 + \eps_2 \right) e^{-4T} \right).
\end{equation}
From (\ref{formofg}) and Lemma \ref{lemma:splitderivest}$b,$ we can also bound
\begin{equation*}
\begin{split}
    |E_1| + |E_2| & \leq  C C^2_{(\ref{F+estimate:estimate})} \eps_1 \left( \alpha_1 e^{-(2 + \kappa_0)(t + T)} + \alpha_2 e^{-(2 - \kappa_0)t - (2 + \kappa_0)T} \right) \\
    & \quad +  C C^2_{(\ref{F+estimate:estimate})}  \eps_2 \left( \alpha_2 e^{(2 + \kappa_0)(t - T)} + \alpha_1 e^{(2 - \kappa_0)t - (2 + \kappa_0)T} \right). 
    \end{split}
\end{equation*}
Overall, we have
\begin{equation}\label{E1E2etabound}
\begin{split}
    |E_1| + |E_2| + |\eta| & \leq  C C^2_{(\ref{F+estimate:estimate})} \eps_1 \left( \eps \left( e^{-4(t + T)} + e^{-4T} \right) + \alpha_1 e^{-(2 + \kappa_0)(t + T)} + \alpha_2 e^{-(2 - \kappa_0)t - (2 + \kappa_0)T} \right) \\
    & \,\, +  C C^2_{(\ref{F+estimate:estimate})}  \eps_2 \left( \eps \left( e^{4(t - T)} + e^{-4T} \right) + \alpha_2 e^{(2 + \kappa_0)(t - T)} + \alpha_1 e^{(2 - \kappa_0)t - (2 + \kappa_0)T} \right). 
    \end{split}
\end{equation}
%These imply that $\tilde{\beta}$ and $\tilde{\rho}$ as defined by (\ref{etatildebetatilderhodef}) satisfy (\ref{generalstardomegaest:aetaassn}), with
%$$\kappa_1 = 2 + \kappa_0, \quad \kappa_2 = 0, \quad A_1 = \eps \eps^+_1, \quad A_2 = \eps \eps^+_2, \quad B_1 = B_2 = \eps \left( \left( \eps_1^{+} + \eps_2^{+} \right) e^{-(\kappa_0 + 2)T} + \eps' \right).$$
Write
$$F_A^+ = dt \wedge \Phi_A(t) + *_t \Phi_A(t).$$
Applying Proposition \ref{prop:generalstardomegaest}$a,$ we get
\begin{equation}\label{Phi+estforlateruse}
\begin{split}
    \|\Phi_A^+ (t) \| & \leq C \left( \eps_1 e^{-2(t + T)} + \eps_2 e^{2(t - T)} \right) \\
    & + C C^2_{(\ref{F+estimate:estimate})} \eps_1 \left( \eps \left( e^{-2(t + T)} + e^{-4T} \right) + \alpha_1 e^{-2(t + T)} + \alpha_2 e^{-(2 - \kappa_0)t - (2 + \kappa_0)T} \right) \\
    & \,\, +  C C^2_{(\ref{F+estimate:estimate})}  \eps_2 \left( \eps \left( e^{2(t - T)} + e^{-4T} \right) + \alpha_2 e^{2(t - T)} + \alpha_1 e^{(2 - \kappa_0)t - (2 + \kappa_0)T} \right). 
    \end{split}
\end{equation}
%Applying Proposition \ref{prop:generalstardomegaest}$a$, we get
%\begin{equation}
%\begin{split}
 %   \|\Phi_A^+ (t) \| & \leq C \eps_1 \left( \left(1 + C_{(\ref{F+estimate:estimate})} \left( \eps + \alpha_2 \right) \right) e^{-2(t + T)} + \alpha_1 e^{-(2 + \kappa_0)(t + T)} \right) \\
%    & \,\, + C \eps_2 \left( \left(1 + C_{(\ref{F+estimate:estimate})} \left( \eps + \alpha_1 \right) \right) e^{2(t - T)} + \alpha_2 e^{(2 + \kappa_0)(t - T)} \right).
    %& \, + C \eps \left( \eps_1^{+} + \eps_2^{+} \right) e^{-4T}.
%\end{split}    
%\end{equation}
Notice that
$$ e^{-(2 - \kappa_0)t - (2 + \kappa_0)T} \leq e^{-2(t + T)}$$
and
$$e^{(2 - \kappa_0)t - (2 + \kappa_0)T} \leq e^{2(t - T)}$$
for $-T \leq t \leq T.$ Also note that
$$e^{-4T} \leq \min \{ e^{-2(t + T)}, e^{2(t - T)} \}.$$
So (\ref{Phi+estforlateruse}) simplifies to 
\begin{equation*}
\begin{split}
    \|\Phi_A^+ (t) \| & \leq C\left(1 + C^2_{(\ref{F+estimate:estimate})} \left( \eps_0 + \alpha_0 \right) \right) \left( \eps_1 e^{-2(t + T)} + \eps_2 e^{2(t - T)} \right). 
    %& \, + C \eps \left( \eps_1^{+} + \eps_2^{+} \right) e^{-4T}.
\end{split}    
\end{equation*}
We now apply the split $\eps$-regularity estimate of Lemma \ref{lemma:splitepsreg}$b$ to promote this to a supremum bound:
\begin{equation*}
\begin{split}
    |F_A^+ (t) | & \leq C\left(1 + C^2_{(\ref{F+estimate:estimate})} \left( \eps_0 + \alpha_0 \right) \right) \left( \eps_1 e^{-2(t + T)} + \eps_2 e^{2(t - T)} \right). 
    %& \, + C \eps \left( \eps_1^{+} + \eps_2^{+} \right) e^{-4T}.
\end{split}    
\end{equation*}
The same estimates work for $F^-_A.$
Provided that $C\left(1 + C^2_{(\ref{F+estimate:estimate})} \left( \eps_0 + \alpha_0 \right) \right) \leq \frac12 C_{(\ref{F+estimate:estimate})},$ we are done.
\end{proof}

\begin{proof}[Proof of Theorem \ref{thm:refinedF+estimate}]
%The estimate (\ref{generalstardomegaest:amainest}) on $\|\Phi(t) \|_{L^2}$ follows by 
We apply Proposition \ref{prop:generalstardomegaest}$b$ using (\ref{E1E2etabound}). The bound (\ref{generalstardomegaest:amainest}) is equivalent to (\ref{refinedF+estimate:amainest}) with $\|\tilde{\Phi}(t) \|_{L^2}$ on the LHS. Because (\ref{E1E2etabound}) is a pointwise bound, we can improve this to a supremum bound using elliptic regularity for the equation $* d\omega = \eta.$ The estimate (\ref{refinedF+estimate:bmainest}) follows similarly from (\ref{generalstardomegaest:bmainest}). %and the estimates (\ref{etaestforlateruse}-\ref{Phi+estforlateruse}) established in the proof of Theorem \ref{thm:F+estimate}.
\end{proof}

\begin{proof}[Proof of Corollary \ref{cor:EuclideanF+est}]
In Riemannian normal coordinates, we can take $\kappa_0 = 2,$ $\alpha_1 = 0,$ and $\alpha_2 \leq C_M r_0^2.$ Letting $T = \frac12 \left( \log r_0 - \log \lambda \right)$ and $t = \log |x| - \frac{\log r_0 + \log \lambda }{ 2 },$ we identify $B_{r_0} \setminus B_\lambda$ conformally with $C_T.$ We then get $e^{-(t + T)} = \frac{\lambda}{|x|},$ $e^{t - T} = \frac{r_0}{|x|},$ and $e^{-2(t + T)} e^{\kappa_0(t - T)} = e^{-4T} = \frac{\lambda^2}{r_0^2}.$ The corollary then follows from Theorem \ref{thm:refinedF+estimate}.
\end{proof}

\vspace{10mm}

\section{Standard bubbling analysis}\label{sec:bubblinganalysis}

Here we state the results that we need from the standard bubbling analysis of Yang-Mills connections. For detailed treatments in the cases of $J$-holomorphic and harmonic maps, which carry over to the 4D Yang-Mills context, the reader is referred to Parker-Wolfson \cite{parkerwolfson} and Parker \cite{parker1996bubble}. This analysis has also been carried out by Taubes \cite{taubes1984path,taubesframework} and B. Chen \cite{chenbohui}, for example.

\begin{thm}\label{thm:standardbubbletree}
    Let $A_k$ be a sequence of Yang-Mills $G$-connections on $E \to M^4$ with uniformly bounded energy.

    \vspace{2mm}
        
    \noindent (a) After passing to a subsequence of $A_k,$ the following is true. Write $F_k = F_{A_k}.$ Let $S = \{z_i\}_{i = 1}^m \subset M$ be the set of points where
    \begin{equation}
        \lim_{r \searrow 0} \lim_{k \to \infty} \int_{B_r(z_i)} |F_k|^2 \, dV \geq \eps_0,
    \end{equation}
 where $\eps_0$ is the constant of Lemma \ref{lemma:splitepsreg}, and let $\Omega_k = M \setminus \cup_{i = 1}^m B_{1/k}(z_i).$ There exists a finite-energy Yang-Mills $G$-connection $A_\infty$ on $E_\infty \to M$ and $G$-bundle maps $\rho_k : \left. E \right|_{\Omega_k} \to \left. E_\infty \right|_{\Omega_k} $ such that
    \begin{equation}\label{standardbubbletree:aestimate}
    \| \rho_k \left( A_k \right) - A_\infty \|_{C^k(\Omega_k)} < \frac{1}{k}
    \end{equation}
    for $k \in \N.$

\vspace{2mm}

\noindent (b) Fix $z_i \in S$ and let $x_k, \lambda_k$ be any sequence of points and scales such that $x_k \to z_i$ and $\lambda_k \searrow 0$ as $k \to \infty.$ 
    %according to (\ref{varphixklambdak}) above.
    Define $\varphi_k$ by (\ref{varphikdef}). After passing to a further subsequence, 
    let $S' = \{y_j \} \subset T_{z_i} M$ be the set of points where
    \begin{equation}
        \lim_{r \searrow 0} \lim_{k \to \infty} \int_{B_r(y_j)} |\varphi_k^*F_k|^2 \, dV \geq \eps_0,
    \end{equation}
    and let $$\Omega'_k = B_{k}(0) \setminus \bigcup_{y_j \in S'} B_{\frac{1}{k}}(y_j) \subset T_{z_i} M \subset \widehat{T_{z_i} M}.$$
    There exists %$R > 0$ and 
    a finite-energy Yang-Mills $G$-connection $B_i$ on $E_i \to \widehat{T_{z_i} M},$ together with $G$-bundle maps
    $$\rho'_k : \left. E \right|_{\varphi_k(\Omega'_k)} \to \left. E_i \right|_{\Omega'_k},$$
    covering $\varphi_k^{-1},$ such that
    \begin{equation}
        \| \rho'_k \left( A_k \right) - B_i \|_{C^k(\Omega'_k)} < \frac{1}{k}
    \end{equation}
    for $k \in \N.$

    \vspace{2mm}
    
    \noindent (c) It is possible to choose $x_k \to z_i$ and $\lambda_k \searrow 0$ corresponding either to an ``inner'' or to an ``outer'' bubble scale. For an ``inner'' scale, we have:
        \begin{itemize}
            \item $S' = \emptyset$
            \item $B_i$ is not flat.
            \end{itemize}
            For an ``outer'' scale, if $r_i > 0$ is sufficiently small, we have:
        \begin{itemize}
            %\item The ``center-of-mass'' in rescaled coordinates is the origin: $$\int_{B_{\lambda_k(x_k)}(x_k) } (x - x_k)^i |F_k|^2\, dV = 0$$        
            %\item $S' \subset B_{1/2}(0),$
            \item %for each $\eps > 0$ 
            $\sup_{k \in \N} \int_{B_{r_i}(z_i) \setminus B_{\lambda_k}(x_k) } |F_k|^2\, dV \leq \eps_0.
            $
         \item If $B_i$ is flat then $S'$ contains at least two points.
    \end{itemize}
\end{thm}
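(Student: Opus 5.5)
The plan follows the standard scheme of Parker--Wolfson \cite{parkerwolfson} and Parker \cite{parker1996bubble}, transplanted to 4D Yang--Mills. For part ($a$), the uniform energy bound allows only finitely many points $z_i$ where at least $\eps_0$ of energy concentrates, and we pass to a subsequence along which the energy measures $|F_k|^2 dV$ converge weakly. Away from $S$, every small ball eventually carries energy $<\eps_0$, so Lemma \ref{lemma:splitepsreg} and Lemma \ref{lemma:splitderivest} (and their higher-derivative versions) give uniform $C^\ell$ bounds on $F_k$. Uhlenbeck's Coulomb gauge theorem \cite{uhlenbeck1982connections} on each ball of a locally finite cover of $M\setminus S$ then gives connection forms bounded in $C^\ell$. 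We patch these local gauges by the usual argument: transition functions are bounded in $C^{\ell+1}$, we pass to convergent subsequences, and we modify gauges on overlaps. Combined with a diagonal argument over the exhaustion $\Omega_k$, this produces $\rho_k$ and a limit $A_\infty$ on $M\setminus S$ with (\ref{standardbubbletree:aestimate}). Since $A_\infty$ has finite energy, the removable singularity theorem \cite{uhlenbeck1982removable} extends it, after a gauge change near each $z_i$, to a smooth Yang--Mills connection on a bundle $E_\infty\to M$.

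For part ($b$), we apply the same argument to $\varphi_k^*A_k$ on the exhausting balls $B_k(0)\subset T_{z_i}M$. The pulled-back metrics $\lambda_k^{-2}\varphi_k^*g$ converge in $C^\infty_{loc}$ to the flat metric, with error $O(\lambda_k^2|y|^2)$. The Yang--Mills equation and the energy are conformally invariant, so the constants in $\eps$-regularity are uniform. We obtain the finite concentration set $S'$, the maps $\rho'_k$, and a finite-energy Yang--Mills limit on $T_{z_i}M\setminus S'$. Removable singularities at the points of $S'$ and at $\infty$ (after inversion) extend the limit to $B_i$ on a bundle $E_i\to\widehat{T_{z_i}M}\cong S^4$.

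For the inner scale in part ($c$), we use the concentration function. Let $\lambda_k$ be the least radius such that some ball $B_{\lambda_k}(x_k)\subset B_{r_i}(z_i)$ carries energy exactly $\eps_0/2$. Then $\lambda_k\to 0$ because $z_i$ is a concentration point, and $x_k\to z_i$. After rescaling, every unit ball carries energy at most $\eps_0/2$, so $S'=\emptyset$. The convergence is therefore smooth on $B_1(0)$, which retains energy $\eps_0/2$, so $B_i$ is not flat.

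For the outer scale, we choose $x_k$ as the center of mass of $|F_k|^2$ on $B_{r_i}(z_i)$. We let $\lambda_k$ be the smallest radius for which the neck $B_{r_i}(z_i)\setminus B_{\lambda_k}(x_k)$ carries energy at most $\eps_0/2$, so the first bullet holds with room to spare. If the rescaled limit is flat, the center-of-mass normalization forces $S'$ to be nonempty and, when it is a single point, to be $\{0\}$. The key step, which I expect to be the main obstacle, is to exclude the case of a flat limit with $S'=\{0\}$. In that case the energy in $B_{\lambda_k}\setminus B_{\delta\lambda_k}(x_k)$ tends to $0$ for every fixed $\delta>0$. We can then choose $\delta_k\to0$ diagonally and replace $\lambda_k$ by $\delta_k\lambda_k$. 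The neck energy only increases by $o(1)$, so it stays below $\eps_0$, and this respects the uniform bound in the first bullet. To make sure this replacement terminates, I would first try defining the outer scale as the \emph{largest} scale $\lambda_k$ for which the neck energy is $\le\eps_0$ and the rescaled limit is either nonflat or has at least two concentration points. This largest scale exists because the inner scale lies below it and admissible scales are closed under the diagonal limits above. With this definition the degenerate case is excluded by construction. If maximality is awkward to set up, the fallback is to import the neck-energy estimate (Theorem \ref{thm:F+estimate} gives exponential decay along the neck, hence vanishing energy in long necks) to carry out the same rescaling argument quantitatively.
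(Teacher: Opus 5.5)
Your treatment of ($a$), ($b$) and the concentration-function construction of the inner scale is the standard argument and is fine; the paper omits this proof as standard, deferring to Parker--Wolfson and Parker. The outer scale is the only non-formal part of ($c$), and your argument for it does not close as written.

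\emph{The center.} Let $m_i=\lim_{r\searrow0}\lim_k\int_{B_r(z_i)}|F_k|^2\,dV\ge\eps_0$. The center of mass of $|F_k|^2dV$ over the fixed ball $B_{r_i}(z_i)$ converges to that of $m_i\delta_{z_i}+|F_{A_\infty}|^2dV$, which is generically not $z_i$. Since $B_{\lambda_k}(x_k)$ must capture the mass $m_i>\eps_0/2$, this forces $\lambda_k\not\to0$.

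\emph{The single concentration point.} Even with a shrinking normalization region, a flat limit with $S'=\{y\}$ is not forced to have $y=0$. Energy up to $\eps_0/2$ sits at rescaled distances as large as $r_i/\lambda_k$, and its first moment is uncontrolled.

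\emph{The scale selection.} The $\delta_k$-shrinking and the ``largest admissible scale'' are not justified. Nothing controls the accumulated $o(1)$ increments. Moreover, the inner scale is not admissible: $B_{\lambda^{\mathrm{in}}_k}(x^{\mathrm{in}}_k)$ captures only $\eps_0/2$, so the energy outside it is about $m_i-\eps_0/2$, typically far above $\eps_0$. Hence the existence of an admissible scale is exactly what is at stake. The underlying problem is that the configuration to be excluded is invisible to every rescaled limit. That configuration is neck energy comparable to $\eps_0/2$ smeared over an unbounded number of dyadic annuli, each carrying $o(1)$. So no scale-selection argument by itself can rule it out.

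\emph{The missing ingredient.} You need the no-neck-energy estimate, and it must be the main step rather than a fallback.
\begin{itemize}
\item Choose $(x_k,\lambda_k)$ minimizing $\lambda$ over all $x\in\overline{B_{r_i/2}(z_i)}$ subject to $\int_{B_{r_i}(z_i)\setminus B_\lambda(x)}|F_k|^2\le\eps_0/2$. Then $\lambda_k\to0$, $x_k\to z_i$, and the first bullet holds.
\item $\varphi_k(B_1(0))=B_{\lambda_k}(x_k)$ carries energy at least $\eps_0/2-o(1)$, so a flat limit has $S'\neq\emptyset$.
\item Suppose $S'=\{y\}$. Every dyadic annulus in $B_{r_i/2}(x_k)\setminus B_{R\lambda_k}(x_k)$ has energy $<\eps_0$. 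Theorem \ref{thm:F+estimate}, applied to $F^+$ and $F^-$ and translated as in Corollary \ref{cor:EuclideanF+est}, gives $|F_k(x)|\le C(\eps_1(R\lambda_k)^2|x|^{-4}+\eps_2r_i^{-2})$ there. Hence this region carries energy at most $C(\eps_1^2+\eps_2^2)$.
\item For $R\gg|y|$ the inner end is a rescaled annulus free of $S'$ on which the limit is flat, so $\eps_1\to0$.
\item $\eps_2^2$ and the energy of $B_{r_i}(z_i)\setminus B_{r_i/2}(x_k)$ tend to energies of $A_\infty$ near $z_i$. These are at most $\eps_0/(8C)$ after shrinking $r_i$.
\item The rescaled energy of $B_R(0)\setminus B_\eta(y)$ tends to $0$. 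So the ball of radius $\eta\lambda_k<\lambda_k$ about $\exp_{x_k}(\lambda_k y)$ leaves less than $\eps_0/2$ outside for large $k$, contradicting minimality.
\end{itemize}
Thus $|S'|\ge2$, and no center-of-mass normalization is needed.
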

\begin{proof}
    The proof is standard and we omit it.
\end{proof}

\begin{defn}\label{defnlemma:attachingmap}
    Let $x^i_k \to z_i$ and $\lambda^i_k \searrow 0$ be any sequence corresponding to an outer scale in a bubble-tree convergent sequence, as in Theorem \ref{thm:standardbubbletree}(c), and let $B_{i,1}$ denote the corresponding bubble connection on $\widehat{T_{z_i}M}.$ After passing to a further subsequence, the \emph{attaching map}
    $$\hat{\rho}_i : (E_{i,1})_\infty \to (E_\infty)_{z_i}$$
    may be defined by the following parallel-transport procedure across the neck region. Choose a unit vector $v \in T_{z_i} M,$ and take 
    \begin{equation}\label{x'x''def}
    (x')^i_k = \exp_{x^i_k} \left( \frac{v}{k} \right), \qquad (x'')^i_k = \varphi_k(kv) = \exp_{x^i_k} \left( k \lambda^i_k v \right).
    \end{equation}
    Define
        $$ \hat{\rho}_i := \lim_{k \to \infty} \mathcal{P}_{\LB (x')^i_k, z_i \RB}^{A_\infty} \circ \left( \rho_k \right)_{(x')^i_k }  \circ \mathcal{P}_{\LB (x'')^i_k, (x')^i_k \RB }^{A_k} \circ \left( \rho'_k \right)_{ kv }^{-1}  \circ \mathcal{P}_{\LB \infty, kv \RB }^{B_{i,1}}.$$
    Here, $\mathcal{P}$ denotes parallel transport, $\LB \infty, kv \RB$ denotes the ray from $\infty \in \widehat{T_{z_i} M}$ to $kv \in T_{z_i} M$ in the $(-v)$-direction, while $\LB (x'')^i_k, (x')^i_k \RB$ and $\LB (x')^i_k, z_i \RB$ both denote geodesic segments in $B_{r_i}(z_i).$
\end{defn}

Note that the expression in the limit is an isometry between fixed vector spaces, so a subsequential limit exists.
The next proposition will show that $\hat{\rho}_i$ %is independent of the vector $v$ and
essentially depends only on the subsequence chosen. %We call it the ``attaching map'' because it records the information of how the outer bubble $B_{i,1}$ connects to $A_\infty$ via the connections $A_k.$
The attaching map is an essential part of the bubbling data: in a gluing procedure (see e.g. \cite[\S 7.2]{donaldson1990}) for the instanton or Yang-Mills equations, different choices of attaching map (a.k.a. ``gluing parameter'') can lead to solutions that are not gauge-equivalent. %and indeed depends only on the subsequence chosen. %The idea is to simply extend the maps $\rho_k$ up to the scales $\lambda^i_k$ using the gauges $\tau_k$ of Lemma \ref{lemma:radialgauge}. While this is the idea, the precise result is unfortunately somewhat awkward to state. Again, this is not an original result.

We take $T_\infty \widehat{T_{z_i}M}$ as the domain of the stereographic chart at infinity in $\widehat{T_{z_i}M}.$ Recall that $\iota$ denotes the inversion map in the unit sphere in $T_{z_i}M$ and $\hat{\iota}$ denotes the extension of $\iota$ to $\widehat{T_{z_i}M},$ which is refection across the equator. The transition map between the two stereographic coordinate charts is given by
\begin{equation}
    \begin{split}
        \iota \circ d \hat{\iota}_0^* : T_\infty \widehat{T_{z_i}M} \setminus \{0\} & \stackrel{\sim}{\longrightarrow} T_{z_i} M \setminus \{0\}.
    \end{split}
\end{equation}
Explicitly, we have coordinates $y^i,$ $i = 1, \ldots, 4,$ on $T_{z_i}M$ and $\tilde{y}^i$ on $T_\infty \widehat{T_{z_i}M}$ 
such that
$$d \hat{\iota}_0^* : (y^1, y^2, y^3, y^4) = (\tilde{y}^1, \tilde{y}^2, \tilde{y}^3, \tilde{y}^4)$$
and
$$\iota \circ d \hat{\iota}_0^*: (y^1, y^2, y^3, y^4) = \frac{1}{|\tilde{y}|^2} \left( \tilde{y}^1, \tilde{y}^2, \tilde{y}^3, \tilde{y}^4 \right).$$
Let $\tilde{B}_{i,1}(\tilde{y})$ be the $\End(E_{i,1})_\infty$-valued connection 1-form on $T_\infty \widehat{T_{z_i}M}$ corresponding to $B_{i,1}$ under radial parallel transport by $B_{i,1}$ from $\infty.$ In this same gauge, the connection form of $B_{i,1}$ on $T_{z_i}M \setminus B_1(0)$ is given by the $\End(E_{i,1})_\infty$-valued 1-form
\begin{equation}
\iota^* \circ d\hat{\iota}_0^* (\tilde{B}_{i,1}).
\end{equation}
Applying the map $\hat{\rho}_i,$  
we obtain the $\End(E_\infty)_{z_i}$-valued 1-form
\begin{equation}
\iota^* \circ \hat{\rho}_i (\tilde{B}_{i,1})
\end{equation}
on $T_{z_0}M \setminus B_1(0),$ which will appear in the statement of the next proposition.

%and $A_\infty$ as a $\gothg$-valued 1-form near $z_i = 0 \in T_{z_i} M.$

\begin{prop}\label{prop:gaugegluing}
    For $r_i$ sufficiently small, after passing to a subsequence, %and $k$ sufficiently large,
    the bundle maps $\rho_k$ can be extended over $M \setminus \cup_i B_{\lambda^i_k}(z_i)$ with the following properties. Identify 
    \begin{equation}\label{gaugegluing:identification}
    \left.E_\infty \right|_{B_{2r_i}(z_i)} \cong B_{2r_i(z_i)} \times (E_\infty)_{z_i}
    \end{equation}
    via radial parallel transport under $A_\infty$ from $z_i,$ and view $A_\infty$ and $\rho_k(A_k)$ as $\End(E_\infty)_{z_i}$-valued 1-forms. We have
    \begin{equation}\label{gaugegluing:rhokAkdecay}
       r \left| \exp_{x^i_k}^*\rho_k(A_k) \right| \leq C \eps \left( \left( \frac{\lambda_k}{r}\right)^2 + \left(\frac{r}{r_i}\right)^2 \right)
    \end{equation}
    for $\lambda_k\leq r\leq r_i,$ while
    \begin{equation}\label{gaugegluing:rhokAkest}
    \rho_k(A_k) \to A_\infty
    \end{equation}
    in $C^\infty_{loc}(B_{r_i}(z_i) \setminus \{0\}),$ and
    \begin{equation}\label{gaugegluing:rhokvarphiAkest}
    \varphi_k^* \rho_k(A_k) \to \iota^* \hat{\rho}_i (\tilde{B}_{i,1})
    \end{equation}
    in $C^\infty_{loc} \left( T_{z_i} M \setminus B_1(0) \right).$
\end{prop}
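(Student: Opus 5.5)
The plan is to build the extension of $\rho_k$ over the neck $B_{r_i}(z_i)\setminus B_{\lambda_k}(x_k)$ from the radial gauge of Lemma \ref{lemma:radialgauge}. We then show that this gauge is compatible with the outer gauge $\rho_k$ from Theorem \ref{thm:standardbubbletree}(a) and with the inner gauge $\rho'_k$ from Theorem \ref{thm:standardbubbletree}(b), up to errors that vanish as $k\to\infty$.

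\textbf{Step 1: curvature decay on the neck.} Pull back by $\exp_{x_k}$ and pass to the cylinder $C_{T_k}$ with $T_k=\frac12(\log r_i-\log\lambda_k)$, exactly as in the proof of Corollary \ref{cor:EuclideanF+est}. The outer-scale property in Theorem \ref{thm:standardbubbletree}(c) gives the energy bound $\eps^2\le\eps_0$ on every unit sub-cylinder. Theorem \ref{thm:F+estimate}, applied to both $F^+$ and $F^-$, then yields
\[
|F_{A_k}|_{\bar g}\le C\eps e^{2(|t|-T_k)}.
\]
Lemma \ref{lemma:radialgauge} therefore gives a gauge $\tau_k$ on the cylinder with $\tau_k(A_k)(\partial_t)=0$ and $|\tau_k(A_k)|_{\bar g}\le C\eps e^{2(|t|-T_k)}$. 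Translating back to Euclidean scale, $|\cdot|_{\bar g}=r|\cdot|_g$, and $e^{2(|t|-T_k)}\approx(\lambda_k/r)^2+(r/r_i)^2$. This is exactly (\ref{gaugegluing:rhokAkdecay}) for $\tau_k(A_k)$, viewed as a $\End(E_\infty)_{z_i}$-valued form once a frame at one point is fixed.

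\textbf{Step 2: choosing the frame.} The gauge $\tau_k$ is determined up to a constant element $g_k\in G$. Since $\tau_k(A_k)$ is radial, its frame is parallel transport of $A_k$ along rays. We fix $g_k$ by requiring that, at the point $(x')^i_k=\exp_{x_k}(v/k)$, the frame agrees with $\rho_k$ followed by $A_\infty$-transport to $z_i$. We then define the extension of $\rho_k$ on the neck to be $\tau_k$ composed with this identification, and we interpolate with the original $\rho_k$ on an annulus $\{r\sim \delta\}$ where the two are $C^\infty$-close.

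Closeness on that annulus is the content of (\ref{gaugegluing:rhokAkest}). Away from $0$, both $\rho_k(A_k)$ and $\tau_k(A_k)$ are close to radial-gauge representatives of $A_\infty$, which is the identification (\ref{gaugegluing:identification}). Two radial gauges for nearby connections that agree at a point differ by a gauge transformation of size controlled by the difference of the connections, via integration of the ODE along rays. One checks that this does not spoil (\ref{gaugegluing:rhokAkdecay}) after possibly enlarging $C$. The estimate of Step 1 is uniform in $k$, so Arzel\`a--Ascoli together with elliptic regularity (Lemma \ref{lemma:splitderivest}) give smooth subsequential convergence on compact subsets of $B_{r_i}\setminus\{0\}$. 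The limit is a radial-gauge form of $A_\infty$ with the correct normalization at $z_i$, which gives (\ref{gaugegluing:rhokAkest}).

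\textbf{Step 3: the inner end.} On $T_{z_i}M\setminus B_1(0)$, the rescaled forms $\varphi_k^*\tau_k(A_k)$ are radial with respect to the origin, not with respect to $\infty$. After composing with the inversion $\iota$, however, they are radial from $\infty$. The same uniform bounds give subsequential $C^\infty_{loc}$ convergence to a radial-from-$\infty$ representative of $B_{i,1}$, which is $\iota^*\circ g(\tilde B_{i,1})$ for some isometry $g:(E_{i,1})_\infty\to(E_\infty)_{z_i}$. It remains to prove $g=\hat\rho_i$. This follows by comparing with Definition \ref{defnlemma:attachingmap}. In our gauge, parallel transport of $A_k$ along the segment $[(x'')_k,(x')_k]$ is the identity, since the gauge is radial and the segment is radial. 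The other factors in the definition of $\hat\rho_i$ converge to the frame normalizations chosen in Step 2 and to $g$.

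\textbf{Main obstacle.} I expect the main obstacle to be Step 3: showing that the frame fixed at the outer end produces exactly $\hat\rho_i$ at the inner end. The approach is to take the limit in the definition of $\hat\rho_i$ and use the fact that the radial gauge makes the neck transport trivial, with the decay bounds controlling the transport along the short arcs $[\infty,kv]$ and $[(x')_k,z_i]$.
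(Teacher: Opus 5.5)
Your proposal is correct and follows essentially the paper's argument. The shared steps are the cylindrical radial gauge $\tau_k$ of Lemma \ref{lemma:radialgauge} with its decay bound, a constant normalization of $\tau_k$ at $(x')^i_k$, constancy of the limiting transition maps at both ends (from radial gauge plus decay), and identification of the inner constant with $\hat{\rho}_i$ via Definition \ref{defnlemma:attachingmap}, using that transport along the radial segment $[(x'')^i_k,(x')^i_k]$ is trivial in $\tau_k$. The differences are only bookkeeping: you patch with the original $\rho_k$ at a fixed radius $\delta$ and control the transition map by integrating along rays, whereas the paper patches on an annulus containing $(x')^i_k$ and cites Donaldson--Kronheimer's ``connections control gauge transformations''.
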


\begin{proof} The connection form of $A_\infty,$ denoted again by $A_\infty,$ is an $\End (E_\infty)_{z_i}$-valued 1-form on $B_{2r_i}(z_i)$ with vanishing radial component. We also identify 
\begin{equation}\label{gaugegluing:Bidentification}
\left.E_{i,1} \right|_{T_{z_i} M \setminus B_1(0)} \cong \left( T_{z_i} M \setminus B_1(0) \right) \times (E_{i,1})_{\infty}
\end{equation}
via parallel transport from $\infty$ by $B_{i,1}.$ In particular, %the connection form of $A_\infty,$ denoted again by $A_\infty,$ is an $\End (E_\infty)_{z_i}$-valued 1-form on $B_{2r_i}(z_i)$ with vanishing radial component, and 
the connection form of $B_{i,1}$ in this trivialization is given by
$$\iota^* \circ d\hat{\iota}_0^* (\tilde{B}_{i,1}),$$
which is an $\End (E_{i,1})_{\infty}$-valued 1-form on $T_{z_i} M \setminus B_1(0)$ with vanishing radial component. 

Let
$$\tau_k : \left. E \right|_{B_{r_i}(x^i_k) \setminus B_{\lambda^i_k}(x^i_k)} \to \left( B_{r_i}(x^i_k) \setminus B_{\lambda^i_k}(x^i_k) \right) \times \R^r$$
be the cylindrical gauge of Lemma \ref{lemma:radialgauge} for $A = A_k.$ We have
$$\tau_k(A_k)\left( \left( \exp_{x^i_k} \right)_* \left( \frac{\p}{\p r} \right) \right) \equiv 0$$
and
\begin{equation}\label{gaugegluing:taukdecayest}
r \left| \exp_{x^i_k}^* \tau_k(A_k) (y) \right| \leq C \eps \left( \left( \frac{\lambda^i_k}{r} \right)^2 + \left( \frac{r}{r_i} \right)^2 \right),
\end{equation}
together with appropriate bounds on all derivatives. 
Furthermore, by multiplying each $\tau_k$ by a constant matrix, we are free to assume that the map
\begin{equation}
	\label{eqn:beta}
    \beta:= (\rho_k)_{(x')^i_k} \circ (\tau_k)_{(x')^i_k}^{-1} : \R^r \to (E_\infty)_{z_i}
\end{equation}
is independent of $k.$ Here $(x')^i_k$ is defined in (\ref{x'x''def}) above.

Because of (\ref{gaugegluing:taukdecayest}), after passing to a subsequence, we can assume that both
\begin{equation*}
    \tau_k(A_k) (x)
\end{equation*}
converges in $C^\infty_{loc}\left( B_{r_i}(z_i) \setminus \{z_i\}, \R^{r\times r} \right)$ 
and
\begin{equation*}
    \varphi_k^* \tau_k(A_k) (y)
\end{equation*}
converges in $C^\infty_{loc}\left( T_{z_i} M \setminus B_1(0), \R^{r \times r} \right).$ But the sequences
\begin{equation}\label{gaugegluing:rhokAktoAinfty}
    \rho_k(A_k) (x) \to A_\infty
\end{equation}
and
\begin{equation}\label{gaugegluing:rhok'AktoB}
    \varphi_k^* \rho'_k(A_k) (y) \to \iota^* \circ d\hat{\iota}_0^* (\tilde{B}_{i,1})
\end{equation}
also converge. By the argument of Donaldson-Kronheimer \cite[Lemma 4.4.7]{donaldson1990} (``Connections control gauge transformations''), we can assume that the sequence of maps
$$(\rho_k)_x \circ (\tau_k)^{-1}_x : \R^r \to (E_\infty)_{z_i}$$
converges in $C^\infty_{loc}\left( B_{r_i}(z_i) \setminus \{z_i\}, \R^r \right)$ to a limit $\rho_\infty,$ and 
$$(\rho_k')_{\varphi_k(y)} \circ (\tau_k)_{\varphi_k(y)} ^{-1}: \R^r \to (E_{i,1})_\infty$$
converges in $C^\infty_{loc}\left( T_{z_i} M \setminus B_1(0) \right)$ to a limit $\rho'_\infty.$ In fact, since the limiting connections are both in radial gauge, $\rho_\infty$ and $\rho'_\infty$ are independent of $r.$ This can be seen by considering the formula for transformation of a connection form, $\sigma(A) = \sigma A \sigma^{-1} - d\sigma \sigma^{-1}.$ Also considering the angular components and using (\ref{gaugegluing:taukdecayest}), we see that $\rho_\infty$ and $\rho'_\infty$ are both constant.

\begin{claim} We have
    \begin{equation}\label{rhoinftyrhorho'}
       \rho_\infty = \hat{\rho}_i \circ \rho'_\infty,
       \end{equation}
       where $\hat{\rho}_i$ is the attaching map of Definition \ref{defnlemma:attachingmap} above.
\end{claim}
\begin{claimproof}
    In the present notation, the attaching map is defined by
    $$\hat{\rho}_i = \lim_{k \to \infty} \left( \rho_k \right)_{(x')^i_k }  \circ \mathcal{P}_{\LB (x'')^i_k, (x')^i_k \RB }^{A_k} \circ \left( \rho'_k \right)_{ kv }^{-1}.$$
    Due to the estimate (\ref{gaugegluing:rhokAkdecay}), we clearly have
    $$ \lim_{k \to \infty} (\tau_k)_{(x')^i_k} \circ \mathcal{P}_{\LB (x'')^i_k, (x')^i_k \RB }^{A_k} \circ (\tau_k)^{-1}_{(x'')^i_k} = \mathbbm{1}.$$
    So the above limit reduces to
    \begin{equation}
        \begin{split}
            \hat{\rho}_i & =  \lim_{k \to \infty} \left( \rho_k \right)_{(x')^i_k }  \circ (\tau_k)^{-1}_{(x')^i_k} \circ (\tau_k)_{(x'')^i_k} \circ \left( \rho'_k \right)_{ kv }^{-1} \\
            & = \rho_\infty \circ \left( \rho'_\infty \right)^{-1},
            \end{split}
            \end{equation}
            as claimed.
\end{claimproof}

Now, by the patching argument of Donaldson-Kronheimer \cite[Lemma 4.4.7/4.4.10]{donaldson1990}, for $k$ sufficiently large, we can glue together $\rho_k$ and $\beta \circ \tau_k$ on an annulus containing $(x')^i_k$ to obtain the desired extension, which we again call $\rho_k.$ This agrees with the original map outside $B_{\frac{2}{k}}(x^i_k)$ and agrees with $\beta \circ \tau_k$ on $B_{\frac{1}{2k}}(x^i_k) \setminus B_{\lambda^i_k}(x^i_k).$ The estimate (\ref{standardbubbletree:aestimate}) is preserved outside $B_{\frac{2}{k}}(x^i_k),$ while (\ref{gaugegluing:taukdecayest}) gives the desired estimate (\ref{gaugegluing:rhokAkdecay}). The convergence statement (\ref{gaugegluing:rhokAkest}) follows from (\ref{gaugegluing:rhokAktoAinfty}), while (\ref{gaugegluing:rhokvarphiAkest}) follows by applying $\hat{\rho}_i$ to both sides of (\ref{gaugegluing:rhok'AktoB}) and using (\ref{rhoinftyrhorho'}).
\end{proof}

\vspace{10mm}

\section{Proof of Theorem \ref{thm:asymptotics}}

\begin{proof}[Proof of Theorem \ref{thm:asymptotics}$a$]
As in Proposition \ref{prop:gaugegluing}, we continue to work under the identifications (\ref{gaugegluing:identification}) and (\ref{gaugegluing:Bidentification}). Let
$$\tilde{F}^+_{B_{i,1}}(\tilde{y}) = \left( d \tilde{B}_{i,1} + \tilde{B}_{i,1} \wedge \tilde{B}_{i,1} \right)^+$$
denote the self-dual curvature of $B_{i,1},$ considered in this gauge as an $\End (E_{i,1})_\infty$-valued self-dual 2-form on $T_\infty \widehat{T_{z_i}M}.$ We have
\begin{equation}
    \tilde{F}^+_{B_{i,1}}(\tilde{y}) = F^+_{B_{i,1}}(\infty) + O(\tilde{y})
\end{equation}
for $|\tilde{y}| \leq 1.$
Applying the transition map $\iota^* \circ d \hat{\iota}_0^*,$ we have
\begin{equation}
	\iota^* \circ d \hat{\iota}_0^* \tilde{F}^+_{B_{i,1}}  = \iota^* \circ d \hat{\iota}_0^* F^+_{B_{i,1}}(\infty) + O\left( \frac{1}{|y|^5} \right)
\end{equation}
for $|y| \geq 1.$ Applying $\hat{\rho}_i,$ we get
\begin{equation}\label{iotahatrhotildeF+B1i}
\iota^* \hat{\rho}_i \left( \tilde{F}^+_{B_{i,1}} \right) = \iota^* \hat{\rho}_i \left( F^+_{B_{i,1}}(\infty) \right) + O\left( \frac{1}{|y|^5} \right).
\end{equation}
Now, in view of (\ref{gaugegluing:rhokvarphiAkest}), we have
\begin{equation}
	\label{eqn:temp}
\varphi_k^* \rho_k \left( F^+_{A_k} \right) \to \iota^* \hat{\rho}_i \left( \tilde{F}^+_{B_{i,1}} \right)
\end{equation}
as $k \to \infty$ in $C^\infty_{loc} \left( T_{z_i} M \setminus B_1(0) \right).$ Comparing the expansion of Corollary \ref{cor:EuclideanF+est} with (\ref{iotahatrhotildeF+B1i}), we can identify $\iota^* \hat{\rho}_i \left( F^+_{B_{i,1}} (\infty) \right)$ as the limit of the leading terms $\iota^*d = \iota^* d_k$ as $k \to \infty,$ as in the statement of Theorem \ref{thm:asymptotics}$a.$ The outer leading term $F^+_{A_\infty}(z_i)$ is identified similarly. %{\color{red} Is it necessary to provide more details?}
\end{proof}

\begin{proof}[Proof of Theorem \ref{thm:asymptotics}$b$]

\begin{claim}
   $\eps'_k = O(\lambda_k^2).$
\end{claim}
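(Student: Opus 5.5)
We prove the claim by exploiting the linearized self-dual equation on the complement of the bubbling regions, where $A_k$ is close to $A_\infty$ and $A_\infty$ is unobstructed. Fix $r_i$ small and let $\Omega = M\setminus\cup_i B_{r_i/4}(x^i_k)$. On $\Omega$, write $\rho_k(A_k) = A_\infty + b_k$ with $b_k \to 0$ in $C^\infty(\Omega)$, by Theorem \ref{thm:standardbubbletree}(a) and Proposition \ref{prop:gaugegluing}. Since $F^+_{A_\infty}\equiv 0$, we have
\[
\rho_k(F^+_{A_k}) = D^+_{A_\infty} b_k + (b_k\wedge b_k)^+ .
\]
The Yang-Mills equation $D_{A_k}^* F^+_{A_k}=0$ then becomes
\[
D_{A_\infty}^*\rho_k(F^+_{A_k}) = -*[b_k\wedge \rho_k(F^+_{A_k})] .
\]
The right-hand side is bounded by $o(1)\,|F^+_{A_k}|$. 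So $\omega_k := \rho_k(F^+_{A_k})$ is a self-dual form that is almost harmonic for the operator $D_{A_\infty}+D_{A_\infty}^*$ restricted to $\Omega^{2,+}$.

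The plan is to use the cohomological hypothesis $H^{2,+}_{A_\infty}=0$ to control $\omega_k$ on $\Omega$ by its boundary data on the spheres $\partial B_{r_i/4}(x^i_k)$. The steps are as follows.

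\textbf{Step 1: the boundary bound.} The operator $D^*_{A_\infty}:\Omega^{2,+}\to\Omega^1$ is injective modulo $H^{2,+}_{A_\infty}=0$, with closed range. Cutting off $\omega_k$ with a function $\chi$ supported away from the bubble points and equal to $1$ outside $B_{r_i/2}$ gives the elliptic estimate
\[
\|\chi\omega_k\|_{L^2(M)} \le C\,\|D_{A_\infty}^*(\chi \omega_k)\|_{L^2} \le C\,o(1)\,\|\omega_k\|_{L^2(\Omega)} + C\,\|\omega_k\|_{L^2(\cup_i B_{r_i/2}\setminus B_{r_i/4})}.
\]
To get this, we regard $\chi\omega_k$ as a self-dual form on all of $M$, extended by zero. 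Absorbing the $o(1)$ term, we obtain
\[
\eps'_k \le C \sum_i \|F^+_{A_k}\|_{L^2(B_{r_i/2}\setminus B_{r_i/4}(x^i_k))}.
\]

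\textbf{Step 2: the neck annuli.} We bound $F^+_{A_k}$ on each annulus $B_{r_i/2}\setminus B_{r_i/4}$ using Corollary \ref{cor:EuclideanF+est}, or equivalently Theorem \ref{thm:asymptotics}(a) with $F^+_{A_\infty}(z_i)=0$. At $|x|\sim r_i$ this gives
\[
|F^+_{A_k}| \le C\eps^i_k\frac{(\lambda^i_k)^2}{r_i^4} + |\delta'^i_k| + C\eps'_k\Big(1 + \frac{(\lambda^i_k)^2}{r_i^4}\Big).
\]
For points $z_i$ where every bubble is ASD, the inner boundary value $\eps^i_k$ vanishes, or more precisely decays: there $F^+$ vanishes on the bubble, so $F^+_{A_k}$ on the bubble region is controlled by the neck. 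For those $i$ we must instead argue that the $\eps'_k$-terms only contribute a small multiple of $\eps'_k$ on the annulus. The constant self-dual term $\delta'^i_k$ comes from the outer boundary and is itself bounded by $C\eps'_k$.

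\textbf{Step 3: the main obstacle.} The difficulty is exactly that circularity: the annulus bound contains $C\eps'_k$ with a constant that is not small. To break it, I would shrink the annulus radius to $\sigma r_i$, with $\sigma$ small but fixed. Proposition \ref{prop:generalstardomegaest} shows that the outer-boundary contribution decays like $(|x|/r_i)$ inside, or is the constant mode $c_2$. The constant mode is then paired against the global estimate. Alternatively, I would run Step 1 with the region $\Omega_\sigma = M\setminus \cup B_{\sigma r_i}$ and apply the unobstructedness estimate uniformly in $\sigma$, since the elliptic constant for $\chi_\sigma$ is uniform because $L^2$ norms of constant forms on $B_{\sigma r_i}$ scale like $\sigma^2$. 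This makes the $\eps'_k$-contribution from the annuli equal to $C\sigma^2\eps'_k$, which can be absorbed. What remains is
\[
\eps'_k \le C_\sigma\sum_i \eps^i_k (\lambda^i_k)^2 \le C\lambda_k^2,
\]
where the sum is nonzero only over $i$ having a non-ASD bubble. Hence $\eps'_k = O(\lambda_k^2)$. The bound $|\delta'^i_k|\le C\eps'_k$ from Theorem \ref{thm:asymptotics}(a) then gives the rest of (b).
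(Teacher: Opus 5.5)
Your working argument, the second option in Step 3, is the paper's proof run directly rather than by contradiction. You cut off at radius $\sigma r_i$ and use the global bound $\|\xi\|_{L^2}\le C_0\|D^*_{A_\infty}\xi\|_{L^2}$ on self-dual forms, whose constant is independent of $\sigma$. The paper instead assumes $\eps'_k/\lambda_k^2\to\infty$. Then Theorem \ref{thm:F+estimate} gives $|F^+_{A_k}|\le C\eps'_k/r_i^2$ down to a radius tending to zero, and the cutoff at a fixed small $R$ costs only $CR\,\eps'_k$. You carry the bubble tail as an explicit $C_\sigma\lambda_k^2$ term instead, which is equivalent.

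Two small corrections:
\begin{itemize}
\item The fixed-annulus estimate of Step 1 is circular, as you note, and the first option in Step 3 (``pairing the constant mode against the global estimate'') is not an argument. Both should be dropped.
\item The cutoff error is $C\sigma\,\eps'_k$, not $C\sigma^2\eps'_k$. The factor $\sigma^2$ from the volume of the annulus is partly cancelled by $|\nabla\chi_\sigma|\sim(\sigma r_i)^{-1}$. This is still absorbable.
\end{itemize}

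The step that is genuinely missing is the treatment of points $z_i$ at which all bubbles are ASD. There $\lambda^i_k$ is not controlled by $\lambda_k$, which is a maximum over the other bubble points only. So the inner term $\eps^i_k(\lambda^i_k)^2/(\sigma r_i)^4$ on the cutoff annulus can be neither bounded by $C\lambda_k^2$ nor absorbed, unless you bound $\eps^i_k$ quantitatively by $\eps'_k$. The remark that $F^+$ vanishes on the bubbles only gives $\eps^i_k\to 0$, with no rate. The phrase ``controlled by the neck'' is circular, since the neck is exactly where you are trying to estimate $F^+$.

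The fix, which is what the paper does, is to apply Lemma \ref{lemma:splitepsreg}(a) on the whole ball. Since $A_\infty$ and all bubbles at such a point are ASD, no self-dual energy concentrates there, so $\|F^+_{A_k}\|_{L^2(B_{r_i}(z_i))}<\eps_0$ for $k$ large. Hence $\sup_{B_{r_i/2}(z_i)}|F^+_{A_k}|\le C\eps'_k/r_i^2$. This gives the pointwise bound $C\eps'_k/r_i^2$ on the cutoff annulus at these points (equivalently $\eps^i_k\le C(\lambda^i_k/r_i)^2\eps'_k$). Your absorption then goes through and yields $\eps'_k\le C_\sigma\sum_i\eps^i_k(\lambda^i_k)^2\le C\lambda_k^2$, with the sum over non-ASD points only.
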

\begin{claimproof} Since $\ker D_{A_\infty}^* : \Omega^{2,+}(\gothg_{E_\infty}) \to \Omega^1 (\gothg_{E_\infty})$ is zero by assumption and $D^{+}_{A_\infty} D_{A_\infty}^*$ is elliptic, we know that the Rayleigh quotient
\begin{equation}\label{rayleigh}
    \inf_{\xi \neq 0 \in W^{1,2}\left( \Omega^{2,+}(\gothg_{E_\infty}) \right)} \frac{\|D_{A_\infty}^*\xi \|}{\| \xi\|_{L^2}} > 0
    \end{equation}
is positive. We will argue by contradiction to this inequality.

%Let
%    \begin{equation}
%    D_k := \sqrt{ \int_{M \setminus B_R(x_0)} |F^+_{A_k} |^2 \, dV}.
%    \end{equation}
%    Since $\eps'_k \leq D_k,$ it suffices to show that $\frac{D_k}{\lambda_k^2}$ is bounded.
Suppose for contradiction that $\eps'_k / \lambda_k^2 \to \infty.$
    %We use cylindrical coordinates on the cylinders $U_{\lambda_k}^R(x_0).$ %Let $T_k = \log \lambda_k / R.$
    
    First let $z_i \in S$ be such that $F_{B_{i,j}}^+ \not \equiv 0$ for some $j,$ so in particular $\lambda^i_k \leq \lambda_k.$ Theorem \ref{thm:F+estimate} gives
    
    \begin{equation}
        r^2|F^+_{A_k}|_g \leq C \left(\frac{\lambda_k^2}{r^2} + \eps'_k \left( \frac{r}{r_i} \right)^2 \right).
    \end{equation}
    In particular, for 
    \begin{equation}\label{outsideF+estimate:rrange}
    \frac{r_i^2 \lambda_k^2}{\eps'_k} \leq r^4 \leq r_i^4,
    \end{equation}
    where the LHS tends to zero, we have
            \begin{equation}\label{outsideF+estimate:Dkest}
        |F^+_{A_k} |_g \leq  \frac{C \eps'_k}{r^2_i} .
    \end{equation}
    Next, let $z'_i$ be an ASD bubble point, i.e., $F^+_{B_{i,j}} \equiv 0$ for all $j.$ We then have $\|F^+_{A_k} \|_{L^2(B_{r_i}(z'_i))} < \eps_0$ for $k$ sufficiently large, and Lemma \ref{lemma:splitepsreg} gives
    \begin{equation}\label{outsideF+estimate:F+supbound}
        \|F^+_{A_k} \|_{L^\infty(B_{r_i/2}(z'_i))} \leq \frac{C}{r_i^2} \|F^+_{A_k} \|_{L^2(B_{r_i} \setminus B_{r_i/2}(z'_i))} \leq \frac{C \eps'_k}{r_i^2}.
    \end{equation}
    In particular, (\ref{outsideF+estimate:Dkest}) also holds for all $r \leq r_i/2$ near %the ASD bubble point
    $z'_i.$

    Let $0 < R < \min \{r_i\}/2,$ to be determined below. We can choose $k$ sufficiently large so that the following are true:
    
 \begin{itemize}
     \item $R$ lies inside the range (\ref{outsideF+estimate:rrange}) %for each possibly SD bubble point

     %\item $\rho_k \to 0$

     \item $\lambda_k^i \leq R/2$ for all $i$ %the bundle isometry $\rho_k : E_\infty \to E$ as given in the bubbling analysis is defined on $M \setminus B_{R}$

     \item The difference $\| \rho_k(A_k) - A_\infty \|_{L^\infty \left( M \setminus \cup_i B_{R}(z_i) \right)} $ is arbitrarily small. 
 \end{itemize}   
    Let $\varphi_R(r)$ be a radial cutoff supported in $M \setminus \cup_i B_{R/2}(z_i)$ and with $\varphi_{R} \equiv 1$ on $M \setminus \cup_i B_{R}(z_i)$ and $|\nabla \varphi_{R} | < 4 / R.$ %We assume that $k$ is sufficiently large that $\rho$ lies inside the range (\ref{outsideF+estimate:rrange}), and moreover, the bundle isometry $u_k : E_\infty \to E$ as given in the bubbling analysis is defined on $M \setminus B_{\rho}.$
    We then consider the sequence of $\gothg_{E_\infty}$-valued self-dual $2$-forms
    $$\xi_k = \varphi_R \rho_k(F^+_{A_k})$$
    supported in $M \setminus \cup_i B_R(z_i).$
    We have
    \begin{equation}\label{outsideF+estimate:Dkbelowxik}
    \eps'_k \leq \| \xi_k \|_{L^2(M)} % \leq C D_k
    \end{equation}
    and 
    \begin{equation}
        \begin{split}
                D_{A_\infty}^* \xi_k & = \nabla \varphi_R \# \rho_k(F^+_{A_k}) + \varphi_R D_{A_\infty}^* \rho_k(F^+_{A_k}) \\
                & = \nabla \varphi_R \rho_k(F^+_{A_k}) + \varphi_R \left( \rho_k(A_k) - A_\infty \right) \# \rho_k(F^+_{A_k}) + \varphi_R \rho_k \left( D_{A_k}^* F^+_{A_k} \right) \\
                & = \nabla \varphi_R \rho_k(F^+_{A_k}) + \varphi_R \left( \rho_k(A_k) - A_\infty \right) \# \rho_k(F^+_{A_k}).
        \end{split}
    \end{equation}
    Applying (\ref{outsideF+estimate:Dkest}), this gives
    \begin{equation}
    \begin{split}
    \|D^*_{A_\infty} \xi_k \|^2_{L^2(M)} & \leq \int_{M \setminus \cup_i B_{R}(z_i) } \left( | \nabla \varphi_{R} |^2 + \| \rho_k(A_k) - A_\infty \|^2_{L^\infty} \right) |F^+_{A_k}|^2 \, dV \\
					 & \leq \sum_i \int_{R}^{2R} \frac{C (\eps'_k)^2 r^3}{R^2 r_i^4} \, dr + \| \rho_k(A_k) - A_\infty \|^2_{L^\infty(M \setminus B_R)} \frac{C (\eps'_k)^2}{r_i^4} \\
    & \leq \frac{C}{\min r_i^4} \left( R^2  + \| \rho_k(A_k) - A_\infty \|^2_{L^\infty(M \setminus B_R) }\right) (\eps'_k)^2.
    \end{split}
    \end{equation}
    But then the ratio $\|D^*_{A_\infty} \xi_k \|_{L^2(M)} / \eps'_k$ can be made arbitrarily small by taking $R$ small and $k$ large.
    Together with (\ref{outsideF+estimate:Dkbelowxik}), this contradicts (\ref{rayleigh}).
\end{claimproof}
Theorem \ref{thm:asymptotics}$b$ now follows by plugging the claim into Theorem \ref{thm:asymptotics}$a.$
\end{proof}

\vspace{10mm}

\section{Proof of Theorem \ref{thm:main}}
\label{sec:proof}

Let $0 < R < \min \{r_i\},$ to be determined, and write
$$
\Omega_R = M\setminus \bigcup_{i=1}^m B_R( z_i).
$$
Let $z_1$ be the point where the bubbles (including $B_{1,1}$) are not assumed to be ASD; recall that for $i > 1,$ all bubbles at $z_i$ are assumed to be ASD. 
By Lemma \ref{lemma:F+stokes}, since $\rho_k(A_k)$ is Yang-Mills, we get
$$
\int_{S^3_R(z_1)} {\rm Tr} \rho_k(F_{A_k}^+) \wedge a + \sum_{i > 1} \int_{S^3_R(z_i)} {\rm Tr} \rho_k(F_{A_k}^+) \wedge a = \frac{1}{2} \int_{\Omega_R} \langle \rho_k(F_{A_k}^+), D_{\rho_k(A_k)}^+ a \rangle.
$$
%{\color{red} Here I deleted the minus sign because $S^3_R$ got the opposite orientation b/c of using the inward normal vector.} 
Theorem \ref{thm:main} then follows from the following three claims:
\begin{equation}
	\label{eqn:partone}
	\lim_{R \searrow 0} \lim_{k\to \infty } \lambda_k^{-2} \int_{S^3_R(z_1)} {\rm Tr} \rho_k(F_{A_k}^+) \wedge a = \frac12 \pi^2 \langle \hat{\rho}(F_{B_{j,1}}^+(\infty )), D_{A_\infty} a(z_1) \rangle
\end{equation}
for the distinguished bubble point $z_1,$
\begin{equation}
	\label{eqn:part1.5}
	\lim_{R \searrow 0} \lim_{k\to \infty } \lambda_k^{-2} \int_{S^3_R(z_i)} {\rm Tr} \rho_k(F_{A_k}^+) \wedge a = 0
\end{equation}
for any ASD bubble point $z_i$ with $i > 1,$
and
\begin{equation}
	\label{eqn:parttwo}
	\lim_{k\to \infty } \lambda_k^{-2} \int_{\Omega_R}  \langle F_k^+, D_{A_k}^+ a \rangle =0.
\end{equation}
The equation (\ref{eqn:part1.5}) follows from the bound (\ref{outsideF+estimate:F+supbound}), which gives
$$\lambda_k^{-2} \int_{S^3_R(z_i)} {\rm Tr} \rho_k(F_{A_k}^+) \wedge a = O(R^3).$$
The equation (\ref{eqn:parttwo}) follows by writing $$D_{\rho_k(A_k)} = D_{A_\infty} + \left( \rho_k(A_k) - A_\infty \right) \#,$$
where the remainder term tends to zero, and using
$$\|F^+_{A_k}\|_{L^\infty(\Omega_R)} \lesssim \eps'_k \lesssim \lambda_k^2$$
by Theorem \ref{thm:asymptotics}$b$ and Lemma \ref{lemma:splitepsreg}.

It remains to prove (\ref{eqn:partone}). We work in the radial gauge determined by $A_\infty$ and view $\hat{\rho}_1\left( F^+_{B_{1,1}}(\infty) \right)$ as a constant $\End(E_\infty)_{z_1}$-valued ASD 2-form on $\R^4 \cong T_{z_1} M.$ We denote the $\End(E_\infty)_{z_1}$-valued 1-form $a$ in this gauge by the same letter. For $\lambda_k \leq r \leq 1,$ Theorem \ref{thm:asymptotics}$b$ gives
$$\exp_{x^i_k}^* \rho_k(F^+_{A_k}) = \lambda_k^2 \left( \iota^* \hat{\rho}_1 \left( F^+_{B_{1,1}}(\infty) \right) +  \gamma_k \right),$$
where
$$|\gamma_k| \lesssim \frac{|\delta_k|}{r^4}  + \frac{ \lambda^i_k}{|x|^5} + \frac{\eps^i_k + K \lambda_k^2}{|x|^2} .$$
%\blue{Should say $a$ is now one-form in this gauge. This gauge is special for $A_\infty$ so that later we will use: $da(z_1)= D^-_{A_\infty}a(z_1).$}
Note that
$$ \left. \iota^* \hat{\rho}_1\left( F^+_{B_{1,1}}(\infty) \right) \right|_{S^3_R} = \frac{1}{R^4} \left. \hat{\rho}_1\left( F^+_{B_{1,1}}(\infty) \right) \right|_{S^3_R}.$$

\noindent Since the integral (\ref{eqn:partone}) is over $S^3_R,$ we obtain
\begin{equation}
\begin{split}
\frac{1}{\lambda_k^2} \int_{S^3_R} {\rm Tr} \rho_k \left( F^+_{A_k} \right) \wedge a 
& = \int_{S_R^3} \Tr \left( \iota^* \hat{\rho}_1 \left(  F^+_{B_{1,1}}(\infty) \right) + \gamma_k \right) \wedge a. \\
& = \frac{1}{R^4} \int_{S_R^3} \Tr \hat{\rho}_1 \left(F^+_{B_{1,1}}(\infty) \right) \wedge a + \int_{S_R^3} \gamma_k \wedge a.
\end{split}
\end{equation}
Since $\delta_k, \lambda^i_k \to 0$ as $k \to \infty,$ we conclude that
\begin{equation}\label{almostdone}
\begin{split}
\lim_{k \to \infty} \frac{1}{\lambda_k^2} \int_{S^3_R} {\rm Tr} \rho_k \left( F^+_{A_k} \right) \wedge a 
& = \frac{1}{R^4} \int_{S_R^3} \Tr \hat{\rho}_1 \left(F^+_{B_{1,1}}(\infty) \right) \wedge a + O(R).
\end{split}
\end{equation}
To calculate the leading term, we use Stokes's theorem over $B_R:$
%\blue{This notation $B^4_R$: is it researved for balls in $R^4$? we use $B_R$ elsewhere. If you decide to change something, let me know. This appears also in later proof.}
\begin{equation}
    \begin{split}
        \int_{S_R^3} \Tr \hat{\rho}_1 \left(F^+_{B_{1,1}}(\infty) \right) \wedge a & = \int_{B_R} d \Tr \hat{\rho}_1 \left(F^+_{B_{1,1}}(\infty) \right) \wedge a \\
        & = \int_{B_R} \Tr \hat{\rho}_1 \left(F^+_{B_{1,1}}(\infty) \right) \wedge d a \\
        & = \frac{\pi^2}{2} R^4 \left( \LA \hat{\rho}_1\left( F^+_{B_{1,1}}(\infty) \right), D^-_{A_\infty} a(z_1) \RA + O(R) \right).
    \end{split}
    \end{equation}
    %Here we have used $D_{A_\infty} = d + A_\infty \#$ and $|A_\infty| = O(r)$ in radial gauge.
Returning to (\ref{almostdone}), we may now pass to the limit $R \to 0$ to obtain the desired result (\ref{eqn:partone}). \qed

\vspace{10mm}

\section{Proof of Theorem \ref{thm:next}}
\label{sec:provenext}

%Let $A_k$ be a sequence of Yang-Mills connections satisfying the assumptions of Theorem \ref{thm:next}. Let's assume that the energy concentration set is away from the south pole so that we can use the sterographic projection $\varphi$ from $S^4\setminus \set{\text{south pole}}$ to $\mathbb R^4$ (that takes the equator to the unit sphere in $\mathbb R^4$ and the north pole to the origin) to define a new sequence $\tilde{A}_k:= (\varphi^{-1})^* A_k$ of Yang-Mills connection on $\mathbb R^4$ and denote the weak limit by $\tilde{A}_\infty.$ Since the property of being Yang-Mills connection or being an instanton is invariant under conformal transformations, we may study $\tilde{A}_k$ instead of $A_k$ for the proof of Theorem \ref{thm:next}.

Let $A_k$ be the sequence of Yang-Mills connections in Theorem \ref{thm:next}. Based on Uhlenbeck's compactness theory, one can construct a bubble tree structure (see ``the bubble tree diagram" in Figure 2 of \cite{parker1996bubble}). The vertices of the tree correspond to $\mathcal{B},$ and $A_\infty$ is the root. As assumed in Theorem \ref{thm:next}, exactly one vertex represents the unique SD bubble, which we denote by $B_+$ below. Other vertices represent either non-trivial ASD connections, or ghost bubbles. The first step for the proof of Theorem \ref{thm:next} is to pullback $A_k$ by some conformal transformation of $S^4$ (depending on $k$) such that in the bubble tree mentioned above for the new sequence, the root $A_\infty$ is a nontrivial ASD connection, and all vertices (other than $A_\infty$ and $B_+$) on the unique path connecting $A_\infty$ and $B_+$ represent ghost bubbles.

%The property of being Yang-Mills connection or being an instanton is invariant under conformal transformations. Theorem \ref{thm:next} is exclusively about $S^4$, which has a very large group of conformal transformations. For a sequence of Yang-Mills connections $A_k$ satisfying the assumptions of Theorem \ref{thm:next}, by considering the pullback connections by some well-chosen sequence of conformal transformations, we may assume that:
%\begin{itemize}
%	\item the weak limit $A_\infty$ is a nontrivial ASD connection;
%	\item $B_+$ is the only SD bubble (obtained from a scaling near the north pole);
%	\item there is no energy concentration at the south pole;
%	\item in the bubble tree, all bubbles \blue{between} $B_+$ and $A_\infty$ are ghost bubbles.
%\end{itemize}

By modifying the above sequence of conformal transformations if necessary, we may further assume that
\begin{itemize}
	\item $B_+$ is obtained from a scaling of $A_k$ by scales $\lambda_k\searrow 0$ centered at the north pole;
	\item there is no energy concentration at the south pole.
\end{itemize}
Let $\varphi$ be the stereographic projection from $S^4\setminus \set{\text{south pole}}$ to $\mathbb R^4$ (that takes the equator to the unit sphere in $\mathbb R^4$ and the north pole to the origin). For the proof of Theorem \ref{thm:next}, it suffices to consider the sequence $\tilde{A}_k:= (\varphi^{-1})^* A_k$ of Yang-Mills connection on $\mathbb R^4$. Denote the weak limit by $\tilde{A}_\infty.$
%It corresponds to a sequence of pairs $(x^1_k, \lambda^1_k)$ satisfying $x^1_k\to 0\in \mathbb R^4$ and $\lambda^1_k\searrow 0$. By adjusting the conformal transformations above, we may assume that $x^1_k\equiv 0$ for any $k$. 
%Assume that a sequence of pairs $(x_k, \lambda_k)$ corresponds to a ghost bubble in the bubble tree (for the sequence $\tilde{A}_k$). If it lies between $\tilde A_\infty$ and $B_+$, then
%$$
%	x_k\to 0; \quad \lambda_k\searrow 0; \quad \abs{x_k}\leq C \lambda_k; \quad \frac{\lambda^1_k}{\lambda_k} \to 0.
%$$
%Due to $\abs{x_k}\leq C\lambda_k$, we may replace $x_k$ by $0$, which will result in a reparametrization of the ghost bubble. This would change the energy concentration set by a translation and the origin will be one of the energy concentration points since we have $x^1_k=0$ and $\frac{\lambda^1_k}{\lambda_k} \to 0$. These apply to all ghost bubbles between $B_+$ and $\tilde{A}_\infty$.
By some abuse of notation, we still denote by $B_+$ the only SD bubble in the bubble tree for the sequence $\tilde{A}_k.$

%We recap the above assumptions and establish the notation:
It is not hard to see that the sequence described above (using Parker's algorithm or any equivalent one) satisfies the following properties, which are sufficient for our purposes.
\begin{enumerate}
	\item $\tilde A_k$ converges smoothly locally (up to gauge transformation) away from a finite set $S$ (with $0\in S$) to the weak limit $\tilde A_\infty$, which is a nontrivial ASD instanton;
	\item \label{as:2} there exists a sequence $\mu^{(0)}_k\searrow 0$ such that, under the scaling map $\varphi_k(y)= \mu^{(0)}_k y$ of $\mathbb R^4$, the Uhlenbeck limit of $\varphi_k^*(\tilde A_k)$ is the \textit{only} SD bubble $B_+$;
	\item \label{item:3} for any sequence $\mu_k$ satisfying 
		$$
		\mu_k\searrow 0 \quad \text{and} \quad \frac{\mu^{(0)}_k}{\mu_k}\searrow 0,
		$$
	if $\psi_k(y)=\mu_k y$, then the Uhlenbeck limit of $\psi_k^*(\tilde A_k)$ is a trivial connection. Note that for the sequence $\psi^*_k(\tilde A_k)$, $0$ is always an energy concentration point;
	\item there are $l$ sequences of positive numbers $\set{ (\mu_k^{(j)})}_{j=1}^l$ satisfying
		$$
		\frac{\mu^{(j-1)}_k}{\mu^{(j)}_k}\searrow 0, \quad \text{for } j=1,\dots ,l \quad \text{and} \quad  \mu^{(l)}_k\searrow 0,
		$$
        as follows;
	\item let $\psi^{(j)}_k(y)= \mu^{(j)}_k y$, then for the sequence $(\psi^{(j)}_k)^*\tilde A_k$, there are energy concentration points other than $0$. In this case, the trivial Uhlenbeck limit of $(\psi^{(j)}_k)^*\tilde A_k$ is called a ghost bubble;
	\item these are the only ghost bubbles between $\tilde A_\infty$ and $B_+$ in the sense that if for some other sequence $\mu_k\searrow 0$ there are more than one energy concentration points (for the sequence $\psi^*_k (\tilde{A}_k)$ where $\psi_k$ is defined in (\ref{item:3})), then there exists some $j$ in $1,\dots ,l$ such that $\mu_k$ is equivalent to  $\mu^{(j)}_k$, i.e.
		$$
		\sup_k \left( \frac{\mu_k}{\mu^{(j)}_k}+ \frac{\mu^{(j)}_k}{\mu_k} \right) < +\infty.
		$$
\end{enumerate}

The proof of Theorem \ref{thm:next} consists of the following steps, each taking a subsection:
\begin{enumerate}[(a)]
	\item use Corollary \ref{cor:EuclideanF+est} to find an expansion of $F^+_{\tilde A_k}$ in each neck region and introduce the leading coefficients $c^{(j)}_{k}$ and $d^{(j)}_k$;
	\item obtain an upper bound for $F^+_{\tilde A_k}$, which allows us to extract a scaled limit of $F^+_{\tilde{A}_k}$ on each ghost bubble domain; this limit will be shown to be the inversion of an $\mathfrak{su}(2)$-valued constant two-form on $\mathbb R^4$, denoted by $\xi^{(j)}_\infty$;
	\item define what we mean by a \textit{standard curvature tensor} and show that the limit of $d^{(j)}_k$ as $k\to \infty$ is such a tensor;
	\item complete the proof using an argument analogous to that of Theorem \ref{thm:main} together with a formula from the ADHM construction.
\end{enumerate}

\subsection{Expansion on the Necks}

There are $l+1$ necks connecting the chain of ghost bubbles between $B_+$ and $\tilde{A}_\infty$. By setting $\mu^{(l+1)}_k\equiv 1$, they are given by
\begin{equation*}
	\hat N^{(j)} := B_{\delta \mu^{(j)}_k} \setminus B_{\delta^{-1} \mu^{(j-1)}_k}
\end{equation*}
for $j=1,\dots ,l+1$. Here $\delta$ is a small positive constant that depends on the sequence $A_k$ %and the $\epsilon_0$ in Theorem \ref{thm:F+estimate},
but not on $k$.  We also need the scaling of these domains: recalling that $\psi^{(j)}_k(y)= \mu_k^{(j)} y$, set
$$
N^{(j)}= (\psi^{(j)}_k)^{-1} (\hat N^{(j)})= B_{\delta}\setminus B_{\delta^{-1} \lambda^{(j)}_k}
$$
where
$$
	\lambda^{(j)}_k = \frac{\mu^{(j-1)}_k}{\mu^{(j)}_k}.
$$
Set
$$
	A^{(j)}_k := (\psi^{(j)}_k)^{*} \tilde{A}_k
$$
for $j=0,1,\dots ,l+1$. In particular, the limit of $A^{(0)}_k$ is the unique SD bubble and the limit of $A^{(l+1)}_k$ is the weak limit $\tilde A_\infty$ (on $\Real^4$).

Applying the Euclidean version of Theorem \ref{thm:F+estimate}, we get on $N^{(j)}$ for each $j=1, \dots, l+1$: 
$$
\abs{x}^2\abs{F_{A^{(j)}_k}^+} \leq C \left( \varepsilon^{(j)}_2 \abs{x}^2 +  \varepsilon^{(j)}_1\frac{(\lambda^{(j)}_k)^2}{\abs{x}^2}  \right),
$$
where
$$
\varepsilon^{(j)}_1= \norm{F^+_{A^{(j)}_k}}_{L^2(B_{2\lambda^{(j)}_k \delta^{-1}}\setminus  B_{\lambda^{(j)}_k \delta^{-1}})} \quad \text{and} \quad \varepsilon^{(j)}_2= \norm{F^+_{A^{(j)}_k}}_{L^2(B_\delta\setminus B_{\delta/2})}.
$$
By Lemma \ref{lemma:radialgauge}, there exist gauges $\tau^{(j)}_k$ on $N^{(j)}$ (and hence on $\hat N^{(j)}$) such that the connection forms of $A^{(j)}_k$ in these gauges satisfy condition \eqref{eqn:cor23} in Corollary  \ref{cor:EuclideanF+est}, which implies the following expansion for $F^+_{A^{(j)}_k}$ on $N^{(j)}$: 
\begin{equation}
	\label{eqn:oneneck}
	\begin{split}
		\abs{\tau^{(j)}_k(F^+_{A^{(j)}_k}) - \left( c^{(j)}_k + (\lambda^{(j)}_k)^2 \iota^* d^{(j)}_k \right)} &\leq  C\varepsilon^{(j)}_2 \abs{x} + C\varepsilon^{(j)}_1 (\lambda^{(j)}_k)^3 \abs{x}^{-5} \\
														       & + C(\varepsilon^{(j)}_1+ \varepsilon^{(j)}_2) (\lambda^{(j)}_k)^2 \abs{x}^{-2},
	\end{split}
\end{equation}
where $c^{(j)}_k$ and $d^{(j)}_k$ are constant self-dual and anti-self-dual two-forms, respectively, on $\mathbb R^4$, satisfying
$$
	\abs{c^{(j)}_k}\leq C \varepsilon^{(j)}_2 \quad \text{and} \quad \abs{d^{(j)}_k}\leq C \varepsilon^{(j)}_1.
$$

\subsection{Upper bound for $F^+_{\tilde{A}_k}$}

For the sequence $\tilde{A}_k$, the bubble tree structure could be complicated. A vital assumption in Theorem \ref{thm:next} is that $B_+$ is the \textit{only} SD bubble. This assumption combined with Lemma \ref{lemma:splitepsreg} gives us a uniform upper bound for $F^+_{\tilde{A}_k}$ that is valid across neck regions and bubble domains.

Recall that $\iota$ denotes the inversion map in the unit sphere of $\mathbb{R}^4$. By Uhlenbeck's removable singularity theorem, $\iota^* \tilde{A}_k$ is a sequence of Yang–Mills connections on $\mathbb{R}^4$. After inversion, assumption (\ref{as:2}) stated at the beginning of this section implies that
$$
	\norm{F^+_{\iota^* \tilde{A}_k}}_{L^2(B_{(\mu^{(0)}_k)^{-1}\delta })} \leq \varepsilon_0
$$
(for sufficiently large $k$). Here $\varepsilon_0$ is given in Lemma \ref{lemma:splitepsreg} and $\delta$ is small depending on $\varepsilon_0$ and $B_+$.
By Lemma \ref{lemma:splitepsreg}, we get
\begin{equation}
	\label{eqn:inversionbound}
	\sup_{B_{(\mu^{(0)}_k)^{-1}\delta/2}} \abs{F^+_{\iota^* \tilde{A}_k}} \leq C \varepsilon_0 (\mu^{(0)}_k)^2.
\end{equation}
Since the inversion $\iota$ is a conformal map, we obtain
\begin{equation}
	\label{eqn:LinftildeAi}
	\abs{x}^2 \abs{F^+_{\tilde{A}_k}}(x) \leq C \varepsilon_0 \left( \frac{\mu^{(0)}_k}{\abs{x}} \right)^2
\end{equation}
for all $x$ with $\abs{x}> 2\delta^{-1} \mu^{(0)}_k$.

As a direct corollary of \eqref{eqn:LinftildeAi}, we have
\begin{cor}
	\label{cor:bound_di}	
$$
\varepsilon^{(j)}_1 \leq C \varepsilon_0 \left( \frac{\mu^{(0)}_k}{\mu^{(j-1)}_k} \right)^2, \quad \varepsilon^{(j)}_2 \leq C \varepsilon_0 \left( \frac{\mu^{(0)}_k}{\mu^{(j)}_k} \right)^2
$$
and
$$
\abs{c^{(j)}_k} \leq C \varepsilon_0 \left( \frac{\mu^{(0)}_k}{\mu^{(j)}_k} \right)^2, \quad \abs{d^{(j)}_k} \leq C \varepsilon_0 \left( \frac{\mu^{(0)}_k}{\mu^{(j-1)}_k} \right)^2.
$$
\end{cor}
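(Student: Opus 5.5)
The plan is to read the bounds on $\varepsilon^{(j)}_1,\varepsilon^{(j)}_2$ directly off the pointwise estimate \eqref{eqn:LinftildeAi}, transported to the rescaled connections $A^{(j)}_k$. The bounds on $c^{(j)}_k,d^{(j)}_k$ then follow from the estimates $|c^{(j)}_k|\le C\varepsilon^{(j)}_2$ and $|d^{(j)}_k|\le C\varepsilon^{(j)}_1$ already recorded after \eqref{eqn:oneneck}.

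First, rescale. Since $A^{(j)}_k=(\psi^{(j)}_k)^*\tilde A_k$ with $\psi^{(j)}_k(y)=\mu^{(j)}_k y$, the conformally invariant quantity $|x|^2|F^+|$ satisfies
\[
|y|^2|F^+_{A^{(j)}_k}|(y)=|x|^2|F^+_{\tilde A_k}|(x),\qquad x=\mu^{(j)}_k y .
\]
Hence \eqref{eqn:LinftildeAi} gives
\[
|y|^2|F^+_{A^{(j)}_k}|(y)\le C\varepsilon_0\Bigl(\frac{\mu^{(0)}_k}{\mu^{(j)}_k|y|}\Bigr)^2 ,
\]
valid whenever $\mu^{(j)}_k|y|>2\delta^{-1}\mu^{(0)}_k$.

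Next, integrate $|F^+|^2$ over the two annuli. On $B_\delta\setminus B_{\delta/2}$ we have $|y|\sim\delta$. For $k$ large the validity condition holds there, because $\mu^{(0)}_k/\mu^{(j)}_k\to0$ for $j\ge1$. The annulus has volume $\sim\delta^4$, so
\[
\varepsilon^{(j)}_2\le C\delta^{-4}\varepsilon_0\,(\mu^{(0)}_k/\mu^{(j)}_k)^2\cdot\delta^{2}=C\varepsilon_0\,(\mu^{(0)}_k/\mu^{(j)}_k)^2 .
\]
The factor $\delta^{-2}$ is absorbed into $C$, since $\delta$ is fixed independently of $k$.

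On $B_{2\lambda\delta^{-1}}\setminus B_{\lambda\delta^{-1}}$, with $\lambda=\lambda^{(j)}_k=\mu^{(j-1)}_k/\mu^{(j)}_k$, we have $|y|\sim\lambda\delta^{-1}$ and volume $\sim\lambda^4\delta^{-4}$. This gives
\[
\varepsilon^{(j)}_1\le C\varepsilon_0\,\frac{(\mu^{(0)}_k/\mu^{(j)}_k)^2}{\lambda^2}=C\varepsilon_0\Bigl(\frac{\mu^{(0)}_k}{\mu^{(j-1)}_k}\Bigr)^2 .
\]

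The only delicate point is the validity condition on the inner annulus. There $\mu^{(j)}_k|y|\ge\delta^{-1}\mu^{(j-1)}_k$, and we need this to exceed $2\delta^{-1}\mu^{(0)}_k$. For $j\ge2$ this holds for large $k$ because $\mu^{(0)}_k/\mu^{(j-1)}_k\to0$.

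For $j=1$ the two scales $\mu^{(j-1)}_k=\mu^{(0)}_k$ and $\mu^{(0)}_k$ coincide, so \eqref{eqn:LinftildeAi} does not apply as stated. In that case I would either shrink $\delta$ in the definition of the necks relative to the one used in \eqref{eqn:LinftildeAi}, or observe that the claimed bound reads $\varepsilon^{(1)}_1\le C\varepsilon_0$. That bound is immediate from the neck energy hypothesis, namely that $\|F\|_{L^2}$ on neck annuli is at most $\varepsilon_0$. Finally, substituting the bounds on $\varepsilon^{(j)}_1,\varepsilon^{(j)}_2$ into the coefficient estimates yields the bounds on $|c^{(j)}_k|$ and $|d^{(j)}_k|$.
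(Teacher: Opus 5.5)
Your proposal is correct and follows the paper's argument: the paper presents this corollary as a direct consequence of the pointwise bound \eqref{eqn:LinftildeAi}, rescaled to $A^{(j)}_k$ and integrated over the two boundary annuli, with the bounds on $c^{(j)}_k$ and $d^{(j)}_k$ then read off from $|c^{(j)}_k|\le C\varepsilon^{(j)}_2$ and $|d^{(j)}_k|\le C\varepsilon^{(j)}_1$. You also catch something the paper passes over silently: for $j=1$ the inner annulus lies just outside the range $|x|>2\delta^{-1}\mu^{(0)}_k$ where \eqref{eqn:LinftildeAi} holds. Either of your fixes works; the second is the most direct, since that annulus lies in $\{|x|>\delta^{-1}\mu^{(0)}_k\}$, whose self-dual energy is at most $\varepsilon_0$ by the very hypothesis used to derive \eqref{eqn:LinftildeAi}.
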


As another corollary of the upper bound \eqref{eqn:LinftildeAi}, the following lemma shows the existence of some scaled limit of $F^+_{A^{(j)}_k}$ on each ghost bubble.
\begin{lemma}
	\label{lem:ghostbubble} 
	Let $j=1,\dots ,l$.
	For the sequence $A^{(j)}_k$ defined above, let $S^{(j)}$ denote the finite energy concentration set. There exists a sequence of gauges $\eta^{(j)}_k$ defined on $\Omega^{(j)}_{k}:=B_{k}\setminus B_{1/k}(S^{(j)})$ such that
	$$
		\norm{\eta^{(j)}_k (A^{(j)}_k)}_{C^2(\Omega_k^{(j)})} \to 0.
	$$
	Let $\xi^{(j)}_k= \left(\frac{\mu^{(j)}_k}{\mu^{(0)}_k}\right)^{2} \eta^{(j)}_k(F^+_{A^{(j)}_k})$, then $\xi^{(j)}_k$ converges in $C^1_{loc}$ on $\mathbb R^4 \setminus S^{(j)}$ to $\iota^*(\xi^{(j)}_\infty)$, where $\xi^{(j)}_\infty$ is an $\mathfrak{su}(2)$-valued constant anti-self-dual two-form. 
\end{lemma}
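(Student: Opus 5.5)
The plan has four steps: obtain gauges from Uhlenbeck convergence to the flat ghost limit, get a uniform bound on $\xi^{(j)}_k$ from \eqref{eqn:LinftildeAi}, pass to a harmonic limit, and identify that limit via a Liouville argument.

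\textbf{Step 1: gauges.} By construction the rescaled sequence $A^{(j)}_k$ has a trivial (flat) Uhlenbeck limit on $\mathbb R^4$, namely the ghost bubble. Its energy concentrates only on the finite set $S^{(j)}$. By Theorem \ref{thm:standardbubbletree}$b$ there are bundle maps on $\Omega^{(j)}_k$ taking $A^{(j)}_k$ to a connection converging to a flat connection. Since $\mathbb R^4\setminus S^{(j)}$ is simply connected, that flat connection is gauge-trivial. Composing with the trivializing gauge and passing to a subsequence (diagonalizing in $k$), we get gauges $\eta^{(j)}_k$ with $\|\eta^{(j)}_k(A^{(j)}_k)\|_{C^2(\Omega^{(j)}_k)}\to 0$.

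\textbf{Step 2: uniform bound on $\xi^{(j)}_k$.} Rescale \eqref{eqn:LinftildeAi} by $\psi^{(j)}_k$. Conformal invariance gives
$$|x|^2|F^+_{A^{(j)}_k}|(x)\le C\eps_0\Big(\frac{\mu^{(0)}_k}{\mu^{(j)}_k|x|}\Big)^2$$
for $|x|>2\delta^{-1}\lambda$, where $\lambda=\mu^{(0)}_k/\mu^{(j)}_k\to0$. Hence $|\xi^{(j)}_k|(x)\le C\eps_0|x|^{-4}$ on all of $\mathbb R^4$ minus a shrinking ball. Note that no such pointwise bound is available for the full curvature near $S^{(j)}\setminus\{0\}$; the bound holds only for $F^+$, because there the bubbles are all ASD. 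Each $\xi^{(j)}_k$ satisfies $D_{A}F^+=0$ and $D_A^*F^+=0$. In the gauge $\eta^{(j)}_k$ this reads $d\xi_k+[\eta_k(A)\wedge\xi_k]=0$, with coefficients tending to $0$ in $C^2_{loc}(\mathbb R^4\setminus S^{(j)})$. Elliptic estimates for $d+d^*$ then give $C^{1,\alpha}_{loc}$ bounds on $\mathbb R^4\setminus S^{(j)}$, and a subsequence converges in $C^1_{loc}$ to an $\mathfrak{su}(2)$-valued SD form $\xi$. This $\xi$ is closed and coclosed, hence harmonic, and satisfies $|\xi|\le C|x|^{-4}$ on $\mathbb R^4\setminus S^{(j)}$.

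\textbf{Step 3: identifying the limit.} Let $\omega=\iota^*\xi$. It is an $\mathfrak{su}(2)$-valued ASD harmonic 2-form, since inversion is conformal and orientation-reversing, and its coefficients are bounded near $\infty$ on $\mathbb R^4$ by the $|x|^{-4}$ decay of $\xi$. What remains, and what I expect to be the main obstacle, is showing that $\xi$ has no singularities at the points of $S^{(j)}$, including $0$. Near each $p\in S^{(j)}$, however, the bound only gives $|\xi|\le C$ with $C$ depending on $\mathrm{dist}(p,0)$, since \eqref{eqn:LinftildeAi} holds away from $0$ only. At $p\neq0$, $\xi$ is therefore bounded and harmonic on a punctured ball, and it extends smoothly by the removable singularity theorem for harmonic functions applied componentwise; the point $p$ has codimension $4$ and $\xi\in L^\infty$. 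At $0$ the bound $|\xi|\le C|x|^{-4}$ allows a genuine singularity. But after inversion, $0$ becomes $\infty$ and $\omega$ is bounded there. So $\omega$ is a bounded harmonic form on all of $\mathbb R^4$ once the removable images of $S^{(j)}\setminus\{0\}$ are included. By Liouville, $\omega$ is constant; call it $\xi^{(j)}_\infty$. This gives $\xi=\iota^*\xi^{(j)}_\infty$, consistent with \S\ref{rmk:selfdualconstantforms}, where constant ASD forms pull back under $\iota$ to $O(r^{-4})$ harmonic SD forms.

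\textbf{Step 4: full sequence.} Finally, I would check that the limit is independent of the subsequence up to the chosen gauges, or simply accept subsequential convergence, as the paper does elsewhere.
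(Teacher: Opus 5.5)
Your proposal is correct and follows essentially the same route as the paper's proof: gauges from the flat ghost limit, the rescaled bound $|\xi^{(j)}_k|\le C\varepsilon_0|x|^{-4}$ from \eqref{eqn:LinftildeAi}, $C^1_{loc}$ convergence to a self-dual harmonic form by elliptic estimates, removal of the bounded singularities at $S^{(j)}\setminus\{0\}$, and Liouville after inversion. One point should be stated more cleanly: the same global bound $|\iota^*\xi|\le C\varepsilon_0$ also makes the image of $\infty$ under $\iota$, namely the origin, a removable point, and the paper uses this implicitly as well.
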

\begin{proof}
	Since the Uhlenbeck limit of $A^{(j)}_k$ is the trivial connection, the claim on the existence of $\eta^{(j)}_k$ is trivial. By definition, $A^{(j)}_k$ is a scaling of $\tilde{A}_k$ by $\mu^{(j)}_k$. Then it follows from \eqref{eqn:LinftildeAi} that on $\Omega_{k_0}^{(j)}$ (for any $k_0$ fixed),
	$$
		\abs{x}^2 \abs{F^+_{A^{(j)}_k}}(x) \leq C \varepsilon_0 \left( \frac{\mu^{(0)}_k}{\mu^{(j)}_k} \right)^2 \abs{x}^{-2}.
	$$
	This gives an upper bound for $\xi^{(j)}_k$ on $\Omega_{k_0}^{(j)}$:
	\begin{equation}
		\label{eqn:boundXi}
		\abs{x}^2 \abs{\xi^{(j)}_k}(x) \leq C \varepsilon_0 \abs{x}^{-2}.
	\end{equation}
	Note that $\xi^{(j)}_k$ is the representation of an $A^{(j)}_k$-harmonic two-form in the gauge $\eta^{(j)}_k$ and the connection form of $A^{(j)}_k$ in the same gauge vanishes in $C^2$ topology in the limit. Hence, \eqref{eqn:boundXi} implies that $\xi^{(j)}_k$ converges in $C^1_{loc}$ in $\Real^4\setminus S^{(j)}$ to a self-dual harmonic two form, denoted by $\hat \xi^{(j)}_\infty$.

	Being a harmonic form bounded near $S^{(j)}\setminus \set{0}$ (due to \eqref{eqn:boundXi}), $\hat \xi^{(j)}_\infty$ can be smoothly extended to be defined on $\mathbb R^4\setminus \set{0}$. Using again the conformal invariance of harmonic two-form, we find that $\iota^*(\hat \xi^{(j)}_\infty)$ is an anti-self-dual harmonic two-form defined on $\mathbb R^4\setminus \set{0}$, but \eqref{eqn:boundXi} implies that
	$$
		\abs{\iota^* \hat \xi^{(j)}_\infty}\leq C\varepsilon_0
	$$
	on $\mathbb R^4\setminus \set{0}$. Therefore by the Liouville theorem for harmonic functions, we conclude that $\iota^*(\hat \xi^{(j)}_\infty)$ is a constant two-form. The proof of the lemma is done by setting $\xi^{(j)}_\infty = \iota^* (\hat \xi^{(j)}_\infty)$.
\end{proof}

\subsection{Standard curvature tensor}
The $\hat{\rho}_1\left( F^+_{B_+}(\infty) \right)$ in \eqref{eqn:main} is a tensor in the space $\Lambda^{2,-}T_{z_1}^* M \otimes {\rm End}(E_\infty)_{z_1}$. Identifying $T_{z_1}^*M$ with $\Real^4$ and choosing a trivialization of  $(E_\infty)_{z_1}$, it becomes a tensor in
$$
\Lambda^{2,-}\Real^4 \otimes \mathfrak{su}(2).
$$
The same holds for the coefficient $d^{(j)}_k$ in \eqref{eqn:oneneck}. The following definition characterizes a property of such tensors that is important for the proof of Theorem \ref{thm:next}.

\begin{defn}
	A tensor $F^-$ in $\Lambda^{2,-} \otimes \mathfrak{su}(2)$ is defined to be a \textit{standard curvature tensor}, if for some choice of basis $(\i,\j,\k)$ of $\mathfrak{su}(2)$, there exists an orthonormal basis $(e_1,e_2,e_3)$ of $\Lambda^{2,-}$ such that
	$$
		F^- = \lambda \left( e_1\otimes \i + e_2 \otimes \j + e_3 \otimes \k \right)
	$$
	for some $\lambda\in \mathbb R$.
\end{defn}
Notice that the above definition is independent of the choice of $(\i,\j,\k)$ and hence makes sense for the anti-self-dual part of a curvature tensor (such as $\hat{\rho}_1\left( F^+_{B_+}(\infty) \right)$ mentioned above) even without a choice of trivialization.

In light of Corollary \ref{cor:bound_di}, we define
$$
	\tilde{d}^{(j)}_k = \left( \frac{\mu^{(j-1)}_k}{\mu^{(0)}_k} \right)^2 d^{(j)}_k, \quad \tilde{c}^{(j)}_k = \left( \frac{\mu^{(j)}_k}{\mu^{(0)}_k} \right)^2 c^{(j)}_k
$$
and let $d^{(j)}_\infty$ denote the limit of $\tilde{d}^{(j)}_k$ as $k\to \infty.$

The main result of this subsection is
\begin{lemma}
	\label{lem:djinf}
	$d^{(j)}_\infty$ is a nonvanishing standard curvature tensor for $j=1,\dots ,l+1$.
\end{lemma}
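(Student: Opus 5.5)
We argue by induction on $j$, going up the chain from $B_+$ towards $\tilde A_\infty$.

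\emph{Base case $j=1$.} The neck $N^{(1)}$ joins $B_+$, the Uhlenbeck limit of $A^{(0)}_k$, to the first ghost bubble. Rescaling \eqref{eqn:oneneck} to the scale $\mu^{(0)}_k$ and comparing with Theorem \ref{thm:asymptotics}$a$ (applied to $B_+$ as the bubble at the origin), we get
\[
d^{(1)}_\infty=\hat\rho\bigl(F^+_{B_+}(\infty)\bigr),
\]
up to a fixed gauge rotation. We then use that $B_+$ is a unit-charge $\SU(2)$ SD instanton on $S^4$. By Atiyah's explicit description \cite{atiyahbook}, it is the standard 't Hooft/BPST instanton up to conformal transformation and gauge. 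Its curvature at every point, and in particular at $\infty$, is of the form $\lambda(e_1\otimes\i+e_2\otimes\j+e_3\otimes\k)$ with $\lambda\neq0$: under $\Lambda^{2,+}\cong\mathfrak{su}(2)$ it is a nonzero multiple of the identification. Changing the basis of $\mathfrak{su}(2)$ by an isometry and reversing orientation via $d\hat\iota_0$ preserves this form. Hence $d^{(1)}_\infty$ is a nonvanishing standard curvature tensor.

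\emph{Inductive step $j\to j+1$.} Lemma \ref{lem:ghostbubble} gives the limit $\iota^*\xi^{(j)}_\infty$ of the rescaled $F^+$ on the $j$-th ghost bubble. The plan has two parts.
\begin{enumerate}[(i)]
\item Show $\xi^{(j)}_\infty=d^{(j+1)}_\infty$. We compare the expansion \eqref{eqn:oneneck} on the outer neck $N^{(j+1)}$ with the ghost-bubble limit at large $|x|$, as in the proof of Theorem \ref{thm:asymptotics}$a$. The gauges on a ghost bubble converge to constants, because the connection forms tend to zero in $C^2$, so there is no nontrivial attaching map and only a constant gauge rotation appears.
\item Show $\xi^{(j)}_\infty=d^{(j)}_\infty$. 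We match the inner neck $N^{(j)}$, which enters near $0\in S^{(j)}$, with the ghost bubble. Because the limit is trivial, $\hat\xi^{(j)}=\iota^*\xi^{(j)}_\infty$ is a harmonic SD form on $\R^4\setminus\{0\}$, decaying at $\infty$ and smooth across $S^{(j)}\setminus\{0\}$. Near $0$ it is $O(|x|^{-4})$ with leading coefficient given by the limit of $\tilde d^{(j)}_k$. In cylindrical coordinates around $0$, a harmonic SD form that is homogeneous of degree $-4$ and globally defined is exactly $\iota^*$ of a constant. The coefficient read off at $0$ (the $\Theta_{-2}$ mode) must then equal the one read off at $\infty$. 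This matching uses Proposition \ref{prop:generalstardomegaest}$b$.
\end{enumerate}
The key fact is that on a ghost bubble the $F^+$ from $B_+$ propagates as a harmonic form on flat $\R^4$ with no source. The other concentration points in $S^{(j)}$ carry only ASD bubbles, and \eqref{eqn:LinftildeAi} shows they contribute no extra $|x-y|^{-4}$ singularities, which would be of lower order after rescaling.

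\emph{Conclusion and main obstacle.} Combining (i) and (ii) gives $d^{(j+1)}_\infty=d^{(j)}_\infty$ up to constant gauge rotation. The property of being a nonvanishing standard tensor is preserved under such rotations. The main obstacle is the rigorous matching of normalizations across each neck: both scalings $(\mu^{(j)}_k/\mu^{(0)}_k)^2$ in $\xi^{(j)}_k$ and $\tilde d^{(j)}_k$ must be consistent, and the error terms in \eqref{eqn:oneneck}, controlled by Corollary \ref{cor:bound_di}, must vanish in the limit. For $\varepsilon^{(j)}_2$ this follows from \eqref{eqn:LinftildeAi}, which gives an extra factor $(\mu^{(j-1)}_k/\mu^{(j)}_k)^2\to0$. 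I would first verify this error bookkeeping at $|x|\sim1$ on each neck. Nonvanishing then propagates from $j=1$.
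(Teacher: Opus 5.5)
Your proposal is correct and is essentially the paper's argument. The base case identifies $d^{(1)}_\infty$ with $\hat\rho_1(F^+_{B_+}(\infty))$ up to the fixed isometry $\beta$ and uses the explicit unit-instanton curvature. Your steps (ii) and (i) are exactly the paper's Step 1 (matching $\iota^*\xi^{(j)}_\infty$ with $\iota^*d^{(j)}_\infty$ on $B_{2R}\setminus B_R$ as $R\to 0$) and Step 2 (matching it with $\iota^*d^{(j+1)}_\infty$ as $R\to\infty$), with the gauge transitions converging to constant matrices.

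One small correction to your bookkeeping: after $k\to\infty$ the remainders do not vanish. They are $O(R+1+R^{-2})$ in Step 1 (the bounded $\tilde c^{(j)}_k$ and the $\varepsilon^{(j)}_1|x|^{-2}$ term survive) and $O(R^{-5})$ in Step 2. The identification therefore needs the iterated limit in $R$ against the $R^{-4}$ leading term, not a check at $|x|\sim 1$. Also, Proposition \ref{prop:generalstardomegaest}$b$ is not needed, since Lemma \ref{lem:ghostbubble} already shows the ghost limit is globally $\iota^*$ of a constant.
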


\begin{proof}
	By the proof of Theorem \ref{thm:asymptotics}, there is a linear isometry $\beta$ from $\Real^r$ to $E_0$ (see \eqref{eqn:beta}) such that 
	$$
	\hat{\rho}_1 \left(F^+_{B_+}(\infty)\right) = \beta_* ( d^{(1)}_\infty),
	$$
	where $\beta_*$ is the induced isomorphism between $\mathfrak{su}(2)\subset \Real^r\times \Real^r$ and ${\rm End}(E_0)$.
	Since $B_+$ is a one-instanton, then explicit formula shows that $\hat{\rho}_1\left(F^+_{B_+}(\infty)\right)$ is a nonvanishing standard curvature tensor. Hence, so is $d^{(1)}_\infty$.
	
	The proof of the lemma is then reduced to the following claim: if  $d^{(j)}_\infty$ is a nonvanishing standard curvature tensor, then so is $d^{(j+1)}_{\infty}$ for $j=1,2,\dots ,l$.

	The proof of the claim consists of two steps.

	{\bf Step 1.} Fix $R<\delta$. On $B_{2R}\setminus B_R\subset N^{(j)}$, \eqref{eqn:oneneck} implies that
\begin{equation}
	\label{eqn:weightedoneneck}
	\begin{split}
		\abs{\tau^{(j)}_k (F^+_{A^{(j)}_k}) - \left( c^{(j)}_k  + (\lambda^{(j)}_k)^2 \iota^* d^{(j)}_k \right)} & \leq C \varepsilon^{(j)}_2 \abs{x} + C \varepsilon^{(j)}_1 (\lambda_k^{(j)})^3 \abs{x}^{-5} \\
															 & + C(\varepsilon^{(j)}_1 + \varepsilon^{(j)}_2) (\lambda^{(j)}_k)^2 \abs{x}^{-2}.
	\end{split}
\end{equation}
By the definitions of $\eta^{(j)}_k$ and $\xi^{(j)}_k$ in Lemma \ref{lem:ghostbubble}, multiplying the above inequality by $\left( \frac{\mu^{(j)}_k}{\mu^{(0)}_k} \right)^2$ on both sides, we obtain
\begin{eqnarray*}
	\abs{\tau^{(j)}_k (\eta^{(j)}_k)^{-1} \xi^{(j)}_k - \left( \tilde{c}^{(j)}_k + \iota^* \tilde{d}^{(j)}_k \right)} &\leq& C \varepsilon^{(j)}_2 \left( \frac{\mu^{(j)}_k}{\mu^{(0)}_k} \right)^2 \abs{x} + C \varepsilon^{(j)}_1 \left( \frac{\mu^{(j)}_k}{\mu^{(0)}_k} \right)^2 (\lambda_k^{(j)})^3 \abs{x}^{-5} \\
															  && + C (\varepsilon^{(j)}_1 + \varepsilon^{(j)}_2) \left( \frac{\mu^{(j-1)}_k}{\mu^{(0)}_k} \right)^2 \abs{x}^{-2}.
\end{eqnarray*}
By Corollary \ref{cor:bound_di}, taking the limit $k\to \infty$, we get on $B_{2R}\setminus B_R$
$$
	\abs{\lim_{k\to \infty}\tau^{(j)}_k (\eta^{(j)}_k)^{-1} \xi^{(j)}_k -  \iota^* d^{(j)}_\infty } \leq CR + C + C R^{-2}.
$$
Recall that $\xi_k^{(j)}$ converges to $\hat \xi^{(j)}_\infty = \iota^* \xi^{(j)}_\infty$, which is of order $R^{-4}$ on $B_{2R}\setminus B_R$. The same estimate holds for $\iota^* d^{(j)}_\infty$. Compared with $R^{-4}$, the right-hand side is negligible as $R\to 0$.

Note that by the definition of ghost bubble, the connection form of $A^{(j)}_k$ in gauge $\eta^{(j)}_k$ over $B_{2R}\setminus B_R$ converges to zero as $k\to \infty$ and by the construction of $\tau^{(j)}_k$ (see Lemma \ref{lemma:radialgauge}), the connection form of the same connection in gauge $\tau^{(j)}_k$ over $B_{2R}\setminus B_R$ converges to zero by passing $k\to \infty$ and then $R\to 0$. Hence, $\lim_{R\searrow 0} \lim_{k\to \infty}\tau^{(j)}_k (\eta^{(j)}_k)^{-1} $ is a constant linear transformation, which implies that if $d^{(j)}_\infty$ is a nonvanishing standard curvature tensor, so is $\xi^{(j)}_\infty$.

{\bf Step 2.} We derive the following on $B_{\delta (\lambda^{(j+1)}_k)^{-1}}\setminus B_{\delta^{-1}}$ from \eqref{eqn:oneneck}:
\begin{equation}
	\label{eqn:upNj+1}
	\begin{split}
		\abs{\tau^{(j+1)}_k (F^+_{A^{(j)}_k}) - \left( (\lambda^{(j+1)}_k)^2 c^{(j+1)}_k + \iota^* d^{(j+1)}_k \right)} & \leq C\varepsilon^{(j+1)}_2(\lambda^{(j+1)}_k)^3 \abs{x} + C\varepsilon^{(j+1)}_1 \abs{x}^{-5} \\
																& + C(\varepsilon^{(j+1)}_1 + \varepsilon^{(j+1)}_2) (\lambda^{(j+1)}_k)^2 \abs{x}^{-2}.
	\end{split}
\end{equation}
In fact, we consider the pullback of \eqref{eqn:oneneck} by the map $\psi: B_{\delta (\lambda^{(j+1)}_k)^{-1}}\setminus B_{\delta^{-1}}\to N^{(j+1)}$ defined by $\psi(y)= \lambda^{(j+1)}_k y$. \eqref{eqn:upNj+1} follows from the following facts: 
\begin{itemize}
	\item $\psi^* F^+_{A^{(j+1)}_k}= F^+_{A^{(j)}_k}$;
	\item $\psi^*( c^{(j+1)}_k) = (\lambda^{(j+1)}_k)^2 c^{(j+1)}_k$;
	\item $\psi^* \iota^* d^{(j+1)}_k = (\lambda^{(j+1)}_k)^{-2} \iota^* d^{(j+1)}_k$;
	\item for any two form $\alpha$ on $N^{(j+1)}$, $\abs{\psi^* \alpha} = (\lambda^{(j+1)}_k)^2 \abs{\alpha}$.
\end{itemize}

Fix $R>\delta^{-1}$. On $B_{2R}\setminus B_R$, the definition of $\xi^{(j)}_k$ implies that
$$
\left(\frac{\mu^{(j)}_k}{\mu^{(0)}_k}\right)^{2} \eta^{(j)}_k(F^+_{A^{(j)}_k}) \to \iota^* (\xi^{(j)}_{\infty}).
$$
Multiplying \eqref{eqn:upNj+1} by $\left(\frac{\mu^{(j)}_k}{\mu^{(0)}_k}\right)^{2}$, the definitions of $\tilde{c}^{(j+1)}_k$ and $\tilde{d}^{(j+1)}_k$ imply that on $B_{2R}\setminus B_R$
\begin{equation}
	\label{eqn:upNj+1again}
	\begin{split}
	& \abs{\tau^{(j+1)}_k ( \eta^{(j+1)}_k)^{-1} \xi^{(j)}_k - \left( (\lambda_k^{(j+1)})^4\tilde{c}^{(j+1)}_k + \iota^* \tilde{d}^{(j+1)}_k \right)}\\
		\leq&  C \varepsilon^{(j+1)}_2 \left(\frac{\mu^{(j+1)}_k}{\mu^{(0)}_k}\right)^{2} (\lambda_k^{(j+1)})^5 R + C \varepsilon^{(j+1)}_1 \left(\frac{\mu^{(j)}_k}{\mu^{(0)}_k}\right)^{2} R^{-5} \\
		    & + C(\varepsilon^{(j+1)}_1 + \varepsilon^{(j+1)}_2) \left( \frac{\mu^{(j)}_k}{\mu^{(0)}_k} \right)^2 (\lambda^{(j+1)}_k)^2 R^{-2}.
	\end{split}
\end{equation}
By taking $k\to \infty$ first (see Corollary \ref{cor:bound_di}), we get
\begin{equation*}
	\abs{\lim_{k\to \infty}\left( \tau^{(j+1)}_k ( \eta^{(j+1)}_k)^{-1} \xi^{(j)}_k -  \iota^* \tilde{d}^{(j+1)}_k \right)} \leq C  R^{-5}.
\end{equation*}
As before, $\tau^{(j+1)}_k ( \eta^{(j+1)}_k)^{-1} $ converges to some constant matrix and $\iota^* \xi^{(j)}_k$ converges to $\xi^{(j)}_\infty$. Hence, by taking $R\to \infty$, the fact that $\xi^{(j)}_\infty$ is a nonvanishing standard curvature tensor implies that so is $d^{(j+1)}_\infty$.
\end{proof}

\subsection{Completion of the proof}
This final step of proof is similar to the proof of Theorem \ref{thm:main}.

Let $A_k$ be the sequence in Theorem \ref{thm:next}. Denote the set of energy concentration points by $S=\set{z_i}$. By the assumptions that were made as the beginning of this section, the north pole is $z_1$. Set
$$
	\Omega_R = S^4 \setminus B_R(S).
$$
Recall that $\rho_k$ is the bundle map such that $\rho_k(A_k)$ converges smoothly on $\Omega_R$ (for any $R$) to the weak limit $A_\infty$. Hence, by Lemma \ref{lemma:F+stokes}, we get for any $a\in \Omega^1(\mathfrak{g}_E)$
$$
\int_{S^3_R(z_1)} {\rm Tr} \rho_k(F_{A_k}^+) \wedge a + \sum_{i \neq 1} \int_{S^3_R(z_i)} {\rm Tr} \rho_k(F_{A_k}^+) \wedge a = \frac{1}{2} \int_{\Omega_R} \langle \rho_k(F_{A_k}^+), D_{\rho_k(A_k)}^+ a \rangle.
$$
Again, if $a\in {\rm Ker} D^+_{A_\infty}$, we need the following three claims:
\begin{equation}
	\label{eqn:claim1}
	\lim_{R \searrow 0} \lim_{k\to \infty } (\mu^{(0)}_k)^{-2} \int_{S^3_R(z_1)} {\rm Tr} \rho_k(F_{A_k}^+) \wedge a = \frac12 \pi^2 \langle \beta_* (d^{(l+1)}_\infty), D_{A_\infty} a(z_1) \rangle
\end{equation}
where $\beta$ is a linear isometry between $\Real^r$ and $E_0$,
\begin{equation}
	\label{eqn:claim2}
	\lim_{R \searrow 0} \lim_{k\to \infty } (\mu^{(0)}_k)^{-2} \int_{S^3_R(z_i)} {\rm Tr} \rho_k(F_{A_k}^+) \wedge a = 0
\end{equation}
for any ASD bubble point $z_i,$ with $i \neq 1,$
and
\begin{equation}
	\label{eqn:claim3}
	\lim_{k\to \infty } (\mu^{(0)}_k)^{-2} \int_{\Omega_R}  \langle F_k^+, D_{A_k}^+ a \rangle =0.
\end{equation}
For the proof, recall that \eqref{eqn:LinftildeAi} implies the following upper bound for $F^+_{A_k}$:
$$
	\sup_{B_{2R}(z_1)\setminus B_{R/2}(z_1)} \abs{F^+_{A_k}} (\mu^{(0)}_k)^{-2} \leq C
$$
for some constant depending on $R$. Applying Lemma \ref{lemma:splitepsreg} on $S^4$, we get
$$
	\sup_{S^4\setminus B_R(z_1)} \abs{F^+_{A_k}} (\mu^{(0)}_k)^{-2} \leq C.
$$
Then \eqref{eqn:claim2} and \eqref{eqn:claim3} follows from the above inequality by a similar argument in the proof of Theorem \ref{thm:main}.

Recall the expansion \eqref{eqn:oneneck} for $j=l+1$, keeping in mind that $A^{(l+1)}_k=\tilde{A}_k$
\begin{equation*}
	\begin{split}
		\abs{\tau^{(l+1)}_k(F^+_{\tilde{A}_k}) - \left( c^{(l+1)}_k + (\lambda^{(l+1)}_k)^2 \iota^* d^{(l+1)}_k \right)} & \leq  C \varepsilon^{(l+1)}_2 \abs{x} +  C\varepsilon^{(l+1)}_1 (\lambda^{(l+1)}_k)^3 \abs{x}^{-5} \\
																 & + C (\varepsilon^{(l+1)}_1+ \varepsilon^{(l+1)}_2) (\lambda^{(l+1)}_k)^2 \abs{x}^{-2}.
	\end{split}
\end{equation*}
Multiplying both sides by $(\mu^{(0)}_k)^{-2}$, we obtain
\begin{eqnarray*}
	&& \abs{\tau^{(l+1)}_k((\mu^{(0)}_k)^{-2}F^+_{\tilde{A}_k}) - \left( \tilde c^{(l+1)}_k + \iota^* \tilde d^{(l+1)}_k \right)} \\
	&\leq&  C \varepsilon^{(l+1)}_2 (\mu^{(0)}_k)^{-2} \abs{x} + C \varepsilon^{(l+1)}_1 (\mu^{(0)}_k)^{-2} (\lambda^{(l+1)}_k)^3 \abs{x}^{-5} \\
	&& + C (\varepsilon^{(l+1)}_1 + \varepsilon^{(l+2)}_2) \left( \frac{\mu^{(l)}_k}{\mu^{(0)}_k} \right)^{2} \abs{x}^{-2}.
\end{eqnarray*}
Similar to the proof in Step 1 of Lemma \ref{lem:djinf}, we find
$$
	\lim_{R\searrow 0}\lim_{k\to \infty} (\mu^{(0)}_k)^{-2} \rho_k(F^+_{A_k}) =  \iota^* (\beta_*{d}^{(l+1)}_\infty).
$$
Then the left-hand side of \eqref{eqn:claim1} is
\begin{eqnarray*}
	&& \lim_{R\searrow 0}\int_{S^3_R(z_1)} {\rm Tr} \iota^*( \beta_*(d^{(l+1)}_\infty))\wedge a \\
	&=& \lim_{R\searrow 0} R^{-4} \int_{S^3_R(z_1) }{\rm Tr} \beta_*(d^{(l+1)}_\infty)\wedge a \\
	&=&\lim_{R\searrow 0} R^{-4} \int_{B_R} {\rm Tr} \beta_*(d^{(l+1)}_\infty)\wedge da \\
	&=& \frac{\pi^2}{2}\langle \beta_*(d^{(l+1)}_\infty), D_{A_\infty}^- a(z_1) \rangle.
\end{eqnarray*}
By Lemma \ref{lem:djinf}, $d^{(l+1)}_\infty$ ia a nonvanishing standard type tensor.  Hence, the contradiction that proves Theorem \ref{thm:next} follows from the following lemma.
\begin{lemma}
	\label{lem:atiyah}
	Let $A$ be any ASD $\SU(2)$-connection on $S^4$ and $z\in S^4$ be a fixed point. For any nonvanishing standard type tensor $\xi$ in $\Lambda^{2,-}T^*_z S^4\otimes {\rm End}(E_z)$, there exists $a\in {\rm Ker}D^+_{A}$ such that
	$$
		\langle \xi, D^-_A a(z) \rangle\ne 0.
	$$
\end{lemma}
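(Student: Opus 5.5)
The plan is to study the linear map
$$\Psi : {\rm Ker} D^+_A \to \Lambda^{2,-}T^*_zS^4 \otimes \End(E_z), \qquad \Psi(a) = D^-_A a(z),$$
and to show that its image $W$ is never contained in $\xi^\perp$. Fix orthonormal bases of $\Lambda^{2,-}T^*_z$ and of $\mathfrak{su}(2)$. Then $\Lambda^{2,-}\otimes\mathfrak{su}(2)$ becomes the space of real $3\times 3$ matrices with the pairing $\LA X, Y\RA = \Tr(X^TY)$. A nonvanishing standard curvature tensor is $\xi = \lambda O$ with $\lambda \neq 0$ and $O \in O(3)$. So it suffices to exhibit enough elements of $W$ that no orthogonal matrix is orthogonal to all of them.

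\textbf{Step 1 (cheap elements of ${\rm Ker} D^+_A$).} I would first use the symmetries of the ASD equation.
\begin{itemize}
\item \emph{Gauge directions} $a = D_A u$ satisfy $D^+_A a = [F^+_A,u] = 0$ and $D^-_A a(z) = [F_A(z), u(z)]$. This realizes the left action $M \mapsto \Omega M$ of $\mathfrak{so}(3) \cong \mathfrak{su}(2)$ on $M := F_A(z)$.
\item \emph{Conformal directions} $a = \iota_V F_A$ for a conformal Killing field $V$. Since conformal maps preserve the ASD condition, $D_A(\iota_V F_A)$ is the covariant Lie derivative of $F_A$, hence ASD, so $a \in {\rm Ker} D^+_A$.
\end{itemize}
Taking $V$ to vanish at $z$ gives two families. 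The dilation centred at $z$ gives $D^-_A a(z) = 2F_A(z)$. Rotations fixing $z$ give the right action $M \mapsto M\Omega'$ of $\mathfrak{so}(3)_- \subset \mathfrak{so}(4)$ on the $\Lambda^{2,-}$ factor.

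Hence $W \ni M,\ \Omega M,\ M\Omega'$ for all skew $\Omega, \Omega'$. Orthogonality of all of these to $O$ forces $MO^T$ to be symmetric and traceless. So Step 1 alone settles the lemma unless $F_A(z) = N O$ with $N$ symmetric and traceless; this includes the case $F_A(z) = 0$. For a $1$-instanton, the explicit formula shows $F_A(z)$ is itself a nonzero standard tensor, so $MO^T$ is a multiple of an orthogonal matrix. Such a matrix cannot be symmetric traceless unless it is $O'O^T$ with $O'$ symmetric orthogonal of trace $0$, which is impossible in dimension $3$ since the trace is $\pm 1$ or $\pm 3$. Thus charge one is done by Step 1.

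\textbf{Step 2 (general charge via ADHM).} For general charge I would move $z$ to $\infty$ by a conformal transformation, which commutes with everything above, and use the ADHM description in Atiyah's book \cite{atiyahbook}. With data $(\Lambda, B_1, \dots)$ and the framing at $\infty$, the curvature at $\infty$ in the inverted chart is the leading $|x|^{-4}$ coefficient. I expect this coefficient to be an explicit quadratic expression of the form $\sum_i \bar\lambda_i\, e_a\, \lambda_i$, where $\lambda_i \in \mathbb{H}$ are the entries of the row $\Lambda$ and $e_a$ are the imaginary quaternion units. The tangent vectors to the moduli space, in Coulomb gauge, lie in ${\rm Ker} D^+_A$, and their $\Psi$-values are the first variations of this coefficient.

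Varying one entry $\lambda_i \mapsto \lambda_i + \lambda_i h$ with $h \in \mathbb{H}$ gives $\bar\lambda_i (\bar h e_a + e_a h)\lambda_i$. Real $h$ reproduces scaling, and imaginary $h$ produces conjugation-type terms. Each nonzero $\lambda_i$ contributes a standard tensor $|\lambda_i|^2 O_i$ together with its infinitesimal rotations. So $W$ contains each $O_i$ and its tangent space to $SO(3)$ (up to the fixed sign of $O(3)$).

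I would then argue that if $\Tr(O^T O_i) = 0$ and $\Tr(O^T O_i\Omega) = 0$ for all skew $\Omega$, then $O^TO_i$ is symmetric orthogonal of trace $0$. As in Step 1 this is impossible, and it would give the contradiction.

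\textbf{Main obstacle.} The hardest step is Step 2. I must verify that the variations of the $\lambda_i$ can be performed while preserving the ADHM constraint; this may require compensating variations of the $B$-matrices, which have to be checked not to affect the leading coefficient at $\infty$. I also must confirm that the leading coefficient has exactly the form $\sum_i \bar\lambda_i e_a \lambda_i$. I would try first to take $h$ imaginary and to commute it through the constraint, using the $U(k)$ symmetry to put the data in a normal form. If this fails, the fallback is to use all of $\mathfrak{so}(4)\oplus\mathfrak{su}(2)$ acting on the jet of $F_A$ at $z$ together with the translations $V$. Translations give $\nabla_V F_A(z)$, and at a point where $F_A$ has the degenerate form I would aim to show that $\nabla F_A(z)$ supplies the missing directions.
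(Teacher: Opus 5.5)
Your Step~1 is correct: gauge and conformal directions give $\R M+\mathfrak{so}(3)M+M\,\mathfrak{so}(3)$ with $M=F_A(z)$, and this settles charge one. Your Step~2 is exactly the paper's strategy. Put $z$ at the origin of the inverted ADHM chart, where the curvature is $\lambda\,d\bar x\,dx\,\lambda^*$ (your guessed form). Perturb one nonzero entry of $\lambda$ by an arbitrary quaternion, which sweeps out $V_{\xi'}$ for the standard tensor $\xi'$ built from that entry. Then conclude as in Lemma~\ref{lem:linear}, since an orthogonal $3\times 3$ matrix is never symmetric and traceless. \textbf{However, there is a genuine gap:} the step you call the main obstacle is the only non-formal input, and you leave it open. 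You need an actual curve of ADHM data $(B_t,\lambda_t)$ in which a single entry $\lambda_i$ moves with arbitrary velocity in $\mathbb H$ and the other entries stay fixed. Without it, Step~2 produces no element of ${\rm Ker}\,D_A^+$. Step~1 cannot finish the job for higher charge, because you have no way to exclude the degenerate case $F_A(z)=NO$ with $N$ symmetric traceless (e.g.\ $F_A(z)=0$). Your suggested remedies do not fill this. Every symmetry of the ADHM equations changes $F_A(z)$ only within your Step~1 directions. This includes the framing change $\lambda\mapsto p\lambda$, the $O(\kappa)$-action (the relevant group for $\SU(2)$, not $U(\kappa)$, and it leaves the connection unchanged), and the conformal maps fixing $z$. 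Moving one entry alone in general breaks (A1) in the off-diagonal entries $\bar\lambda_i\lambda_j$ of $\lambda^*\lambda$. That must be repaired by a genuine, non-symmetry variation of $B$, so ``commuting $h$ through the constraint'' has nothing to work with. The jet fallback via $\nabla_V F_A(z)$ is not an argument.

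The paper supplies the missing input in two pieces. First, $\hat u(x)=x\lambda^*+O(|x|^2)$ and $\hat A(0)=0$ give the curvature at $z$ as exactly $\lambda\,d\bar x\,dx\,\lambda^*$. It is independent of $B$, so any compensating change of $B$ is invisible at $z$. Second, Lemma~\ref{lem:ai} solves (A1) for $B_t$, given any smooth $\lambda_t$, by the implicit function theorem. The linearization in $B$ alone, $X\mapsto{\rm Im}(X^*B+B^*X)$, is onto the pure-imaginary Hermitian matrices. An annihilating $Y$ satisfies $BY=0$, and a nonzero pure-imaginary Hermitian quaternionic matrix has zero diagonal and hence rank $\ge 2$. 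But (A2) at $x=0$ forces $\dim_{\mathbb H}{\rm Ker}\,B\le 1$, and (A2) is an open condition. With these two facts your Step~2 covers every charge and Step~1 becomes optional. Step~1 is still a nice ADHM-free proof in charge one, matching the deformations discussed in Section~\ref{sec:discussion}. One caution if you carry this out. For $\Delta(x)=\binom{\lambda}{B-xI}$, reality of $\Delta(x)^*\Delta(x)$ for all $x$ forces $B$ to be symmetric, so the compensating $X$ must be symmetric too. In that class the annihilator condition is only that $BY$ be entrywise antisymmetric, not $BY=0$, so the rank argument has to be rechecked there.
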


For the proof of Lemma \ref{lem:atiyah}, we introduce the next lemma which is a basic fact in linear algebra. For a stantdard curvature tensor $\xi$, set
$$
V_{\xi}:= \set{ \lambda \xi + ad_{\sigma} \xi|\, \lambda \in \mathbb R, \sigma\in \mathfrak{su}(2)},
$$
where $ad_{\sigma}$ is the adjoint action of $\sigma$ on the Lie algebra $\mathfrak{su}(2)$. 
\begin{lemma}
	\label{lem:linear}
 For two nonvanishing standard curvature tensors $\xi$ and $\xi'$, there exists $\sigma\in \mathfrak{su}(2)$ such that
\begin{equation*}
	\langle \xi, ad_{\sigma} \xi' \rangle\ne 0 \quad \text{or} \quad \langle \xi,\xi' \rangle\ne 0.
\end{equation*}
In particular, $\xi$ and $V_{\xi'}$ are not perpendicular.
\end{lemma}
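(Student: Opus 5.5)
The plan is to reduce the lemma to a statement about $3\times 3$ orthogonal matrices.

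\textbf{Step 1: matrix form.} Fix an orthonormal basis $(f_1,f_2,f_3)$ of $\Lambda^{2,-}$. Take the basis $(\i,\j,\k)$ of $\mathfrak{su}(2)$ to be a standard quaternionic basis. With respect to the Killing inner product used in the pairing, this basis is orthogonal with equal norms, so after a harmless rescaling it is orthonormal.

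In these bases, a tensor $\xi\in\Lambda^{2,-}\otimes\mathfrak{su}(2)$ is a $3\times 3$ matrix, and the pairing is $\langle \xi,\xi'\rangle = \operatorname{tr}(\xi^T\xi')$ up to a fixed positive constant. Suppose $\xi$ is a nonvanishing standard curvature tensor, $\xi=\lambda(e_1\otimes\i+e_2\otimes\j+e_3\otimes\k)$ with $(e_a)$ orthonormal and $\lambda\neq0$. Then its matrix is $\lambda O$ with $O\in O(3)$. There is one point to check here: the definition allows a different basis of $\mathfrak{su}(2)$ for each tensor. Any two standard quaternionic bases differ by an orthogonal change (an element of $SO(3)$, or possibly $O(3)$), so this only changes $O$ by an orthogonal factor. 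Hence $\xi=\lambda O$ and $\xi'=\lambda' O'$ with $O,O'\in O(3)$ and $\lambda\lambda'\neq 0$.

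\textbf{Step 2: the adjoint action.} For $\sigma\in\mathfrak{su}(2)$, $ad_\sigma$ acts on the $\mathfrak{su}(2)$ factor only. In an orthonormal basis it is a skew-symmetric $3\times3$ matrix $S_\sigma$, and since $ad:\mathfrak{su}(2)\to\mathfrak{so}(3)$ is an isomorphism, every skew matrix arises this way. Depending on the convention, $ad_\sigma\xi'$ has matrix $\xi' S_\sigma^T$ or $\xi' S_\sigma$; either way it is $\pm\lambda' O' S$ for an arbitrary skew matrix $S$.

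Set $P=O^TO'\in O(3)$. Then
\[
\langle\xi,\xi'\rangle=c\,\lambda\lambda'\operatorname{tr}P, \qquad \langle \xi, ad_\sigma\xi'\rangle = \pm c\,\lambda\lambda'\operatorname{tr}(PS)
\]
for some constant $c>0$.

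\textbf{Step 3: contradiction.} Suppose both quantities vanish for every $\sigma$. Then $\operatorname{tr}(PS)=0$ for all skew $S$. Since $\operatorname{tr}(PS)$ pairs $S$ with the skew part of $P$ (up to sign and transpose), this forces $P$ to be symmetric.

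A symmetric orthogonal matrix has eigenvalues $\pm1$. In dimension $3$ its trace is therefore odd, lying in $\{\pm1,\pm3\}$, and in particular $\operatorname{tr}P\neq0$. This contradicts $\langle\xi,\xi'\rangle=0$.

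\textbf{Last assertion.} $V_{\xi'}$ contains $\xi'$ and all $ad_\sigma\xi'$, and we have just shown that $\xi$ pairs nontrivially with one of these. So $\xi$ is not perpendicular to $V_{\xi'}$.

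\textbf{Main obstacle.} The only delicate point is the normalization in Step 1. We need the inner product on $\mathfrak{su}(2)$ to make the chosen $(\i,\j,\k)$ orthonormal up to a common scale, so that standard tensors really correspond to scalar multiples of orthogonal matrices. We also need $ad(\mathfrak{su}(2))$ to be the full $\mathfrak{so}(3)$ in that basis. Both follow from $Ad$-invariance of the trace form together with the quaternion relations.
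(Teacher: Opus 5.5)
Your proof is correct and follows essentially the same route as the paper's. Your $P=O^TO'$ is the paper's orthogonal matrix $\Omega$ taking $(e_a)$ to $(e'_a)$, and vanishing of the pairings with $\xi'$ and with every $ad_\sigma\xi'$ forces it to be symmetric and traceless. You also make explicit two points the paper leaves implicit: why a symmetric orthogonal $3\times 3$ matrix cannot be traceless (its eigenvalues are $\pm1$, so its trace is odd), and why the two tensors may be written in a common basis of $\mathfrak{su}(2)$.
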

\begin{proof}
By taking appropriate basis and scaling , we may assume $\xi= e_1\otimes \i + e_2 \otimes \j + e_3\otimes \k$ and $\xi'= e'_1\otimes \i + e'_2 \otimes \j + e'_3\otimes \k$. There is a unique $3 \times  3$ orthogonal matrix $(\Omega_{ij})$ that maps $e_1,e_2,e_3$ to $e_1',e_2',e_3'$. If the claim were not true, then for any skew-symmetric $3 \times 3$ matrix $(S_{ij})$, 
$$
\sum_{i,j=1}^3 \Omega_{ij} S_{ij} =0 \quad \text{and} \quad \sum_{i=1}^3 \Omega_{ii}=0.
$$
Hence the orthogonal matrix $\Omega$ is both symmetric and traceless, which is impossible.
\end{proof}

\begin{proof}[Proof of Lemma \ref{lem:atiyah}]
	Let $A(t)$ ($t\in (-\varepsilon,\varepsilon)$) be a smooth deformation of $A$ as ASD connections and set $a= \frac{d}{dt}|_{t=0} A(t)$, then 
	$$
		a\in {\rm Ker}D_A^+ \quad \text{and} \quad D^-_Aa= \frac{d}{dt}|_{t=0} F_{A(t)}.
	$$
	By Lemma \ref{lem:linear}, the proof of Lemma \ref{lem:atiyah} is reduced to the claim that 
	\begin{equation}
		\label{eqn:keyclaim}
	\set{\frac{d}{dt}|_{t=0} F_{A(t)}(z)|\, A(t) \text{ is any deformation of $A$}}\supset V_{\xi}
	\end{equation}
	for \textit{some} nonvanishing standard curvature tensor $\xi$.

	For that pupose, we recall ADHM construction and the explicit formula for the curvature tensor. We follow the presentation in the book \cite{atiyahbook}. Assume the instanton number of $A$ is $\kappa$. Let $\lambda$ be a row of $\kappa$ quaternions, i.e. $\lambda=(\lambda_1,\dots ,\lambda_\kappa)$ and let $B$ be a $\kappa \times  \kappa$ matrix of quaternions. Assume that the pair $(B,\lambda)$ satisfies the following ADHM conditions:
	\begin{enumerate}[label=(A\arabic*)]
		\item ${\rm Im}(B^*B + \lambda^* \lambda)=0$;
		\item the matrix $\binom{\lambda}{B-x I}$ is of full rank for all $x\in \mathbb H(=\mathbb R^4)$.
	\end{enumerate}
Define
$$
u=[\lambda(B- x I)^{-1}]^*.
$$
The connection on $\Real^4$ given by
$$
	A_{(B,\lambda)} = \frac{1}{2}\frac{u^* du - du^* u}{1+ \abs{u}^2}
$$
is an SD connection. Recall that the imaginary part of $\mathbb H$ is identified with $\mathfrak{su}(2)$ and $A_{(B,\lambda)}$ is a connection with a global trivialization understood. Note that there are some removable singularities at $x$ where $B- x I$ fails to be invertible. 

Pulling back by $\iota$, we obtain ASD connections
$$
	\hat A_{(B,\lambda)} = \iota^* A_{(B,\lambda)} = \frac{1}{2} \frac{\hat u^* d\hat u - d\hat u^* \hat u}{ 1+ \abs{\hat u}^2}.
$$
Setting $\hat{u}= u\circ \iota$, we compute
\begin{eqnarray*}
\hat{u}(x)&=& u(\frac{x}{\abs{x}^2})  =\left[ \lambda \left( \frac{x}{\abs{x}^2} (\bar x B -I) \right)^{-1} \right]^* \\
&=& \left[ \lambda \left( \bar x B -I \right)^{-1} \bar x \right]^* = \left[ \lambda \bar x + O(\abs{x}^2) \right]^* \\
&=& x \lambda^* +O(\abs{x}^2).
\end{eqnarray*}
Noticing that $\hat A_{(B,\lambda)}(0)=0$, we compute the curvature at the origin 
\begin{equation*}
	F_{\hat A_{(B,\lambda)}}(0) =  d \hat A_{(B,\lambda)}(0) = d\hat u^* d\hat u (0) = \lambda d\bar x dx \lambda^*.
\end{equation*}
A key feature of the above formula is that the right-hand side above is independent of $B$. 

If $(\i,\j,\k)$ is a standard basis of $\mathfrak{su}(2)$ (or equivalently imaginary units for $\mathbb H$), then $dx= dx_1 + \i dx_2 + \j dx_3 + \k dx_4$, from which we obtain
\begin{equation}
	\label{eqn:fatzero}
	F_{\hat A_{(B,\lambda)}}(0)= 2\sum_{j=1}^\kappa \lambda_j (e_1 \otimes \i + e_2\otimes \j + e_3\otimes \k) \lambda_j^*
\end{equation}
where $e_1= dx_1\wedge dx_2 - dx_3\wedge dx_4$, and similarly for $e_2$ and $e_3$.

Let $A$ be the ASD connection given in Lemma \ref{lem:atiyah}. By some abuse of notation, we also write $A$ for its pullback by the stereographic projection and assume that $z$ is the origin. Suppose that $A=\hat A_{B,\lambda}$ for some $(B,\lambda)$ satisfying (A1) and (A2). Due to (A2), $\lambda$ is not zero vector and we assume that $\lambda_\kappa\ne 0$.

Let $\sigma$ be any fixed quaternion number and set
$$
	\lambda_t= (\lambda_1,\dots, \lambda_\kappa + t\sigma).
$$
{\bf Claim:} there is a smooth family of $\kappa \times \kappa$ quaternionic matrices, denoted by $B_t$ such that $(B_t,\lambda_t)$ satisfies (A1) and (A2).
This claim may be well known but for completeness, we give a proof as a separate lemma (Lemma \ref{lem:ai}) below. Assuming the claim, $\hat A_{(B_t,\lambda_t)}$ is a deformation of $A$ and \eqref{eqn:fatzero} implies that
$$
\set{\frac{d}{dt}|_{t=0} F_{\hat A_{(B_t,\lambda_t)}}(0) |\, \sigma\in \mathbb H} \supset V_\xi
$$
where $\xi= e_1 \otimes (\lambda_\kappa \i \lambda_\kappa^*) + e_2\otimes (\lambda_\kappa \j \lambda_\kappa^*) + e_3\otimes (\lambda_\kappa \k \lambda_\kappa^*)$ is a standard curvature tensor. This concludes the proof of \eqref{eqn:keyclaim} and hence the proof of Lemma \ref{lem:atiyah}.
\end{proof}

\begin{lemma}
	\label{lem:ai} 
	Let $(B,\lambda)$ be a pair (as above) satisfying the ADHM condition (A1) and (A2). If $(\lambda_t)$ (for $t\in (-\varepsilon,\varepsilon)$) is a smooth deformation of $\lambda$, then there exists $\delta>0$ and a smooth family $(B_t)$ (for small $t\in (-\delta,\delta)$) such that the pair $(B_t,\lambda_t)$ with $t\in (-\delta,\delta)$ satisfies (A1) and (A2).
\end{lemma}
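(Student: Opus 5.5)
The plan is to produce $B_t$ by an explicit algebraic formula rather than by an implicit function theorem, and then check (A2) by an openness argument. The starting observation is that $H := B^*B + \lambda^*\lambda$ is a quaternionic Hermitian $\kappa\times\kappa$ matrix. By (A1) it is real, hence a real symmetric matrix, and by (A2) at $x=0$ it is positive definite. Set
\[
K_t := B^*B + \lambda^*\lambda - \lambda_t^*\lambda_t = B^*B + (\lambda^*\lambda - \lambda_t^*\lambda_t).
\]
This is Hermitian and depends smoothly on $t$, with $K_0 = B^*B$. Any $B_t$ with $B_t^*B_t = K_t$ satisfies $B_t^*B_t + \lambda_t^*\lambda_t = H$, which is real, so (A1) holds identically in $t$.

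To solve $B_t^*B_t = K_t$ smoothly, I would first reduce to the case that $B$ is invertible. By (A2) the set of $x\in\mathbb H$ where $B - xI$ fails to be invertible is finite (these are the removable singular points), so one can choose $x_0$ with $B - x_0 I$ invertible. Replacing $B$ by $B - x_0 I$ corresponds to pulling the instanton back by a translation, and I would check that (A1) and (A2) are invariant under $B \mapsto B - x_0 I$; this is standard for the ADHM normalization of \cite{atiyahbook}, where $B$ is symmetric. After solving, I undo the translation. So assume $B$ is invertible. Then $K_0 = B^*B$ is positive definite, and so is $K_t$ for $|t|$ small. I define
\[
B_t := B\,(B^*B)^{-1/2}\,K_t^{1/2},
\]
where the square root is the positive square root of a positive quaternionic Hermitian matrix. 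It is smooth in $t$ (via, e.g., the holomorphic functional calculus or the real $4\kappa\times 4\kappa$ representation). Since $U := B(B^*B)^{-1/2}$ is unitary, $B_t^*B_t = K_t$ and $B_0 = B$, as required.

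Next I would verify (A2) for small $t$. This is an openness statement over the noncompact parameter space $\mathbb H$, so I would split it into two regimes. For $|x| \ge R$ with $R$ large (uniformly in small $t$), the block $B_t - xI$ is invertible because $\|B_t\| \le 2\|B\| < |x|$. On the compact set $|x|\le R$, the smallest singular value of $\binom{\lambda}{B - xI}$ is continuous in $x$ and positive by (A2), hence bounded below by some $c>0$. Since $(\lambda_t, B_t)\to(\lambda, B)$ uniformly, the perturbed matrix still has smallest singular value at least $c/2$ for $|t|<\delta$. This gives (A2) on $(-\delta,\delta)$.

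The step I expect to be the main obstacle is compatibility with any extra normalization of the ADHM data, namely the symmetry $B^T = B$ in Atiyah's convention, which the polar formula need not preserve. If symmetry is required, I would fall back on the implicit function theorem. Consider $\Phi(B,\lambda) = \operatorname{Im}(B^*B + \lambda^*\lambda)$ on symmetric $B$, viewed as the hyperk\"ahler moment map for the $O(\kappa)$-action. Condition (A2) implies that this action has trivial infinitesimal stabilizer, which makes $d\Phi$ surjective. I would then take a right inverse of $d\Phi$ and absorb the components in the $\lambda$-direction using the $O(\kappa)\times\mathrm{Sp}(1)$ symmetry. The delicate point is proving surjectivity of $d\Phi$ in the $B$-directions alone; if that fails, I would weaken the lemma to what the proof of Lemma \ref{lem:atiyah} actually uses, namely allowing $\lambda_t$ to be modified at order $t^2$ or by a gauge transformation, which does not affect $\frac{d}{dt}\big|_{t=0}F(0)$ in \eqref{eqn:fatzero}.
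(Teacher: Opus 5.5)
Your polar-decomposition construction is correct when $B$ is invertible. In that case $K_t$ stays positive definite, $B_t=B(B^*B)^{-1/2}K_t^{1/2}$ is smooth with $B_0=B$, and $B_t^*B_t+\lambda_t^*\lambda_t=H$ is real. Your two-regime openness argument for (A2) is also fine; it is a more careful version of the paper's closing remark.

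The gap is singular $B$, which (A2) allows ($\dim_{\mathbb H}\ker B\le 1$). There the ansatz $B_t^*B_t=K_t$ is not merely hard to solve but impossible in general. If $0\neq w\in\ker B$, then $\lambda w\neq 0$ by (A2) at $x=0$. For $\lambda_t=(1+t)\lambda$ you get $w^*K_tw=-(2t+t^2)|\lambda w|^2<0$ for small $t>0$, whereas $w^*B_t^*B_tw=|B_tw|^2\ge 0$.

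Your translation reduction does not repair this:
\begin{itemize}
\item With (A1) as stated, with no symmetry assumption, the $(i,j)$ entry of $\operatorname{Im}\bigl((B-x_0I)^*(B-x_0I)+\lambda^*\lambda\bigr)$ is $-\operatorname{Im}\bigl(\bar x_0(B_{ij}-B_{ji})\bigr)$. So the translated data need not satisfy (A1).
\item If you impose $B^T=B$, the translated data do satisfy (A1). But the polar formula returns a non-symmetric $B'_t$, and translating back gives $\operatorname{Im}(B_t^*B_t+\lambda_t^*\lambda_t)=\operatorname{Im}\bigl(\bar x_0B'_t+(B'_t)^*x_0\bigr)$. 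Its entries are $\operatorname{Im}\bigl(\bar x_0((B'_t)_{ij}-(B'_t)_{ji})\bigr)$, which do not vanish in general.
\end{itemize}
Either way, translation invariance needs symmetry and the polar formula destroys it, so the two halves of the reduction are incompatible.

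Your fallback points at the paper's route, but it omits exactly the step that carries the proof. The paper applies the implicit function theorem to $\Psi(B,\lambda)=\operatorname{Im}(B^*B+\lambda^*\lambda)$ in the $B$-variable alone, over all of $M_\kappa(\mathbb H)$. It shows that $L(X)=\operatorname{Im}(X^*B+B^*X)$ maps onto the space $V$ of pure-imaginary Hermitian matrices, as follows:
\begin{itemize}
\item If $Y\in V$ is orthogonal to the image, then $\operatorname{Re}\operatorname{Tr}(X^*BY)=0$ for all $X$, so $BY=0$.
\item A nonzero $Y\in V$ cannot have rank one, because a rank-one Hermitian matrix $c\,uu^*$ has a nonzero diagonal entry while $Y$ has zero diagonal.
\item So $\operatorname{rank}Y\ge 2$, which forces $\dim_{\mathbb H}\ker B\ge 2$ and contradicts (A2) at $x=0$.
\end{itemize}
This is precisely where singular $B$ is handled. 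For invertible $B$, $BY=0$ gives $Y=0$ at once, which is the case your polar formula already covers.

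A trivial infinitesimal stabilizer only gives surjectivity of $d\Psi$ in the joint $(B,\lambda)$-directions. That is of no use when $\lambda_t$ is prescribed. Absorbing $\lambda$-components by symmetries, or weakening the lemma, are not substitutes for this argument.

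Your concern about $B^T=B$ is a fair one for the ADHM construction itself. However, neither the lemma as stated nor the paper's proof imposes symmetry, so a symmetric implicit function theorem is not needed for this statement.
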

\begin{proof}
Define the map
$$
\Psi(B,\lambda) = {\rm Im} (B^*B + \lambda^* \lambda),
$$
where ${\rm Im}$ extracts the imaginary part of each entry. The condition (A1) is equivalent to  $\Psi(B,\lambda)=0$.

The condition (A2) implies (in particular, when $x=0$) that $\binom{\lambda}{B}$ has full column rank. Hence, for any column vector $w$, if $Bw=0$ and $\lambda w=0$, then $w=0$. This implies that the restriction of $\lambda$ to ${\rm Ker}(B)$ is injective, and hence 
\begin{equation}
	\label{eqn:kerB}
	\dim_{\mathbb H}({\rm Ker}(B))\leq 1.
\end{equation}

The proof of the lemma uses the implicit function theorem. First, we observe that the image of $\Psi$ is the space of $\kappa \times \kappa$ quaternion matrices that are Hermitian and have pure-imaginary entries, denoted by $V$. Our goal is then to show the (partial) linearization $L$ of $\Psi$ at $(B,\lambda)$ defined by
$$
L(X) = {\rm Im} (X^*B + B^*X)
$$
is a surjective map from $M_\kappa(\mathbb H)$ (the set of $\kappa \times \kappa$ quaternion matrices) to $V$.

To see this, we consider the real inner product
$$
\langle A,B \rangle= {\rm Re} {\rm Tr}(A^*B).
$$
It suffices to show that if for some $Y\in V$ and all $X\in M_\kappa(\mathbb H)$ satisfying $\langle L(X),Y \rangle=0$, then $Y=0$. By the definition of $L$ and the cyclic invariance of the real part of the trace, we have
$$
{\rm Re} {\rm Tr}(X^* BY)=0, \quad \forall X,
$$
which implies that $BY=0$.

We then discuss the rank of $Y$. If it is $1$, then the quaternion spectral theorem (see Theorem 3.3 in \cite{quaternion}) shows that $Y= c u u^*$ for some $c\in \Real^*$ and some non-zero column vector $u$. Then some diagonal entries (of $Y$) should be $c \abs{u_i}^2 \ne 0$. This is a contradiction to the fact that $Y$ is pure-imaginary and Hermitian (which implies that $Y$ must have zero diagonal entries). Hence, $\dim_{\mathbb H} ({\rm Im} Y)\geq 2$.

However, $BY=0$, implies that $\dim_{\mathbb H} {\rm Ker}(B)\geq 2$. This is a contradiction to \eqref{eqn:kerB}. The proof is done.
\end{proof}
\begin{rmk}
	It might look obvious that (A2) is an open condition. Indeed, for sufficiently large $x$ (depending on $B$), the condition (A2) holds trivially. For $x$ in a fixed compact set, the condition (A2) is obviously open condition.
\end{rmk}

\vspace{10mm}

\section{Proof of Corollaries \ref{cor:epskappa}-\ref{cor:discreteness}}

\begin{proof}[Proof of Corollary \ref{cor:epskappa}] Let $E \to S^4$ be the $\SU(2)$-bundle on the 4-sphere of charge $\kappa = \LA c_2(E), \LB S^4\RB \RA.$ We assume without loss of generality that $\kappa \geq 0.$

Suppose for the sake of contradiction that no such $\eps_\kappa$ exists, and let $A_k$ be a sequence of \emph{non-ASD} Yang-Mills connections on $E$ with
$$\YM(A_k) < 4\pi^2\left( \kappa + 2 \right) + \frac{1}{k}.$$
By the GKS gap theorem \cite{gursky2018}, we must have
$$\YM(A_k) = \frac12 \|F_{A_k}\|^2 \geq 4\pi^2(\kappa + 2),$$
so that
$$\lim_{k \to \infty} \YM(A_k) = 4\pi^2(\kappa + 2).$$
The Chern-Weil formula \cite[(2.1.31-33)]{donaldson1990} reads
$$\YM(A_k) = 4 \pi^2 \kappa + \|F^+_{A_k}\|^2.$$
So we must have
$$\lim_{k \to \infty} \|F^+_{A_k}\|^2 = 8 \pi^2.$$

We now pass to a subsequence such that $A_k$ converges in the Uhlenbeck sense, and let $\mathcal{B} = \{A_\infty, \{ B_{i,j} \} \}.$
By applying M\"obius transformations, we can assume that $F^+_{A_\infty} \not \equiv 0$.
Again by the GKS gap theorem \cite{gursky2018}, we must have
$$\|F^+_{A_\infty} \|^2 \geq 8\pi^2.$$
But we also have
$$\|F^+_{A_\infty} \|^2 \leq \lim_{k \to \infty} \|F^+_{A_k} \|^2 = 8 \pi^2.$$
We therefore have
\begin{equation}\label{equalityinGKS}
\|F^+_{A_\infty} \|^2 = 8 \pi^2 = \lim_{k \to \infty} \|F^+_{A_k} \|^2.
\end{equation}
Hence, all of the self-dual energy is accounted for by $F^+_{A_\infty}.$ Therefore $B_{i,j}$ must be ASD for all $i,j.$

\begin{claim}
    $A_\infty$ is a unit self-dual instanton. 
\end{claim}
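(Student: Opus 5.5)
The plan is to combine the Chern--Weil identity for $A_\infty$ with the GKS gap theorem \cite{gursky2018}, applied in both orientations. Let $\kappa_\infty$ be the charge of $E_\infty$, with the same sign convention as above, so that ASD connections have $\kappa_\infty\ge 0$. Chern--Weil applied to $A_\infty$ gives
$$\|F^-_{A_\infty}\|^2-\|F^+_{A_\infty}\|^2 = 8\pi^2\kappa_\infty .$$
By \eqref{equalityinGKS} this becomes $\|F^-_{A_\infty}\|^2 = 8\pi^2(\kappa_\infty+1)$, and hence
$$\YM(A_\infty) = 4\pi^2(\kappa_\infty+2).$$
Since $F^+_{A_\infty}\not\equiv 0$, the connection $A_\infty$ is not ASD. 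I would then split into two cases.

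\emph{Case 1: $F^-_{A_\infty}\equiv 0$.} Then $A_\infty$ is a self-dual instanton, and the displayed identity forces $\kappa_\infty=-1$. So $A_\infty$ is a unit SD instanton, which is exactly the claim.

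\emph{Case 2: $F^-_{A_\infty}\not\equiv 0$.} Then $A_\infty$ is a genuine non-instanton Yang--Mills connection. First apply GKS to the orientation-reversed sphere; this gives $\|F^-_{A_\infty}\|^2\ge 8\pi^2$, hence $\kappa_\infty\ge 0$. In this case we get $\YM(A_\infty)=4\pi^2(|\kappa_\infty|+2)$, which is exactly the GKS threshold. So Case 2 amounts to the equality case of the gap theorem, and the claim reduces to showing that equality is never attained by a non-instanton.

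This equality case is the main obstacle, because the gap theorem as quoted above only excludes energies strictly below the threshold. My first attempt would be to go back into the GKS argument itself. That argument is a Weitzenböck/Bochner inequality for $F^+$, in the spirit of \eqref{splitepsreg:Bochner}, combined with a sharp Sobolev/Kato inequality on the round $S^4$. I would trace the equality conditions through it: equality should force the Kato inequality for $|F^+_{A_\infty}|$ to be sharp and $|F^+_{A_\infty}|$ to be an extremal of the conformally invariant Sobolev inequality. I expect this to force $F^+_{A_\infty}$ to be, up to a Möbius transformation, the curvature of a unit SD instanton. In that situation the quadratic term $F^+\# F^+$ in the Bochner formula would be incompatible with $F^-\neq 0$ through the Yang--Mills coupling $D_{A}^*F^-=0$, giving a contradiction. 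This would rule out Case 2 and leave only Case 1.

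As a fallback, if the rigidity analysis is not available, I would use the freedom in the Möbius normalization. Among all conformal rescalings of the sequence, choose the one whose Uhlenbeck limit carries a nonzero SD part with the \emph{smallest} possible $\|F^-\|^2$. By the bubble-tree energy identity, the ASD energy of a Case-2 limit could then be redistributed into ASD bubbles, which would contradict minimality. This would again force us into Case 1.
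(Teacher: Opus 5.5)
Your Chern--Weil bookkeeping and Case~1 are correct. You also correctly see that everything hinges on the equality case $\|F^+_{A_\infty}\|^2=8\pi^2$ of the GKS theorem. But Case~2 is never actually ruled out, so there is a genuine gap.

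Your first attempt ends in an expectation, namely that equality forces $F^+_{A_\infty}$ to look like the curvature of a unit SD instanton. It is followed by an unexplained incompatibility between $F^+\# F^+$ and $D_A^*F^-=0$, which is not an argument.

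The fallback does not work either. Any Möbius normalization whose limit still has $F^+\not\equiv 0$ differs from the present one by a bounded sequence of conformal maps; otherwise the new limit is one of the ASD bubbles or flat. So it yields the same $\|F^-\|^2$, and minimizing over normalizations gives nothing. The energy identity also offers no mechanism for moving ASD energy out of a fixed limit into bubbles.

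Two ingredients are missing.
\begin{itemize}
\item Theorem~1.1 of \cite{gursky2018} already contains the rigidity you were trying to reconstruct. In the equality case, after a conformal change, $F^+_{A_\infty}$ is covariantly constant.
\item A topological step turns this parallelism into a charge computation, and that is what eliminates $F^-$.
\end{itemize}

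The topological step runs as follows. A nonzero parallel section of $\Lambda^{2,+}\otimes\gothg_{E_\infty}$ is a parallel, hence constant-rank, bundle map $\Lambda^{2,+}_{S^4}\to\gothg_{E_\infty}$. It is nonzero since $F^+_{A_\infty}\not\equiv 0$. A nontrivial proper kernel would split $\Lambda^{2,+}_{S^4}$ into a line bundle plus an oriented plane bundle, both trivial over $S^4$. That contradicts $p_1(\Lambda^{2,+}_{S^4})=4$.

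So the map is an isomorphism of rank-$3$ bundles. Hence $-4c_2(E_\infty)=p_1(\gothg_{E_\infty})=4$, i.e.\ $\kappa_\infty=-1$. Your own identity $\|F^-_{A_\infty}\|^2=8\pi^2(\kappa_\infty+1)$ then gives $F^-_{A_\infty}\equiv 0$ directly.

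This is the paper's argument. With it no case split is needed, and the orientation-reversed use of GKS becomes superfluous: Case~2's conclusion $\kappa_\infty\ge 0$ is simply contradicted.
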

\begin{claimproof}
    By (\ref{equalityinGKS}), equality holds in the GKS gap theorem \cite[Theorem 1.1]{gursky2018}; after a conformal change, $F^+_{A_\infty}$ is covariantly constant. In other words, $F^+_{A_\infty}$ induces a nonzero covariantly constant endomorphism between $\Lambda_{S^4}^{2,+}$ and the $\mathfrak{su}(2)$ adjoint bundle of $E_\infty,$ $\gothg_{E_\infty}.$ However, the first Pontrjagin class of $\Lambda^{2,+}_{S^4}$ is 4. 
    In particular, the bundle $\Lambda^{2,+}_{S^4}$ is indecomposable, so the endomorphism $F^+_{A_\infty}$ must induce an isomorphism between $\Lambda_{S^4}^{2,+}$ and $\gothg_{E_\infty}.$ From Donaldson-Kronheimer \cite[2.1.37]{donaldson1990}, we have $p_1(\gothg_{E_\infty}) = -4 c_2(E_\infty).$ We conclude that $c_2(E_\infty) = -1.$
    The Chern-Weil formula then reads
$$\YM(A_\infty) = -4\pi^2 + \|F^+_{A_\infty}\|^2 = -4 \pi^2 + 8 \pi^2 = 4\pi^2.$$
This implies that $\YM(A_\infty) = \frac12 \|F^+_{A_\infty}\|,$ so in particular $A_\infty$ is self-dual. Since it is a self-dual instanton on the bundle of charge $-1,$ it is equivalent to the standard self-dual instanton.
\end{claimproof}

We have shown that the sequence $A_k$ satisfies the hypotheses of Theorem \ref{thm:next}, which is impossible.
\end{proof}

\begin{proof}[Proof of Corollary \ref{cor:discreteness}]
    For any connection on the trivial bundle, we have
    $$\|F^+_A \| = \|F^-_A\|,$$
    so in particular $\YM(A) = \frac12 \left( \|F^+_A \|^2 + \|F^-_A\|^2 \right) < 16 \pi^2$ if and only if both
    $$\|F^+_A\|^2 < 16 \pi^2 \text{ and } \|F^-_A\|^2 < 16 \pi^2.$$

    Now suppose for the sake of contradiction that the energy spectrum below $16\pi^2$ is not discrete. Let $A_k$ be a sequence of Yang-Mills $\SU(2)$-connections on the trivial bundle over $S^4,$ with
    $$\YM(A_k) \to K < 16 \pi^2$$
    but with $\YM(A_k) \neq K$ for all $k.$
    We can pass to a subsequence which converges in the bubble-tree sense, and let $\mathcal{B} = \{A_\infty, \{ B_{i,j} \} \}$ as above. 
    We have both the energy identity
    $$K = \YM(A_\infty) + \sum \YM(B_{i,j})$$
    and the conservation of Chern number
    $$0 = \kappa(E_\infty) + \sum \kappa(E_{i,j}).$$
    We can assume without loss of generality that $A_\infty$ has the largest energy among all connections in $\mathcal{B}.$

    \vspace{2mm}

    \noindent {\bf Case 1.} No bubbling occurs. In this case, we have $A_k \to A_\infty$ smoothly modulo gauge, and $\YM(A_\infty) = K.$ But all connections in a neighborhood of $A_\infty$ obey the standard \L ojasiewicz inequality (see B. Yang \cite[Lemma 12]{byanguniqueness} or \cite[Lemma B.2]{dayapremawaldron} for a recent exposition), which implies that
    $$|\YM(A_k) - K| = 0$$
    for $k$ sufficiently large. This contradicts our assumption.

    \vspace{2mm}

    \noindent {\bf Case 2.} Bubbling occurs with two bubbles $B_1$ and $B_2.$ Since $A_\infty$ has the largest energy, we must have $\YM(B_{i}) < 8\pi^2$ for both bubbles $B_i.$ The argument in the previous proof then implies that $|\kappa(B_{i}) | = 1$ and $B_{i}$ are unit instantons for $i = 1,2.$ But then also $\YM(A_\infty) < 8 \pi^2,$ so $A_\infty$ is also a unit instanton. By conservation of Chern number, we cannot have a bubble tree consisting of an odd number of unit instantons.

\vspace{2mm}

    \noindent {\bf Case 3.} Bubbling occurs with only one bubble, $B_1.$ As in Case 2, $B_1$ must be a unit instanton. But then $A_\infty$ lies on a bundle with $|\kappa(E_\infty)| = 1.$ We also have $$\YM(A_\infty) < 16 \pi^2 - 4 \pi^2 = 12 \pi^2 = 4 \pi^2 \left( |\kappa(E_\infty)| + 2 \right).$$
    The GKS gap theorem then implies that $A_\infty$ is a unit instanton, necessarily of opposite charge from $B_1.$ By Theorem \ref{thm:next} (or the main theorem of \cite{yin2023}), this is impossible.
\end{proof}

\vspace{10mm}

\section{Discussions}
\label{sec:discussion}

There is no proof for further results in this section. Instead, we would like discuss how the results in Theorem \ref{thm:main} are related to previously known obstructions in \cite{yin2023}. We do not claim which one is stronger. Indeed, we ignore the technical assumptions and focus on the ideas.

Theorem \ref{thm:main}  suggests that an obstruction can be obtained for each $a\in {\rm Ker}(D_{A_\infty}^+)$. It is well known that $a$ is related to the infinitesimal deformation of ASD connections. Such a deformation may come from the following three sources, which we discuss seperately: 
\begin{enumerate}
	\item a deformation due to the conformal transformation group;
	\item a deformation in gauge;
	\item a nontrivial deformation in the moduli space of ASD connections.
\end{enumerate}

For (1), assume that $M=S^4$ and let $A_\infty$, $z_1$ and $B_{1,1}$ be as in Theorem \ref{thm:main}. Let $z_1'$ be the antipodal point of $z_1$. Let $X$ be a conformal Killing vector field satisfying $X(z_1)=X(z_1')=0$, i.e., $X$ is either a rotation around $z_1,z_1'$ or a scaling (in the stereographic coordinates) from $z_1$ to $z_1'$. Let $\varphi_t$ be the one-parameter family of diffeormorphisms on $M$ generated by $X$.

Using the parallel transport along the curve $t\mapsto \varphi_t(x)$, we define a family of bundle maps (from $E$ to itself) $\tilde{\varphi}_t$ covering $\varphi_t$. Consequently,
$$
	A_t:= \tilde{\varphi}^*_t (A_\infty)
$$
is a family of ASD connections on $E$. Set
$$
	a= \frac{d}{dt}A_t.
$$
Since $F_{A_t}^+=0$ for all $t$, it follows that
$$
	D_{A_\infty}^+a=0.
$$

{\bf Case 1: $X$ is scaling}. 
\begin{eqnarray*}
	(D^-_{A_\infty} a)(z_1) &=& \frac{d}{dt}|_{t=0} F^-_{A_t}(z_1) \\
	&=& \frac{d}{dt}|_{t=0} \tilde{\varphi}_t^* F_{A_\infty}(z_1).
\end{eqnarray*}
Since $\varphi_t(z_1)\equiv z_1$ for any $t$, the lift $\tilde{\varphi}_t$ is irrelevant for the above compution. Moreover,
$\frac{d}{dt}|_{t=0}(\varphi_t)_{*,z_1}$ is the identity map from  $T_{z_1}S^4$ to itself. Therefore,
$$
(D^-_{A_\infty} a)(z_1) = \frac{d}{dt}|_{t=0} \tilde{\varphi}_t^* F_{A_\infty}(z_1) = 2 F_{A_\infty}(z_1).
$$
Thus, the obstruction given by \eqref{eqn:main} is
$$
\langle \hat{\rho}_1(F^+_{B_{1,1}}(\infty)), F_{A_\infty}(z_1) \rangle=0.
$$
This is equation (1.10) in Theorem 1.4 of \cite{yin2023}.

{\bf Case 2: $X$ is rotation}. Again, $\varphi_t$ leaves $z_1$ invariant and for some $\sigma'\in \mathfrak{so}(4)$:
$$
	\frac{d}{dt}|_{t=0} (\varphi_t)_{*,z_1}= \sigma'
$$
as a map from $T_{z_1}S^4$ to itself. $\sigma'$ induces a $\sigma\in \mathfrak{su}(2)$ such that
$$
(D^-_{A_\infty} a) (z_1) = \frac{d}{dt}|_{t=0} \tilde{\varphi}_t^* F_{A_\infty}(z_1) = ad_\sigma(F_{A_\infty}).
$$
The obstruction in Theorem \ref{thm:main} becomes 
$$
\forall \sigma\in \mathfrak{su}(2) \quad 
\langle \hat{\rho}_1(F^+_{B_{1,1}}(\infty)), ad_\sigma(F_{A_\infty})(z_1) \rangle=0,
$$
which is equation (1.9) in Theorem 1.4 of \cite{yin2023}.

For (2), i.e., deformation caused by gauge group action, let $\xi$ be a section of ${\rm End}(E)$ and take $a=D_{A_\infty} \xi$. Since
$$
D_{A_\infty}^- a(z_1) = [F^-_{A_\infty}(z_1),\xi(z_1)],
$$
the obstruction in Theorem \ref{thm:main} becomes
$$
\langle [F^-_{A_\infty}(z_1),\xi(z_1)], \hat{\rho}_1(F_{B_{1,1}}^+(\infty)) \rangle=0.
$$
If the structure group is $\SU(2)$, then this is the same obstruction that was obtained for the rotation case above. However, for general compact structure group, Theorem \ref{thm:main} does provide new obstructions.

In case (3), the obstruction aligns more closely with that found by Biquard for the degeneration of Einstein four-manifolds \cite{biquard2013} (see also \cite{viaclovsky2020} and \cite{ozuch2024integrability}). By comparison with \cite{ozuch2024integrability}, it is natural to ask whether the assumption $H^{2,+}_{A_\infty}=\{0\}$ can be removed from Theorem \ref{thm:main}.

\bibliographystyle{alpha}
\bibliography{foo}

@misc{yin2023,
      title={On the blow-up of {Y}ang-{M}ills fields in dimension four}, 
      author={Hao Yin},
      year={2023},
      eprint={2303.14015},
      archivePrefix={arXiv},
      primaryClass={math.DG},
      url={https://arxiv.org/abs/2303.14015}, 
}

@article{byanguniqueness,
  title={The uniqueness of tangent cones for {Y}ang--{M}ills connections with isolated singularities},
  author={Yang, Baozhong},
  journal={Adv. Math.},
  volume={180},
  number={2},
  pages={648--691},
  year={2003},
}

@article{parker1996bubble,
  title={Bubble tree convergence for harmonic maps},
  author={Parker, Thomas H},
  journal={Journal of Differential Geometry},
  volume={44},
  number={3},
  pages={595--633},
  year={1996},
  publisher={Lehigh University}
}

@article{taubes1982self,
  title={Self-dual {Yang-Mills} connections on non-self-dual 4-manifolds},
  author={Taubes, Clifford Henry},
  journal={Journal of Differential Geometry},
  volume={17},
  number={1},
  pages={139--170},
  year={1982},
  publisher={Lehigh University}
}

@article {taubes1984self,
    AUTHOR = {Taubes, Clifford Henry},
     TITLE = {Self-dual connections on {$4$}-manifolds with indefinite
              intersection matrix},
   JOURNAL = {J. Differential Geom.},
  FJOURNAL = {Journal of Differential Geometry},
    VOLUME = {19},
      YEAR = {1984},
    NUMBER = {2},
     PAGES = {517--560},
      ISSN = {0022-040X},
   MRCLASS = {53C05 (53C80 57N13 57R99 58E99 81E13)},
  MRNUMBER = {755237},
MRREVIEWER = {N. J. Hitchin},
       URL = {http://projecteuclid.org/euclid.jdg/1214438690},
}

@article{taubes1984path,
  title={Path-connected {Y}ang-{M}ills moduli spaces},
  author={Taubes, Clifford Henry},
  journal={Journal of differential geometry},
  volume={19},
  number={2},
  pages={337--392},
  year={1984},
  publisher={Lehigh University}
}

@article{taubes1989stable,
  title={The stable topology of self-dual moduli spaces},
  author={Taubes, Clifford Henry},
  journal={Journal of differential geometry},
  volume={29},
  number={1},
  pages={163--230},
  year={1989},
  publisher={Lehigh University}
}

@book{freed2012instantons,
  title={Instantons and four-manifolds},
  author={Freed, Daniel S and Uhlenbeck, Karen K},
  volume={1},
  year={2012},
  publisher={Springer Science \& Business Media}
}

@book{donaldsonfloer,
  title={Floer homology groups in {Y}ang-{M}ills theory},
  author={Donaldson, Simon Kirwan},
  volume={147},
  year={2002},
  publisher={Cambridge University Press}
}

@article{ozuch2024integrability,
  title={Integrability of {E}instein deformations and desingularizations},
  author={Ozuch, Tristan},
  journal={Communications on Pure and Applied Mathematics},
  volume={77},
  number={1},
  pages={177--220},
  year={2024},
  publisher={Wiley Online Library}
}

@article{lebrunozuch2026desingularizations,
  title={Desingularizations of Conformally {K}aehler, {E}instein Orbifolds},
  author={LeBrun, Claude and Ozuch, Tristan},
  journal={arXiv preprint arXiv:2601.19215},
  year={2026}
}

@article{dayapremawaldron,
title={Parabolic gap theorems for the {Y}ang-{M}ills energy},
author={Dayaprema, Anuk and Waldron, Alex},
journal={Mathematical {P}hysics, {A}nalysis, and {G}eometry},
volume={29},
number={1},
pages={12},
year={2026}
}

@article{parkerwolfson,
  title={Pseudo-holomorphic maps and bubble trees},
  author={Parker, Thomas H and Wolfson, Jon G},
  journal={The Journal of Geometric Analysis},
  volume={3},
  number={1},
  pages={63--98},
  year={1993},
  publisher={Springer}
}

@article{chenbohui,
  title={Smoothness on bubble tree compactified instanton moduli spaces},
  author={Chen, Bohui},
  journal={Acta Mathematica Sinica, English Series},
  volume={26},
  number={2},
  pages={209--240},
  year={2010},
  publisher={Springer}
}

@article{taubesframework,
  title={A framework for {M}orse theory for the {Y}ang-{M}ills functional},
  author={Taubes, Clifford Henry},
  journal={Inv. Math.},
    volume={94},
  number={2},
  pages={327--402},
  year={1988},
}

@article {sibner1989,
    AUTHOR = {Sibner, L. M. and Sibner, R. J. and Uhlenbeck, K.},
     TITLE = {Solutions to {Y}ang-{M}ills equations that are not self-dual},
   JOURNAL = {Proc. Nat. Acad. Sci. U.S.A.},
  FJOURNAL = {Proceedings of the National Academy of Sciences of the United
              States of America},
    VOLUME = {86},
      YEAR = {1989},
    NUMBER = {22},
     PAGES = {8610--8613},
      ISSN = {0027-8424},
   MRCLASS = {58E15 (53C05 58E05 81E13)},
  MRNUMBER = {1023811},
MRREVIEWER = {Jan Segert},
       DOI = {10.1073/pnas.86.22.8610},
       URL = {https://doi.org/10.1073/pnas.86.22.8610},
}

@article{rade1993decay,
  title={Decay estimates for {Y}ang-{M}ills fields: two new proofs},
  author={Rade, Johan},
  journal={Global analysis in modern mathematics (Orono, 1991, Waltham, 1992), Publish or Perish, Houston},
  pages={91--105},
  year={1993}
}

@article {uhlenbeck1982connections,
    AUTHOR = {Uhlenbeck, Karen K.},
     TITLE = {Connections with {$L^{p}$} bounds on curvature},
   JOURNAL = {Comm. Math. Phys.},
  FJOURNAL = {Communications in Mathematical Physics},
    VOLUME = {83},
      YEAR = {1982},
    NUMBER = {1},
     PAGES = {31--42},
      ISSN = {0010-3616},
   MRCLASS = {53C05 (49F10 58E20 81E10)},
  MRNUMBER = {648356},
MRREVIEWER = {Wolfgang L\"{u}cke},
       URL = {http://projecteuclid.org/euclid.cmp/1103920743},
}

@article {uhlenbeck1982removable,
    AUTHOR = {Uhlenbeck, Karen K.},
    TITLE = {Removable singularities in {Y}ang-{M}ills fields},
    JOURNAL = {Comm. Math. Phys.},
    FJOURNAL = {Communications in Mathematical Physics},
    VOLUME = {83},
    YEAR = {1982},
   NUMBER = {1},
     PAGES = {11--29},
      ISSN = {0010-3616},
      MRCLASS = {53C05 (58E20 81E10)},
    MRNUMBER = {648355},
    MRREVIEWER = {Wolfgang L\"{u}cke},
    URL = {http://projecteuclid.org/euclid.cmp/1103920742},
}

@book {donaldson1990,
    AUTHOR = {Donaldson, S. K. and Kronheimer, P. B.},
     TITLE = {The geometry of four-manifolds},
    SERIES = {Oxford Mathematical Monographs},
      NOTE = {Oxford Science Publications},
 PUBLISHER = {The Clarendon Press, Oxford University Press, New York},
      YEAR = {1990},
     PAGES = {x+440},
      ISBN = {0-19-853553-8},
   MRCLASS = {57R57 (57N13 57R55 58D27 58G05)},
  MRNUMBER = {1079726},
MRREVIEWER = {Ronald J. Stern},
}

@article{donaldson1986connections,
  title={Connections, cohomology and the intersection forms of 4-manifolds},
  author={Donaldson, Simon K},
  journal={Journal of Differential Geometry},
  volume={24},
  number={3},
  pages={275--341},
  year={1986},
}

@article {groisser1997sharp,
    AUTHOR = {Groisser, David and Parker, Thomas H.},
     TITLE = {Sharp decay estimates for {Y}ang-{M}ills fields},
   JOURNAL = {Comm. Anal. Geom.},
  FJOURNAL = {Communications in Analysis and Geometry},
    VOLUME = {5},
      YEAR = {1997},
    NUMBER = {3},
     PAGES = {439--474},
      ISSN = {1019-8385},
   MRCLASS = {53C07 (58D27 58E15)},
  MRNUMBER = {1487724},
MRREVIEWER = {Antony Maciocia},
       DOI = {10.4310/CAG.1997.v5.n3.a3},
       URL = {https://doi.org/10.4310/CAG.1997.v5.n3.a3},
}

@article {folland1989,
    AUTHOR = {Folland, G. B.},
     TITLE = {Harmonic analysis of the de {R}ham complex on the sphere},
   JOURNAL = {J. Reine Angew. Math.},
  FJOURNAL = {Journal f\"{u}r die Reine und Angewandte Mathematik. [Crelle's
              Journal]},
    VOLUME = {398},
      YEAR = {1989},
     PAGES = {130--143},
      ISSN = {0075-4102},
   MRCLASS = {43A75 (22C05)},
  MRNUMBER = {998476},
MRREVIEWER = {A. Kor\'{a}nyi},
       DOI = {10.1515/crll.1989.398.130},
       URL = {https://doi.org/10.1515/crll.1989.398.130},
}

@article {gursky2018,
    AUTHOR = {Gursky, Matthew and Kelleher, Casey Lynn and Streets, Jeffrey},
     TITLE = {A conformally invariant gap theorem in {Y}ang-{M}ills theory},
   JOURNAL = {Comm. Math. Phys.},
  FJOURNAL = {Communications in Mathematical Physics},
    VOLUME = {361},
      YEAR = {2018},
    NUMBER = {3},
     PAGES = {1155--1167},
      ISSN = {0010-3616,1432-0916},
   MRCLASS = {53C07 (53A30 53C21 81T13)},
  MRNUMBER = {3830264},
MRREVIEWER = {Kishore\ Marathe},
       DOI = {10.1007/s00220-017-3070-z},
       URL = {https://doi.org/10.1007/s00220-017-3070-z},
}

@book {atiyahbook,
    AUTHOR = {Atiyah, M. F.},
     TITLE = {Geometry of {Y}ang-{M}ills fields},
 PUBLISHER = {Scuola Normale Superiore, Pisa},
      YEAR = {1979},
     PAGES = {99},
   MRCLASS = {81E10 (14F05 53C05 58G05)},
  MRNUMBER = {554924},
MRREVIEWER = {Andrzej\ Trautman},
}

@article {biquard2013,
    AUTHOR = {Biquard, Olivier},
     TITLE = {D\'esingularisation de m\'etriques d'{E}instein. {I}},
   JOURNAL = {Invent. Math.},
  FJOURNAL = {Inventiones Mathematicae},
    VOLUME = {192},
      YEAR = {2013},
    NUMBER = {1},
     PAGES = {197--252},
      ISSN = {0020-9910,1432-1297},
   MRCLASS = {53C25 (14J28 53C55)},
  MRNUMBER = {3032330},
MRREVIEWER = {Vincent\ Minerbe},
       DOI = {10.1007/s00222-012-0410-7},
       URL = {https://doi.org/10.1007/s00222-012-0410-7},
}

@article {viaclovsky2020,
    AUTHOR = {Morteza, Peyman and Viaclovsky, Jeff A.},
     TITLE = {The {C}alabi metric and desingularization of {E}instein
              orbifolds},
   JOURNAL = {J. Eur. Math. Soc. (JEMS)},
  FJOURNAL = {Journal of the European Mathematical Society (JEMS)},
    VOLUME = {22},
      YEAR = {2020},
    NUMBER = {4},
     PAGES = {1201--1245},
      ISSN = {1435-9855,1435-9863},
   MRCLASS = {53C25 (53C21)},
  MRNUMBER = {4071325},
MRREVIEWER = {Yalong\ Shi},
       DOI = {10.4171/JEMS/943},
       URL = {https://doi.org/10.4171/JEMS/943},
}

@article {quaternion,
    AUTHOR = {Farenick, Douglas R. and Pidkowich, Barbara A. F.},
     TITLE = {The spectral theorem in quaternions},
   JOURNAL = {Linear Algebra Appl.},
  FJOURNAL = {Linear Algebra and its Applications},
    VOLUME = {371},
      YEAR = {2003},
     PAGES = {75--102},
      ISSN = {0024-3795,1873-1856},
   MRCLASS = {15A33 (16K20)},
  MRNUMBER = {1997364},
MRREVIEWER = {Alexander\ E.\ Guterman},
       DOI = {10.1016/S0024-3795(03)00420-8},
       URL = {https://doi.org/10.1016/S0024-3795(03)00420-8},
}
\end{document}